\documentclass[a4paper,10pt]{article}

\usepackage[T1]{fontenc}
\usepackage[utf8]{inputenc}

\usepackage{latexsym}
\usepackage{mathrsfs}
\usepackage{amsfonts,amsmath,amssymb,amsthm}
\usepackage{bm}
\usepackage{stmaryrd}

\usepackage{indentfirst}
\usepackage{verbatim}
\usepackage{microtype}
\usepackage{float}
\usepackage{enumitem}

\usepackage{multirow}
\usepackage{diagbox}
\usepackage{booktabs}
\usepackage{array}
\usepackage{tabularx}
\usepackage[table]{xcolor}

\usepackage{graphicx}
\usepackage{epstopdf}
\usepackage{subcaption}
\usepackage{tikz}
\usepackage{pgfplots}
\usetikzlibrary{fit}
\pgfplotsset{compat=1.18}

\usepackage{pdfpages}
\usepackage{algorithm}
\usepackage{algorithmic}
\usepackage[title]{appendix}

\usepackage[unicode,colorlinks,linkcolor=blue,citecolor=red]{hyperref}

\numberwithin{equation}{section}
\allowdisplaybreaks

\newtheorem{thm}{Theorem}[section]
\newtheorem{lemma}[thm]{Lemma}

\newtheorem{rem}[thm]{Remark}

\newcommand{\safeincludegraphics}[2][]{%
  \IfFileExists{#2}{\includegraphics[#1]{#2}}{%
    \fbox{\parbox[c][0.18\textheight][c]{0.78\linewidth}{\centering
    Placeholder for Figure\\[0.5ex]\texttt{\detokenize{#2}}}}%
  }%
}

\renewcommand{\epsilon}{\varepsilon}
\newcommand{\mat}[1]{\bm{#1}}

\begin{document}

\begin{center}
\Large\bf
Adaptive-Distribution Randomized Neural Networks for PDEs:\\
A Low-Dimensional Distribution-Learning Framework
\end{center}

\begin{center}
{\large You Yang}\footnote{School of Mathematics and Statistics, Xi'an Jiaotong University, Xi'an, Shaanxi 710049, China. Email: {\tt yangyou@stu.xjtu.edu.cn}}
\quad {\rm and} \quad
 {\large Fei Wang}\footnote{School of Mathematics and Statistics \& State Key Laboratory of Multiphase Flow in Power Engineering, Xi'an Jiaotong University, Xi'an, Shaanxi 710049, China. The work of this author was partially supported by the National Natural Science Foundation of China (Grant No.\ 92470115). Email: {\tt feiwang.xjtu@xjtu.edu.cn}}
\end{center}

\medskip

\begin{quote}
{\bf Abstract. } 
Randomized neural networks (RaNNs) are attractive for partial differential equations (PDEs) because they replace expensive end-to-end training with a linear least-squares solve over randomized hidden features. Their practical performance, however, depends strongly on the sampling distribution of the hidden-layer parameters, which is usually chosen heuristically and problem by problem. This distribution sensitivity is a central bottleneck in randomized neural PDE solvers. In this work, we propose Adaptive-Distribution Randomized Neural Networks (AD-RaNN), a framework that promotes randomized feature generation from a fixed heuristic choice to a low-dimensional adaptive optimization problem. Instead of training all hidden weights and biases, AD-RaNN parameterizes the hidden-feature sampling distribution by a low-dimensional vector $\bm p$ and optimizes only $\bm p$, thereby preserving the least-squares structure of RaNNs while reducing manual distribution tuning. The method uses a two-stage strategy: ridge-regularized reduced training for stable distribution-parameter optimization, followed by an unregularized least-squares refit for final solution recovery. We develop two adaptive mechanisms, PDE-Driven Adaptive Distribution (PDAD) and Data-Driven Adaptive Distribution (DDAD), and deploy them in space-time solvers, discrete-time solvers, and operator-learning models. We also incorporate an adaptive layer-growth enhancement for localized structures. For the reduced optimization problem, we establish well-posedness of the reduced objectives, consistency of ridge-regularized minimizers, an efficient gradient formula, and a practical lower-bound estimate for the ridge parameter. Numerical experiments on benchmark problems show that AD-RaNN provides an effective distribution-level adaptation mechanism, reduces reliance on hand-crafted hidden-feature distributions, and achieves strong empirical accuracy.

\end{quote}

{\bf Keywords.} Randomized neural networks, adaptive distribution learning, least-squares methods, operator learning.

\medskip

{\bf Mathematics Subject Classification.} 65N35, 65N21, 68T07.

\medskip

\section{Introduction}\label{sec:intro}

Traditional numerical approaches, including finite element methods, discontinuous Galerkin methods, finite volume methods, finite difference methods, and spectral methods, have long served as standard tools for PDE simulation. These methods are highly successful in many regimes, but they may also face major difficulties in settings involving complex geometries, moving structures, multiscale behavior, or high-dimensional state spaces.  
 In many computational mechanics and engineering applications, moreover, one is not only interested in solving a single well-resolved PDE instance, but also in repeatedly solving families of related problems under varying parameters, source terms, geometries, boundary conditions, or initial data. Such multi-query settings create a demand for solvers that combine efficient linear-algebraic training, mesh flexibility, and robust feature construction. 
In recent years, neural-network-based solvers have emerged as an alternative paradigm for PDE approximation, motivated by the expressive power of neural networks and their mesh-free nature (\cite{cybenko1989Approximation,hornik1991Approximation,pinkus1999Approximation,chen1995Universal,ainsworth2021Galerkin,raissi2019Physics-informed,sirignano2018dgm,wang2024Computing,wang2024Solving,lu2021Learning}).

Among neural PDE solvers, randomized neural networks (RaNNs) are particularly attractive because they avoid expensive end-to-end optimization of all network parameters. In a typical RaNN, the hidden-layer parameters are randomly sampled and then frozen, while only the output-layer coefficients are determined, usually by solving a linear least-squares problem. This structure yields substantial computational savings and has led naturally to a range of efficient PDE solvers built on classical numerical formulations, including Petrov--Galerkin, discontinuous Galerkin, finite-difference, and domain-decomposition-type methods (\cite{dong2021local,shang2023randomized,shang2024Randomized,sun2024local1,sun2024Local2,chen2022bridging,li2025Local,dang2024Adaptive}). 
In this sense, RaNNs occupy an appealing middle ground between traditional numerical methods and fully trained neural architectures: they retain a trial-space viewpoint and a linear-algebraic solution stage, while offering the mesh-free flexibility of neural representations.

However, the practical success of RaNN-based PDE solvers depends critically on a design choice that is often treated heuristically: the sampling distribution of the hidden-layer parameters. If the random weights are too small, the resulting features may be too flat to capture relevant solution structures; if they are too large, the features may become highly oscillatory or ineffective on the sampled domain. In anisotropic, time-dependent, or sharp-interface problems, this sensitivity becomes even more pronounced. As a result, the approximation quality of RaNNs often depends strongly on manually chosen scale parameters or problem-dependent initialization rules, which limits their robustness and reusability in repeated PDE simulations.

Existing attempts to mitigate this issue typically move in several directions.
One direction keeps the hidden representation fixed after construction, but seeks to improve that construction through transferable feature spaces (\cite{zhang2024transferable}) or problem-informed initialization heuristics, such as frequency-based designs (\cite{dang2024Adaptive}).
A second direction introduces outer search procedures, such as differential evolution, to tune distributional hyperparameters (\cite{dong2022Oncomputing}).
A third direction moves closer to fully trained neural networks by optimizing a much larger collection of hidden parameters (\cite{xu2025Subspace}).
The first two directions preserve much of the low-cost least-squares structure, but they either rely on fixed feature constructions or incur problem-dependent search overhead.
The third direction improves flexibility, but at the price of substantially increased optimization complexity.
This leaves a methodological gap between fully fixed and fully trained hidden representations.
The main idea of this work is to fill this gap by promoting random feature generation from a fixed heuristic choice to a low-dimensional adaptive optimization problem. Rather than training all hidden weights and biases, we parameterize the hidden-layer sampling distribution by a low-dimensional vector $\bm p$ and optimize only $\bm p$. In this way, we retain the least-squares structure and computational efficiency of RaNNs, while allowing the random feature family itself to adapt to the PDE, the numerical solution, or the operator-learning task at hand.

In this work, we propose \emph{Adaptive-Distribution Randomized Neural Networks} (AD-RaNN), a framework for PDE solvers and operator-learning models based on \emph{distribution-level adaptation} of randomized features. The central viewpoint is that, for randomized neural solvers, one need not choose between a fully fixed random hidden representation and a fully trained hidden representation. Instead, one may optimize only a small number of distribution parameters that govern how the random hidden features are generated. 
This yields an intermediate adaptation mechanism that is expressive enough to adapt the trial space to the problem, yet remains far cheaper than full hidden-parameter training.

To realize this idea, we develop two complementary adaptive mechanisms. The first, termed \emph{PDE-Driven Adaptive Distribution} (PDAD), learns the distribution parameters directly from the PDE residual least-squares system. The second, termed \emph{Data-Driven Adaptive Distribution} (DDAD), learns them from numerically computed or externally available solution data. These two mechanisms share the same reduced optimization structure and can therefore be embedded into several computational settings. 

The main contributions of this paper are as follows:
\begin{itemize}
    \item \textbf{Adaptive distribution learning for randomized neural PDE solvers.}
    We introduce AD-RaNN, in which the hidden-layer sampling distribution is parameterized by a low-dimensional vector $\bm p$ and adapted through outer-loop optimization, while the output-layer coefficients are still determined by least-squares. This creates a structural middle ground between fully fixed random-feature methods and fully trained neural PDE solvers.

    \item \textbf{A stable reduced optimization strategy and two adaptive mechanisms.}
    We propose a ridge-regularized reduced optimization stage for learning the distribution parameters, followed by an unregularized least-squares refit for final solution recovery. Within this framework, we formulate both PDE-driven and data-driven routes, namely PDAD and DDAD.

    \item \textbf{Deployment across multiple computational regimes.}
    We instantiate the adaptive-distribution idea in unified space-time PDE solvers, discrete-time time-stepping solvers, and RaNN-DeepONet operator-learning models. In addition, we incorporate a local layer-growth enhancement for cases in which global adaptive distribution learning alone is insufficient to resolve strongly localized structures.

    \item \textbf{Analysis of the reduced optimization problem.}
    We establish existence and continuity results for the reduced ridge objective, prove consistency of ridge-regularized minimizers as $\lambda\to 0$, derive an efficient gradient formula via the envelope theorem, and provide a practical lower-bound estimate for the ridge parameter.

    \item \textbf{Extensive empirical validation.}
    Through elliptic, nonlinear, time-dependent, sharp-layer, geometric-flow, high-dimensional, and operator-learning examples, we show that adaptive distribution learning can substantially reduce the sensitivity of RaNN solvers to manually chosen feature distributions and yield strong empirical accuracy across a broad range of problem types.
\end{itemize}

\begin{table}[!ht]
\centering
\small
\caption{Methodological positioning of AD-RaNN among representative randomized neural network solvers. Here ``low'' / ``high'' refer to the dimension or cost of hidden-parameter adaptation relative to the hidden-layer size. The methods are ordered roughly from fully fixed hidden representations to increasingly adaptive hidden-parameter strategies.}
\label{tab:positioning_ADRaNN}
\setlength{\tabcolsep}{5pt}
\renewcommand{\arraystretch}{1.15}
\begin{tabular}{@{}p{2.5cm}p{3.4cm}p{2.3cm}p{2.4cm}p{4.1cm}@{}}
\toprule
Method class 
& Hidden-layer parameters 
& Hidden-parameter optimization 
& Manual distribution tuning 
& Feature reuse across problem settings \\
\midrule

Fixed-distribution methods
& Fixed after random draw
& None
& High
& Possible, but often requires case-by-case retuning \\
\midrule

Designed fixed-feature methods
& Fixed after problem-informed, frequency-informed, or transferable feature design
& None
& Moderate to high
& Moderate; reusable for related PDE tasks, but still sensitive to solution scales. \\
\midrule

{\bf AD-RaNN}
& Randomized, with \emph{distribution-level adaptation}
& Low
& Substantially reduced
& Broad at the framework level; the same adaptation principle applies in space-time, discrete-time, and operator-learning settings\\
\midrule

Search-based parameter selection
& Randomized and fixed after outer search of distribution parameters
& Moderate
& Reduced, but replaced by search cost
& Limited; often requires a new search for each problem regime \\
\midrule

Fully trained hidden-parameter methods
& Trained
& High
& Low
& Broad, but usually requires retraining or substantial fine-tuning \\
\bottomrule
\end{tabular}
\end{table}

 To clarify the methodological position of the proposed framework, it is helpful to distinguish several broad levels of adaptivity for hidden-layer parameters in randomized or neural PDE solvers.
At one extreme are methods in which the hidden parameters are randomly generated and then kept fixed throughout the computation.
A mild refinement of this setting improves the fixed hidden representation through feature-design or initialization strategies, while preserving the same fixed-feature least-squares structure.
A further step is to introduce external search or meta-optimization procedures for selecting favorable distributional hyperparameters.
At the opposite extreme are methods that directly train a large collection of hidden-layer parameters by gradient-based optimization.
Table~\ref{tab:positioning_ADRaNN} summarizes this methodological picture.
In contrast to both fully fixed and fully trained hidden representations, AD-RaNN performs \emph{adaptive distribution learning}: the hidden features remain randomized, but the distribution that generates them is updated through a low-dimensional parameter vector. In this sense, AD-RaNN occupies a structural middle ground between fixed random-feature methods and fully trained hidden-parameter methods.

 \par
The rest of this paper is organized as follows. Section~\ref{sec:Review:RaNN} reviews RaNNs under the space-time and discrete-time frameworks. Section~\ref{sec:AD-RaNN} introduces AD-RaNN with two strategies, PDAD and DDAD. Section~\ref{AD-RaNN-dif-setting} presents the resulting ST/DT variants, the adaptive layer-growth strategy, and further extends the method to operator learning via AD-RaNN-DeepONet. Section~\ref{sec:Theoretical-Analysis} provides the theoretical analysis. Section~\ref{sec:Numerical_experiment} reports numerical results, and Section~\ref{sec:conclusion} concludes the paper.

\section{A Brief Review of Randomized Neural Networks}\label{sec:Review:RaNN}

The architecture of a randomized neural network (RaNN) is similar to that of a fully connected
neural network, except that the hidden-layer weights and biases are randomly initialized and then fixed.
Let $L$ denote the number of hidden layers, and let $m_\ell$ be the number of neurons in the
$\ell$-th hidden layer. The hidden-layer features are defined recursively by
\begin{equation}\label{eq:hidden_features_rann}
\bm{\phi}^{(0)}(\bm{x})=\bm{x},
\qquad
\bm{\phi}^{(\ell)}(\bm{x})
=
\rho^{(\ell)}
\left(
\bm{W}^{(\ell)}\bm{\phi}^{(\ell-1)}(\bm{x})
+
\bm{b}^{(\ell)}
\right),
\quad \ell=1,\ldots,L,
\end{equation}
where $\bm{W}^{(\ell)}$ and $\bm{b}^{(\ell)}$ denote the weight matrix and bias vector of the
$\ell$-th hidden layer, respectively, and $\rho^{(\ell)}$ is the activation function. In a RaNN,
the hidden-layer parameters $\{\bm{W}^{(\ell)},\bm{b}^{(\ell)}\}_{\ell=1}^L$ are randomly initialized
and kept fixed. Thus, the last hidden layer produces a fixed feature vector
\begin{equation}\label{eq:last_hidden_feature_vector}
\bm{\phi}^{(L)}(\bm{x})
=
\big(\phi^{(L)}_1(\bm{x}),\ldots,\phi^{(L)}_{m_L}(\bm{x})\big)^T
\in \mathbb{R}^{m_L}.
\end{equation}
The network output is obtained by linearly combining these fixed features. For a scalar-valued
approximation, the RaNN takes the form
\begin{equation}\label{eq:rann_linear_output}
	u_N(\bm{x})
	=
	\sum_{j=1}^{m_L} w_j^0 \phi^{(L)}_j(\bm{x})
	=
	\bm{w}^0\bm{\phi}^{(L)}(\bm{x}),
\end{equation}
where $\bm{w}^0=[w_1^0,\ldots,w_{m_L}^0]\in\mathbb{R}^{1\times m_L}$ denotes the output-layer
weight vector. The feature functions $\{\phi^{(L)}_j\}_{j=1}^{m_L}$ are fixed after random initialization,
whereas the output-layer coefficients are determined by solving a least-squares problem constructed
from the data or the governing equations.

\begin{figure}[!ht]
	\centering
	\includegraphics[width=0.55\textwidth]{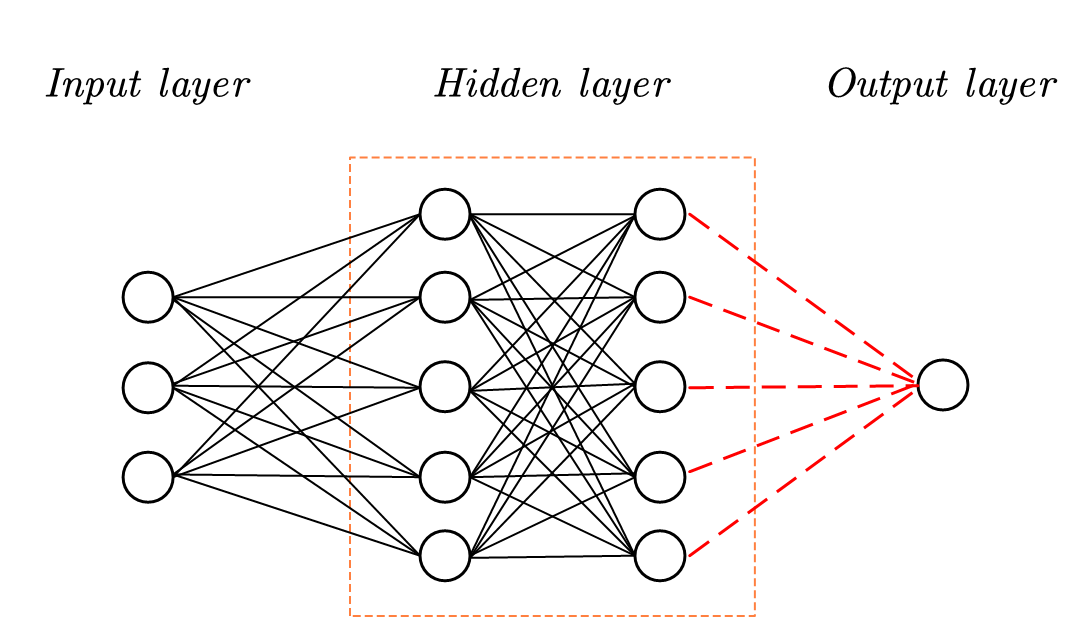}
	\caption{In the architecture of Randomized Neural Networks, black solid lines denote parameters that have been initialized and fixed, while red dashed lines denote parameters to be solved.}
	\label{fig:RNN-ref}
\end{figure}

\subsection{RaNNs with unified space-time framework}
In the unified space-time framework, RaNNs treat temporal and spatial variables on an equal footing, thereby avoiding errors introduced by time discretization. We note that the unified space-time framework is not restricted to
time-dependent problems. For time-independent equations, the formulation
naturally reduces to a purely spatial setting, while the overall procedure
remains unchanged.
 We consider the following problem \eqref{eq:RaNN_e} and approximate its solution using a shallow neural network with only one hidden layer
\begin{equation}\label{eq:RaNN_e}
\begin{aligned}
\mathcal{G}\!\left[u(\bm{x},t)\right]
  &= f(\bm{x},t), \quad (\bm{x},t)\in \Omega\times(0,T],\\
\mathcal{B}\!\left[u(\bm{x},t)\right]
  &= g(\bm{x},t), \quad  (\bm{x},t)\in \partial\Omega\times(0,T],\\
u(\bm{x},0) &= h(\bm{x}), \quad \bm{x}\in\overline{\Omega}.
\end{aligned}
\end{equation}
\par
 Let \(\bm{\phi}=[\phi_i]\in \mathbb{R}^{m_1}\) denote the neurons of the hidden layer, and let \(\bm{w}^{0} = [w_i^{0}]\in \mathbb{R}^{1\times m_1}\) be the vector of weights associated with these neurons (see the red line in Fig.~\ref{fig:RNN-ref}). The approximation takes the form:
\begin{align}\label{eqn:u}  
	u(\bm{x},t)\approx \bm{w}^{0}\bm{\phi},
\end{align}
where $\bm{w}^{0}\bm{\phi}$ denotes matrix multiplication.
By substituting equation \eqref{eqn:u} into equation \eqref{eq:RaNN_e} and evaluating the result at $n$ fixed collocation points, we obtain the least-squares problem:
\begin{equation}
	\left[ \begin{array}{c}
		A_o\\
		\eta_b A_{b}\\
		\eta_i A_{i}\\
	\end{array} \right] \bm{w}=\left[ \begin{array}{c}
		F\\
		\eta_b G_{b}\\
		\eta_i H_{i}\\
	\end{array} \right].
\end{equation}
\par
After solving the penalized least-squares problem (e.g., via a QR- or SVD-based solver) with the prescribed penalty coefficients $\eta_b$ and $\eta_i$, the weights from the last hidden layer to the output layer are replaced by the computed coefficients $\bm w$. 
Throughout this paper, for convenience of analysis, all coefficients from least-squares solutions are represented as column vectors. The weights of the final hidden layer are therefore defined as row vectors, i.e., $\bm{w}^0 = \bm{w}^\top$. Moreover, if the operator $\mathcal{G}\!\left[u(\bm{x},t)\right]$ is nonlinear, it must be linearized via iterative methods such as Picard's iteration or Newton's iteration.

\subsection{RaNNs with discrete-time framework}
Compared with a unified space-time nonlinear solve, the discrete-time formulation allows step-by-step linearization of the nonlinear operator using solutions from previous time steps, thereby requiring only the solution of a sequence of linear subproblems.
\par
In time-dependent partial differential equations, the choice of time discretization
scheme plays a crucial role in determining the stability, accuracy, and computational
efficiency of numerical methods. Common approaches include explicit, implicit, and
semi-implicit schemes~(\cite{frank1997On}),
 each offering distinct advantages and limitations depending on the problem characteristics.
\par
Assuming that the spatial approximation is sufficiently accurate,
the time-dependent problem can be written in the abstract form
\begin{equation}\label{eqn:dt-ode}
u'(t) = \mathcal{F}(t,u(t)) + \mathcal{L}(t,u(t)),
\end{equation}
where $\mathcal{F}$ denotes a nonlinear operator and
$\mathcal{L}$ denotes a linear (or stiff) operator.
\par
The time domain \((0,T]\) is discretized into \(N_t+1\) equally spaced time points
\begin{equation}
	0=t_0  < t_1 = \Delta t< t_2 = 2\Delta t \dots< t_{N_t} = T,
\end{equation}
where \(\Delta t = \frac{T}{N_t}\) is the uniform time step size.
\par
For the numerical 
solution of \eqref{eqn:dt-ode}, we consider a general $k$-step semi-implicit linear multistep scheme of the form
\begin{equation}
\sum_{j=0}^{k} a_j u_{n+1-j}
=
\Delta t \, \mathcal{L}(t_{n+1},u_{n+1})
+
\Delta t \mathcal{F}_{\mathrm{lin}}\!\left(t_{n+1};u_{n+1},u_{n},\dots,u_{n-k+1}\right),
\end{equation}
where $\mathcal{F}_{\mathrm{lin}}$ denotes a linearized approximation of the nonlinear operator,
constructed using extrapolation or coefficient freezing.
A representative choice of such schemes is obtained by combining
BDF time discretization with explicit extrapolation of the nonlinear term,
leading to the classical IMEX-BDF methods.
\par
If the spatial terms are solved by RaNNs, the above equation yields the following least-squares formulation, where the initial condition $u(t_0)=h$ is prescribed.
\begin{equation}\label{eqn:LSP}
	\left[ \begin{array}{c}
		 A_o\\
		\eta_b A_b\\
	\end{array} \right] \bm{w}_i=\left[ \begin{array}{c}
		 F_i\\
		\eta_b G_{b_i}\\
	\end{array} \right],
\end{equation}
where \(\bm{w}_i, F_i, G_{b_i}\) represent the corresponding matrices at the $i$-th step.

\section{Adaptive Distribution of Randomized Neural Networks}\label{sec:AD-RaNN}

Before introducing the concrete constructions, we briefly clarify the technical role of AD-RaNN. The central object is not the optimization of all hidden-layer parameters, but the optimization of a low-dimensional parameter vector $\bm p$ that governs how randomized hidden features are generated. In this way, the hidden representation remains randomized, while the feature family itself becomes adaptive to the problem. The following subsections make this idea concrete through parameterized random feature generation, reduced optimization, and the two adaptive mechanisms PDAD and DDAD.

\subsection{Random weight parameterization and bias construction}
In Randomized Neural Networks, the bias vector $\mat{b}$ can be determined from the domain and weights $\mat{W}\in \mathbb{R}^{m_1 \times d}$, 
\begin{equation}
    \mat{b}=-\left(\bm{W} \odot \bm{B}\right)\bm{1}_{d\times 1},
\end{equation}
where $\bm{B}\in \mathbb{R}^{m_1 \times d}$ is randomly generated from the uniform distribution $\mathcal{U}(\bm{q}_1, \bm{q}_2)$. Here we assume that
$\Omega=\prod_{j=1}^{d}\,[q_{1,j},\,q_{2,j}]\subset \mathbb{R}^{d}$
is a hyper-rectangle, and denote
$\bm{q}_1=\big(q_{1,1},\ldots,q_{1,d}\big)^{\top},
\bm{q}_2=\big(q_{2,1},\ldots,q_{2,d}\big)^{\top}$.
 Moreover, $\odot$ denotes element-wise multiplication, and $\bm{1}_{d\times 1}$ denotes a column vector of ones with dimension $d \times 1$ (right-multiplication by $\bm{1}_{d\times 1}$ realizes column-wise summation of the matrix $\bm{W} \odot \mat{B}$).
\par
Selecting appropriate weights $\bm{W}$ is of paramount importance. Excessively small $\bm{W}$ can impair the network's approximation power, whereas overly large $\bm{W}$ may render many neurons ineffective on the sampled points, thereby reducing the number of effective basis functions. To address this, we introduce a method in this section that automatically determines $\bm{W}$ based on the PDE information or the available data.
\par
For a shallow neural network with only one hidden layer, the weights $\bm{W}$ are usually selected from a specific distribution $\mathcal{H}(\bm{p})$, such as a uniform distribution or a Gaussian distribution. If the random seed is fixed, the selection of weights $\bm{W}$ turns into defining the parameters $\bm{p}$ that are related to these distributions.
The uniform distribution $\mathcal{U}(\bm{q}_1,\bm{q}_2)$
is one of the most common sampling strategies in randomized neural networks. It can be simplified to $\mathcal{U}(-\bm{r},\bm{r})$ with a vector parameter $\bm{r}=[r_1, r_2, \dots, r_d]$, where $r_i$ represents the input scale in the $i$-th dimension. Here $\mathcal{U}(-\bm{r},\bm{r})$ denotes the product uniform distribution with independent components.
\par
Initially, we fix the random seed and select $\bm{W}_0$ from $\mathcal{U}(-\bm{1},\bm{1})$. Therefore, $\bm{W}$ satisfying uniform distribution over the interval $[-\bm{r},\bm{r}]$ can be denoted as 
\begin{equation}
    \bm{W}=\bm{r}\odot \bm{W}_0. 
\end{equation}
\par
As for $\bm{b}$, we also randomly generate a $\bm{B}$, and then $\mat{b}=-\left(\bm{W} \odot \bm{B}\right)\bm{1}_{d\times 1}$. This method significantly reduces the degrees of freedom.

\subsection{Sensitivity with respect to the distribution parameter \texorpdfstring{$\bm p$}{p}}\label{sec:example}

This subsection provides a simple but representative example showing that the approximation quality of RaNNs can be highly sensitive to the distribution parameter $\bm p$, even when all other ingredients are kept fixed.

 We consider a simple linear Poisson equation with Dirichlet boundary conditions
    \begin{equation}\label{eq:poisson}
\begin{cases}
-\Delta u = f(x,y), & (x,y)\in\Omega=(-1,1)^2,\\[2pt]
u(x,y) = h(x,y),  & (x,y)\in\partial\Omega,
\end{cases}
\end{equation}
where the exact solution is $\sin(\pi x)\sin(5\pi y)$.
This solution exhibits markedly different oscillatory behavior in the two coordinate directions. Specifically, the solution changes slowly in the $x$-direction, while it oscillates rapidly in the $y$-direction. In our neural network architecture, this directional anisotropy is controlled through the parameters $\bm{p}=(r_x,r_y)$. To investigate the importance of selecting a suitable $\bm{p}$, we conduct a manual comparison of five distinct choices of $\bm{p}$
\begin{equation}
   (0.3,5),\quad (0.8,8),\quad (0.9,9),\quad (1,10),\quad (3,20),
\end{equation}
\par

Specifically, the activation function is the Gaussian function. In Table~\ref{tab:poisson_exam}, ``time" denotes the total computational time, $m_1$ represents the number of basis functions, and accuracy is evaluated by the relative $\ell_2$ error.
 To further enhance numerical stability, all experiments are implemented in MATLAB, and derivatives are obtained analytically.
As shown in Table~\ref{tab:poisson_exam}, the choice $\bm{p}=(0.3,5)$ yields a
relatively large $\ell_2$ error of $1.17e-05$.
In contrast, the choices $\bm{p}=(0.8,8)$, $\bm{p}=(0.9,9)$, and $\bm{p}=(1,10)$ achieve
significantly higher accuracy, with $\ell_2$ errors of $1.22e-08$, $3.25e-08$ and $1.32e-07$, respectively.
However, overly large values of $\bm{p}$ (e.g., $\left(3,20\right)$) also lead to
degraded performance.
These results indicate that good performance can be achieved as long as $\bm{p}$ lies within a reasonable range. This motivates an adaptive strategy for selecting $\bm{p}$.
\begin{table}[!ht]
\centering
\caption{Comparison of different selected parameters $\bm{p}=(r_x,r_y)$ for equation \eqref{eq:poisson}.}
    \label{tab:poisson_exam}
\setlength{\tabcolsep}{18pt} 
\begin{tabular}{cccc}
\hline
 $\bm{p}=(r_x,r_y)$& $m_1$  & time (s) & $\ell_2$ error \\ \hline
 \multirow{1}{*}{$(0.3,5)$} & 600 & 0.21 & 1.17e-05 \\ 
\multirow{1}{*}{$(0.8,8)$} & 600 &  0.22 & 1.22e-08 \\ 
\multirow{1}{*}{$(0.9,9)$} & 600 &  0.22 & 3.25e-08 \\ 
 \multirow{1}{*}{$(1,10)$}  & 600 & 0.22 & 1.32e-07 \\ 
 \multirow{1}{*}{$(3,20)$}  & 600 & 0.21 & 1.01e-03 \\ 
\hline
\end{tabular}
\end{table}

\subsection{PDE-Driven Adaptive Distribution (PDAD)}
For a linear problem, the least-squares system constructed from the governing PDE together with its initial and boundary conditions can be written in the matrix form
\begin{equation}\label{eq:matrix_lsq}
\begin{bmatrix}
A_o\\
\eta_b A_b\\
\eta_i A_i
\end{bmatrix}\boldsymbol w
=
\begin{bmatrix}
F\\
\eta_b G_{b}\\
\eta_i H_{i}
\end{bmatrix},
\end{equation}
which we abbreviate as
\begin{equation}
U(\boldsymbol p)\boldsymbol w = \tilde{\bm y},
\end{equation}
here \(U(\boldsymbol p)\in\mathbb{R}^{n\times m_1}\) is the coefficient matrix determined by the parameter vector \(\boldsymbol p\), while the sampling points
\(\{(\boldsymbol x_i,t_i)\}_{i=1}^{n}\) in the spatio-temporal domain are fixed in advance.
\par
We consider the reduced objective function obtained by minimizing the residual with respect to \(\boldsymbol w\)
\begin{equation}\label{eq:min_f(p)}
f_u(\boldsymbol p)
=
\min_{\boldsymbol w}\;
\tfrac12\big\|U(\boldsymbol p)\boldsymbol w-\tilde{\bm y}\big\|_2^2 .
\end{equation}

\par
 When $U(\boldsymbol{p})$ has full column rank, the matrix $U(\boldsymbol{p})^\top U(\boldsymbol{p})$ is symmetric positive definite; see Remark~\ref{rem:On-the-full-rank-assumption} for discussion of this assumption in the randomized setting. Consequently, the inner least-squares problem admits a unique minimizer. In practice, however, the matrix \(U(\boldsymbol p)\) can be severely ill-conditioned, or even effectively
rank-deficient in finite-precision arithmetic.
In such cases, the unregularized least-squares solution (e.g., obtained via QR or SVD-based solvers)
may have a large norm and be highly sensitive to perturbations in \(U(\boldsymbol p)\) or in the data
\(\tilde{\bm y}\).
This sensitivity leads to an excessive dynamic range in \(\boldsymbol w_u(\boldsymbol p)\), potential
overfitting, and numerical instability in the outer optimization with respect to \(\boldsymbol p\).
\par
To stabilize the training stage, we introduce ridge regularization with a parameter \(\lambda>0\) and consider
the regularized reduced objective
\begin{equation}\label{eq:min_ridge_regression}
f_\lambda(\boldsymbol p)
=
\min_{\boldsymbol w}\;
\tfrac12\big\|U(\boldsymbol p)\boldsymbol w-\tilde{\bm y}\big\|_2^2
+
\tfrac{\lambda}{2}\|\boldsymbol w\|_2^2 .
\end{equation}
\par
For any fixed \(\boldsymbol p\) and \(\lambda>0\), the inner problem is strongly convex in \(\boldsymbol w\),
since the matrix \(U(\boldsymbol p)^{\top}U(\boldsymbol p)+\lambda I_{m_1}\) is symmetric positive definite with
condition number
\begin{equation}
\kappa
=
\frac{\sigma_{\max}^2+\lambda}{\sigma_{\min}^2+\lambda},
\end{equation}
where \(\sigma_{\max}\) and \(\sigma_{\min}\) denote the largest and smallest singular values of
\(U(\boldsymbol p)\), respectively.
The unique minimizer is given by
\begin{equation}
\boldsymbol w_\lambda(\boldsymbol p)
=
\big(U(\boldsymbol p)^{\top}U(\boldsymbol p)+\lambda I_{m_1}\big)^{-1}
U(\boldsymbol p)^{\top}\tilde{\bm y} .
\end{equation}
\par
The regularization significantly reduces the sensitivity of \(\boldsymbol w_\lambda(\boldsymbol p)\) to
perturbations in \(\boldsymbol p\) and improves numerical robustness, making the reduced objective
\(f_\lambda(\boldsymbol p)\) suitable for gradient-based optimization methods.
Accordingly, the outer optimization problem
\begin{equation}
\min_{\boldsymbol p}\; f_\lambda(\boldsymbol p),
\end{equation}
is optimized using Adam, SGD, or L-BFGS~(\cite{kingma2014adam,robbins1951stochastic,liu1989limited}).
 Once an optimized parameter vector \(\boldsymbol p\) is obtained, the final unregularized least-squares
system is solved using a QR-based method to recover a high-accuracy solution.

\begin{itemize}
\item Ridge-regularized least squares is employed during parameter training to ensure numerical stability,
while an unregularized QR-based solver is used in the final stage to achieve high accuracy.
\item Since the random seed and training points are fixed, the optimization depends solely on the parameter
vector \(\bm{p}\), and only a small number of optimization steps is typically required.
\end{itemize}
\par
Although the ridge-regularized solution is written in closed form as
\(
\big(U(\bm{p})^{\top}U(\bm{p})+\lambda I_{m_1}\big)^{-1}
U(\bm{p})^{\top}\tilde{\bm y},
\)
no explicit matrix inversion is performed. In practice, $\bm{w}_\lambda(\bm{p})$ is computed by solving an equivalent linear system.
We solve this system either via an augmented least-squares formulation solved by QR factorization, or via MATLAB's backslash operator ({mldivide}), which automatically selects a numerically stable matrix factorization.
\par
For nonlinear problems, the linearized system
\(U(\boldsymbol p)\boldsymbol w=\tilde{\bm y}\) obtained from a nonlinear iterative scheme depends on
the current numerical approximation \(\bm{\hat{u}}\).
Consequently, the accuracy of \(\bm{\hat{u}}\) directly influences the optimization of \(\boldsymbol p\), and
repeated nonlinear iterations and parameter updates are generally unavoidable.

\subsection{Data-Driven Adaptive Distribution (DDAD)}
Suppose that some data (e.g., numerical solution, measurement data) are available. 
Our goal is to determine suitable distribution parameters based on this data.
Consider a numerical solution $\bm{\hat{u}}$
\begin{equation}
\bm{\hat{u}}(\bm{x},t) \approx\bm{w}^{0}\, \bm{\phi}(\bm{p}_0,\bm{x},t),
\end{equation}
where $\bm{p}_0$ is a given initial parameter vector and $\bm{w}^0$ is obtained by a least-squares solver.
Although $\bm{\hat{u}}$ may deviate from the exact solution $\bm{u}$, it typically captures sufficient structural
information to guide the selection of improved parameter values.
\par
Let $A(\bm{p})\in\mathbb{R}^{n\times m_1}$ denote the feature matrix formed by evaluating the basis
functions $\bm{\phi}(\bm{p},\bm{x},t)$ at the prescribed spatio-temporal sampling points.
The goal is to update $\bm{p}$ such that the resulting basis functions provide a better approximation
to the numerical solution $\bm{\hat{u}}$.
This leads to the reduced optimization problem:
\begin{equation}
f_u(\bm{p})
=
\min_{\bm{w}}\; \|A(\bm{p})\bm{w}-\bm{\hat{u}}\|_2^2 .
\end{equation}
\par
To improve numerical stability, we incorporate a ridge regularization term with parameter $\lambda>0$ and
consider the regularized objective:
\begin{equation}\label{eq:ridge_numerical}
f_\lambda(\bm{p})
=
\min_{\bm{w}}\;
\tfrac{1}{2}\|A(\bm{p})\bm{w}-\bm{\hat{u}}\|_2^2
+
\tfrac{\lambda}{2}\|\bm{w}\|_2^2 .
\end{equation}
\par
For any fixed $\bm{p}$, the corresponding minimizer is given by:
\begin{equation}
\bm{w}_\lambda{(\bm{p})}
=
\big(A(\bm{p})^{\top}A(\bm{p})+\lambda I_{m_1}\big)^{-1}
A(\bm{p})^{\top}\bm{\hat{u}}.
\end{equation}
 As in the PDAD case, this expression is used only for analysis and algorithm design; in practical computation, the corresponding linear system is solved directly rather than by forming an explicit inverse.

\par
This property also enables the use of gradient-based optimization algorithms such as Adam to determine a suitable parameter vector.
\par
For any numerically computed solution $\hat{u}_i$ with sufficient accuracy, an appropriate parameter
vector $\bm{p}_i$ can be obtained by minimizing problem~\eqref{eq:ridge_numerical} with respect to $\bm{p}$.
The resulting procedure, referred to as the \emph{Data-Driven Adaptive Distribution} (DDAD),
is particularly effective for problems in which data are naturally available, such as
nonlinear equations solved by iterative schemes, time-dependent PDEs with discrete time-stepping, and data-driven DeepONet.
\par
For convenience, we use the abstract form
\begin{equation}
M(\bm p)\bm w=\bm y,
\end{equation}
where
\begin{equation}
M(\bm p)=
\begin{cases}
U(\bm p), & \text{(PDAD)},\\
A(\bm p), & \text{(DDAD)},
\end{cases}
\qquad
\bm y=
\begin{cases}
\tilde{\bm y}, & \text{(PDAD)},\\
\hat{\bm u}, & \text{(DDAD)}.
\end{cases}
\end{equation}

\begin{algorithm}[!ht]
\caption{Adaptive Distribution (PDAD/DDAD)}
\label{alg:awvp-ridge}
\begin{algorithmic}[1]
\STATE \textbf{Initialization.}
Choose initial distribution parameters $\bm p_0$, training points $\{(\bm x_i,t_i)\}_{i=1}^n$,
a fixed random seed for generating the basic weights $\bm W_0$ and bias parameters $\bm B$,
and a ridge parameter $\lambda>0$.

\FOR{$k=0,1,\dots,N-1$}
  \STATE 
  Build $\bm W(\bm p_k)$ and biases $\bm b(\bm p_k)$ from $\bm W_0,\bm B$ and $\bm p_k$, and form
  \[
    (\bm M(\bm p_k),\bm y)=
    \begin{cases}
      (\bm U(\bm p_k),\,\tilde{\bm y}), & \text{(PDAD)},\\
      (\bm A(\bm p_k),\,\hat{\bm u}), & \text{(DDAD)}.
    \end{cases}
  \]

  \STATE 
  Compute
  $
    \bm w_\lambda
    \gets \arg\min_{\bm w}
    \frac12\|\bm M(\bm p_k)\bm w-\bm y\|_2^2+\frac{\lambda}{2}\|\bm w\|_2^2.
  $

  \STATE 
  Set
  $
    f_\lambda(\bm p_k)
    = \frac12\|\bm M(\bm p_k)\bm w_\lambda-\bm y\|_2^2
     +\frac{\lambda}{2}\|\bm w_\lambda\|_2^2 .
  $

  \STATE Update $\bm p_{k+1}$ by Adam using $\nabla_{\bm p} f_\lambda(\bm p_k)$.
  
\STATE Clip $\bm p_{k+1}$ to $[\bm p_{\min},\bm p_{\max}]$ (component-wise).
\ENDFOR

\STATE Set $\hat{\bm p}\gets \bm p_{N}$.

\STATE \textbf{Final unregularized solve.}
With $\hat{\bm p}$, form $(\bm M(\hat{\bm p}),\bm y)$ as above and solve
\[
\min_{\bm w}\|\bm M(\hat{\bm p})\bm w-\bm y\|_2
\]
by a QR-based solver to obtain the final $\bm w$.
\end{algorithmic}
\end{algorithm}

\begin{rem}\label{rem3.1}
A key computational feature of AD-RaNN is that the adaptive part of the hidden-feature generation is parameterized only by the low-dimensional distribution parameter vector $\bm{p}\in\mathbb{R}^{d}$. Thus, the number of trainable variables in the outer optimization is typically $\mathcal{O}(d)$, which is far smaller than the number of hidden-layer parameters in fully trained neural solvers. Consequently, only a small number of optimization steps is often sufficient in practice. In particular, when positivity of the scaling parameter is required, one may set
$
\bm p=e^{\bm s},
$
and optimize with respect to $\bm s$ instead. This automatically preserves positivity and often improves numerical robustness when the components of $\bm p$ differ significantly in magnitude.
\end{rem}

\begin{rem}\label{rem3.2}
    The outer optimization over the distribution parameter $\bm{p}$ is generally nonconvex due to the nonlinear dependence of the feature matrix on $\bm{p}$. However, $\bm{p}$ is low-dimensional and the ridge-regularized inner problem is strongly convex with a unique solution, which facilitates efficient gradient-based optimization in practice.
\end{rem}
\begin{rem}\label{rem3.4}
To enhance the expressiveness of the random feature representation, the Gaussian
parameterization of the weight matrix can be extended from a single Gaussian
distribution to multiple Gaussian components. Specifically, fixing a random seed,
we first draw a reference matrix $\bm{W}_0 \sim \mathcal{N}(0, I)$. A Gaussian
parameterization with mean $\bm{\mu}$ and variance $\bm{\sigma}^2$ can then be
expressed as
\[
\bm{W} = \bm{\sigma} \odot \bm{W}_0 + \bm{\mu}.
\]
To further enrich the feature space, multiple Gaussian components can be
concatenated as
\[
\bm{W} =
\begin{bmatrix}
\bm{\sigma}^1 \odot \bm{W}_0 + \bm{\mu}^1 \\
\bm{\sigma}^2 \odot \bm{W}_0 + \bm{\mu}^2 \\
\vdots \\
\bm{\sigma}^h \odot \bm{W}_0 + \bm{\mu}^h
\end{bmatrix},
\]
where $\bm{p}=\{\bm{\mu}^i,\bm{\sigma}^i\}$ are trainable parameters.
\end{rem}

\section{AD-RaNN under Different Computational Frameworks}
\label{AD-RaNN-dif-setting}

\subsection{Adaptive Distribution Framework with a Unified Space-Time Formulation}\label{sec:Aw-ST}
This adaptive distribution framework applies to both time-dependent and
time-independent partial differential equations.
When temporal variables are present, they are treated on an equal footing
with spatial variables within a unified space-time formulation.
For time-independent problems, the temporal dimension is absent, and the same
algorithm naturally reduces to a purely spatial setting without any
modification of the overall procedure.

\par
Linear equations can be solved efficiently, whereas nonlinear equations require nonlinear iterative methods (e.g., Picard's iteration or Newton's iteration) to handle the nonlinear terms. When a predefined criterion is met, we update the parameters using Algorithm \ref{alg:awvp-ridge} and re-solve the resulting problem. We repeat this process until the final criterion is satisfied or the total number of iterations exceeds \(K_{\max}\). For time-dependent equations, the space-time frameworks with PDAD and DDAD are named PDAD-ST and DDAD-ST, respectively. 
\par
\paragraph{Stopping criterion.}
Let $E_k$ denote the root-mean-square residual (RMSE) of the PDE operator at iteration $k$ and define $\Delta_k = \frac{\lvert E_k - E_{k-1} \rvert}{\lvert E_{k-1} \rvert}$.
\begin{equation}
    E_k=\sqrt{\frac{1}{n}\sum_{j=1}^{n}\Big(\mathcal{G}(u_k)-f\Big)^2}\,.
\end{equation}
\par
For a predefined $\tau$, we declare stagnation if equation \eqref{eq:dk_tau} is satisfied, and we use this criterion in the following Algorithm \ref{alg:outer-inner_ST}
\begin{equation}\label{eq:dk_tau}
  \Delta_k < \tau.
\end{equation}
\par
For problems with a large number of neural basis functions, a two-stage strategy can be employed to accelerate the training process. In the training stage, only $m_\lambda$ neural basis functions ($m_\lambda < m_1$) and fewer training (collocation) points are used to reduce the computational cost. In the solving stage, the final solution is reconstructed by refitting the output layer using the full set of $m_1$ neural basis functions on the full set of collocation points, thereby preserving the expressive capacity of the model.

\begin{algorithm}[!ht]
\caption{Adaptive Distribution Framework with a Unified Space-Time Formulation}
\label{alg:outer-inner_ST}
\begin{algorithmic}[1]
\REQUIRE Initial parameters $\bm p_0$; thresholds $\tau_i,\tau_o$; initial residual $E_0, E_{\mathrm{out}}$;
maximum number of outer iterations $K_{\max}$.

\STATE Set $k\gets 1$, $\bm p\gets \bm p_0$.
\STATE \textbf{Initialization:} Solve the (linearized) least-squares problem with $\bm p$ (QR) to obtain $E_1$ and $\bm u_1$.

\WHILE{$k < K_{\max}$}
  \STATE Compute $\Delta_k \gets \dfrac{|E_k-E_{k-1}|}{|E_{k-1}|}$.
  \IF{$\Delta_k < \tau_i$}
    \STATE $E_{\mathrm{out}} \gets E_k$.
    \STATE \textbf{Parameter update:} $\bm p \gets \text{Algorithm}~\ref{alg:awvp-ridge}(\bm p)$.
  \ENDIF

  \STATE Perform one nonlinear iteration and linear solve (with current $\bm p$, QR) to obtain $\bm u_{k+1}$.
  \STATE Compute the new residual $E_{k+1}$.
  \STATE Compute $\Delta_{\mathrm{out}} \gets \dfrac{|E_{k+1}-E_{\mathrm{out}}|}{|E_{\mathrm{out}}|}$.
  \STATE $k\gets k+1$.

  \IF{$\Delta_{\mathrm{out}} < \tau_o$}
    \STATE \textbf{break} \COMMENT{global stagnation across outer cycles}
  \ENDIF
\ENDWHILE
\end{algorithmic}
\end{algorithm}

\subsection{Adaptive Distribution in a Discrete-Time Setting}\label{sec:Aw-DT}

For time-dependent PDEs with large temporal scales, the optimal distribution
parameters $\bm{p}$ may vary significantly across time steps. Consequently,
a single fixed set of random parameters may be insufficient to represent the
solution accurately throughout the entire time domain. In this section, we
adaptively determine a parameter vector $\bm{p}_i$ at each time step $t_i$ (or
every few steps).
\par
Due to the linearization introduced by the time-discretization scheme, a numerical solution
$\bm{\hat{u}}_i$ is obtained at every time step. Hence, the parameter vector
$\bm{p}_i$ can be updated directly using either PDAD or DDAD. We refer to the
resulting discrete-time versions of these methods as PDAD-DT and DDAD-DT. Moreover, $E_k$ (see Section~\ref{sec:Aw-ST}) serves as an indicator for determining whether
$\bm{p}$ should be updated at the current step.
\par

Specifically, an update is triggered when the positive residual increase
\begin{equation}\label{eq:DT-judge}
 \Delta_k^{+}
 =
 \frac{(E_k-E_l)_+}{\max\{E_l,\varepsilon_E\}},
 \qquad
 (a)_+ := \max\{a,0\},
\end{equation}
exceeds a prescribed threshold~$\tau_k$, where $E_l$ denotes a reference
residual. In this case, the distribution parameter $\bm p$ is updated and the
current step is re-solved once using the updated randomized features. To avoid
excessive re-solves and possible cycling, at most one parameter update is allowed
within each time step (remaining localized residuals may be further reduced by
the optional layer-growth refinement).

\par
Choosing the reference residual \(E_l\) is important, since it controls the
sensitivity of the update criterion.
If \(E_l\) is not comparable to the current residual \(E_k\), the criterion may
become either too sensitive or too insensitive. To keep \(E_l\) on the same scale
as the current residual, we update it adaptively: if
\begin{equation}\label{eq:DT-reference}
  \Delta_l =
  \frac{|E_k-E_l|}
  {\max\{\min(|E_l|,|E_k|),\varepsilon_E\}}
  > \tau_l .
\end{equation}
then $E_l$ is replaced by $E_k$. Furthermore, if satisfactory accuracy is not achieved after applying distribution-parameter adaptation, a layer-growth strategy can be incorporated into the iteration.

\begin{algorithm}[!ht]
\caption{Adaptive Distribution in a Discrete-Time Setting}
\label{alg:outer-inner_DT}
\begin{algorithmic}[1]
\REQUIRE Initial parameters $\bm p_0$; thresholds $\tau_k,\tau_l,\varepsilon_E$; number of time steps $N_t$; residual $E_0, E_l$.
\STATE Set $k\gets 1$, $\bm p\gets \bm p_0$, $\hat{\bm u}\gets \bm u_0$, $break\_out\gets 0$.
\WHILE{$k \le N_t$}
  \STATE Solve the LSQ problem (QR) at time step $k$.
  \STATE Compute $E_k$ and $\Delta_k^{+}$.
  \IF{$\Delta_k^{+}<\tau_k$ \OR $break\_out=1$}
    \STATE Update $\hat{\bm u}$.
    \STATE $k\gets k+1$.
    \STATE $break\_out\gets 0$.
  \ELSE
    \STATE Update $\bm p \gets \text{Algorithm}~\ref{alg:awvp-ridge} (\bm p)$.
    \STATE $break\_out\gets 1$. \COMMENT{avoid infinite loops}
  \ENDIF
  \STATE Compute $\Delta_l \gets \frac{|E_k-E_l|}
  {\max\{\min(|E_l|,|E_k|),\varepsilon_E\}}$.
  \IF{$\Delta_l>\tau_l$}
    \STATE $E_l\gets E_k$.
  \ENDIF
\ENDWHILE
\end{algorithmic}
\end{algorithm}

\subsection{The adaptive layer growth strategy for local resolution}

The adaptive-distribution mechanism in AD-RaNN is primarily a \emph{global} feature-selection strategy: it adjusts the sampling distribution of randomized hidden features so that the resulting trial space better captures the overall structure of the target solution. However, for problems with sharp layers, narrow transition regions, or strongly localized residual concentrations, global distribution adaptation alone may still be insufficient. In such cases, we employ the adaptive layer-growth procedure of \cite{dang2024Adaptive} as a \emph{local enhancement module}. In the present work, the local basis centers are still selected from collocation points with large residuals, while the local distribution parameter is determined adaptively through the same reduced optimization principle used in AD-RaNN. In this way, the original trial space is enriched by a second family of localized basis functions, providing a residual-driven local correction mechanism on top of the primary global distribution adaptation.

\begin{figure}[!ht]
	\centering
\includegraphics[width=0.7\textwidth]{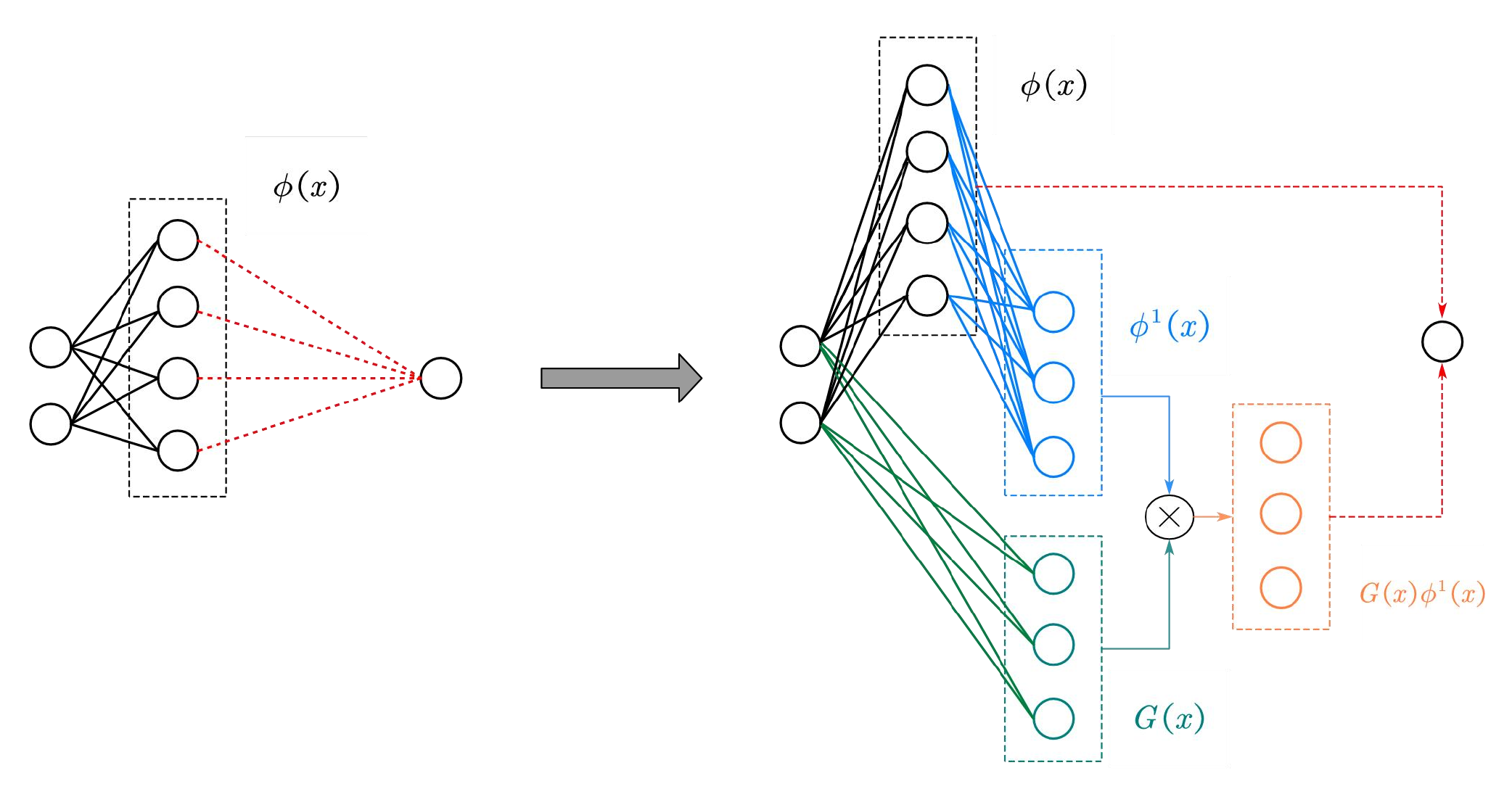}
	\caption{Two-layer RaNN with local basis functions.}
	\label{fig:RNN-two-layer}
\end{figure}
\par
Starting from the collocation points with the largest residuals, we add local basis functions (constructed by the neurons of the new layer) around those points to capture local errors. By combining the original neurons with the newly added ones, the resulting RaNN space is enriched and often yields superior results. The RaNN space after layer growth is defined as
\begin{equation}\label{eq:Vrho1}
\begin{split}
\mathcal{V}_{\rho,1}^{\bm{W},\bm{b}}(D)
=
\Big\{
z(\bm{x})
&=
\bm{w}^{(1)} \bm{\phi}^1(\bm{x})
+
\bm{w}^{(2)} \rho_{2}\!\left(
\bm{W}^{(1)}\bm{\phi}^1(\bm{x})+\bm{b}^{(1)}
\right)\, :  \\
&\quad \bm{x}\in D\subset\mathbb{R}^{d},\;
\forall\,\bm{w}^{(1)}\in\mathbb{R}^{1 \times m_{1}},\;
\forall\,\bm{w}^{(2)}\in\mathbb{R}^{1 \times m_{2}}
\Big\}.
\end{split}
\end{equation}
where $\bm{\phi}^1(\bm{x}) =
\rho_1\left(\bm{W}^{(0)}\bm{x}+\bm{b}^{(0)}\right)$ represents a set of basis functions,
with $\bm{W}^{(0)}$ and $\bm{b}^{(0)}$ determined in the previous step.
The objective is to determine suitable $\bm{W}^{(1)}$ and $\bm{b}^{(1)}$.
\par
After a numerical solution \(\hat{u}_0=\bm{w}^0\bm{\phi}^1\) is obtained using the first layer,
the pointwise residual indicators are computed at the training points, and the top
$m_2$ points
\(X_{\mathrm{err}}=(\bm{x}_1,\ldots,\bm{x}_{m_2})\in\mathbb{R}^{d\times m_2}\)
with the largest errors are recorded. Each point corresponds to a new neuron, and the weights and biases of the second layer are constructed as follows, according to \cite{dang2024Adaptive}.
\begin{equation}
\begin{gathered}
\bm{W}^{(1)} = \bm{H}\,\bm{w}^{0}, \qquad
\bm{b}^{(1)} = -\,\bm{H} \odot [\hat{u}_0(X_{\mathrm{err}})]^{\top},\\
\bm{H} = \sqrt{\bm{H}_{0}\odot \bm{H}_{0}}\,\bm{1}_{d\times 1},\;
\bm{H}_{0} = (\bm{h}_{1}^\top,\ldots,\bm{h}_{m_{2}}^\top)^{\top}\in\mathbb{R}^{m_2\times d},\;
\bm{h}_{i}\sim\mathcal{H}(\bm{p}^{(1)}),\;
\bm{h}_{i}\in\mathbb{R}^{1\times d}.
\end{gathered}
\end{equation}
where $\sqrt{\cdot}$ denotes the element-wise square root.
The activation functions generated by the second layer are first written as
\begin{equation}
\widetilde{\psi}_j(\bm{x})
=
\rho_2\!\left(
\bm{W}^{(1)}_{j}\bm{\phi}^1(\bm{x})+b_j^{(1)}
\right),
\qquad j=1,\ldots,m_2 .
\end{equation}
To enhance locality, we further multiply each second-layer basis function by a
Gaussian localization factor:
\begin{equation}
\begin{gathered}
\psi_j(\bm{x}) = G_j(\bm{x})\widetilde{\psi}_j(\bm{x}),\\
G_j(\bm{x}) =
\exp \left(
- \frac{
\left\| \bm{h}_j \odot (\bm{x}-\bm{x}_j)^{\top} \right\|_2^{2}
}{2}
\right),
\qquad j=1,\ldots,m_2 .
\end{gathered}
\end{equation}

Here $\{\bm{x}_j\}_{j=1}^{m_2}$ is the set of selected large-residual collocation
points, and $G_j(\bm{x})$ is a radial localization function centered at
$\bm{x}_j$. Thus, $\widetilde{\psi}_j$ provides the nonlinear second-layer feature,
while $G_j$ restricts its effective support to a neighborhood of the corresponding
large-error point.
 Once Algorithm~\ref{alg:outer-inner_ST} has been completed without reaching the desired accuracy, we further invoke the layer-growth strategy.
In contrast to manually tuned local basis constructions, the local distribution parameter $\bm p^{(1)}$ is selected adaptively through Algorithm~\ref{alg:awvp-ridge}.
In the DDAD framework, this is implemented by solving a ridge-regularized least-squares problem using the fixed first-layer features together with the newly added local features.
From this perspective, the layer-growth strategy serves as a secondary local refinement mechanism built on top of the primary adaptive-distribution framework.

\subsection{Adaptive distribution RaNN-DeepONet}\label{sec:AD-RaNN-DeepONet}

The preceding sections focus on PDE solvers in which adaptive distribution learning is driven by residual systems or numerically generated solution data. We now show that the same underlying principle extends beyond direct PDE solution and can also be used in operator learning. The purpose of this subsection is therefore not to introduce a disconnected additional model, but to demonstrate that \emph{distribution-level adaptation of randomized hidden features} is a reusable mechanism that transfers naturally from PDE solvers to randomized operator-learning architectures.

Deep operator networks (DeepONets) aim to learn a nonlinear solution operator
\(\mathcal{G}: a \mapsto u\), where \(a\) denotes an input function (e.g., a source term, an initial condition,
or a coefficient field) and \(u(\cdot)\) is the corresponding solution field.
In the vanilla DeepONet, the input function \(a\) is discretized at \(k\) sensors and represented by a vector
\(\bm a\in\mathbb{R}^k\), while the trunk net takes the coordinate \(\bm z\in\mathbb{R}^{d_z}\) as input.
(For spatio-temporal problems, one may take \(\bm z=(\bm{x},t)\).)
The DeepONet output is given by the dot product of the branch and trunk features
\begin{equation}\label{eq:deeponet}
\mathcal{G}(\bm a)(\bm z)=\sum_{i=1}^{{m_t}} \beta_i(\bm a)\, \tau_i(\bm z),
\end{equation}
where \(\beta(\bm a)=(\beta_1,\dots,\beta_{m_t})^\top\in\mathbb{R}^{m_t}\) and \(\tau(\bm z)=(\tau_1,\dots,\tau_{m_t})^\top\in\mathbb{R}^{m_t}\)
are the outputs of the branch and trunk networks, respectively.

\subsubsection{RaNN-DeepONet}

 To reduce the optimization cost of training deep networks, RaNN-DeepONet replaces the branch net by a randomized neural network (RaNN) and uses a fixed randomized trunk feature map (\cite{jiang2026deeponet}).
Concretely, the branch net is a
single hidden layer RaNN
\begin{equation}\label{eq:branch_rann}
\beta(\bm a)=W_b^{2}\,\rho(W_b^{1}\bm a + \bm b_b^{1}),\qquad \beta(\bm a)\in\mathbb{R}^{m_t},
\end{equation}
where \(W_b^{1}\in\mathbb{R}^{m_{b}\times k}\) and \(\bm b_b^{1}\in\mathbb{R}^{m_{b}}\) are randomly initialized and fixed,
and \(W_b^{2}\in\mathbb{R}^{m_t\times m_{b}}\) is the only matrix that needs to be solved.
The trunk net is taken as a fixed feature map
\begin{equation}\label{eq:trunk_fixed}
\tau(\bm z)=\rho(W_t^{1}\bm z+\bm b_t^{1}),\qquad \tau(\bm z)\in\mathbb{R}^{m_t},
\end{equation}
where \(W_t^{1}\in\mathbb{R}^{m_t\times d_z}\) and \(\bm b_t^{1}\in\mathbb{R}^{m_t}\) are randomly initialized and fixed.

With \eqref{eq:deeponet}-\eqref{eq:trunk_fixed}, the RaNN-DeepONet output can be rewritten as a linear model
with respect to the entries of \(W_b^2\).
Let \(\phi_j(\bm a)\) denote the \(j\)-th hidden neuron output in \(\rho(W_b^1\bm a+\bm b_b^1)\),
and let \(\alpha_{ij}\) denote the \((i,j)\)-th entry of \(W_b^2\). Then
\begin{equation}\label{eq:rann_deeponet_linear}
\mathcal{G}(\bm a)(\bm z)=\sum_{i=1}^{m_t} \beta_i(\bm a)\,\tau_i(\bm z)
=\sum_{i=1}^{{m_t}}\sum_{j=1}^{m_{b}}\alpha_{ij}\, \phi_j(\bm a)\, \tau_i(\bm z).
\end{equation}
\par
Given selected pairs \(\{\bm a^{(n)}, (\bm z_j^{(n)},u^{(n)}(\bm z_j^{(n)}))\}_{j=1}^{q}\) for \(n=1,\dots,N\),
the computation of RaNN-DeepONet reduces to a linear least-squares problem for \(\alpha=\{\alpha_{ij}\}\)
\begin{equation}\label{eq:lsq_rann_deeponet}
\min_{\alpha}\; \frac{1}{Nq}\sum_{n=1}^{N}\sum_{j=1}^{q}
\Big(\mathcal{G}(\bm a^{(n)})(\bm z_j^{(n)})-u^{(n)}(\bm z_j^{(n)})\Big)^2.
\end{equation}
\par
The architecture of RaNN-DeepONet is illustrated in Fig. \ref{fig:RaNN-DeepONet}, where the {black solid lines} represent the fixed random mapping, and the {red dashed lines} highlight the learnable parameters solved via least-squares.

\begin{figure}[!ht]
	\centering
    \includegraphics[width=0.7\textwidth]{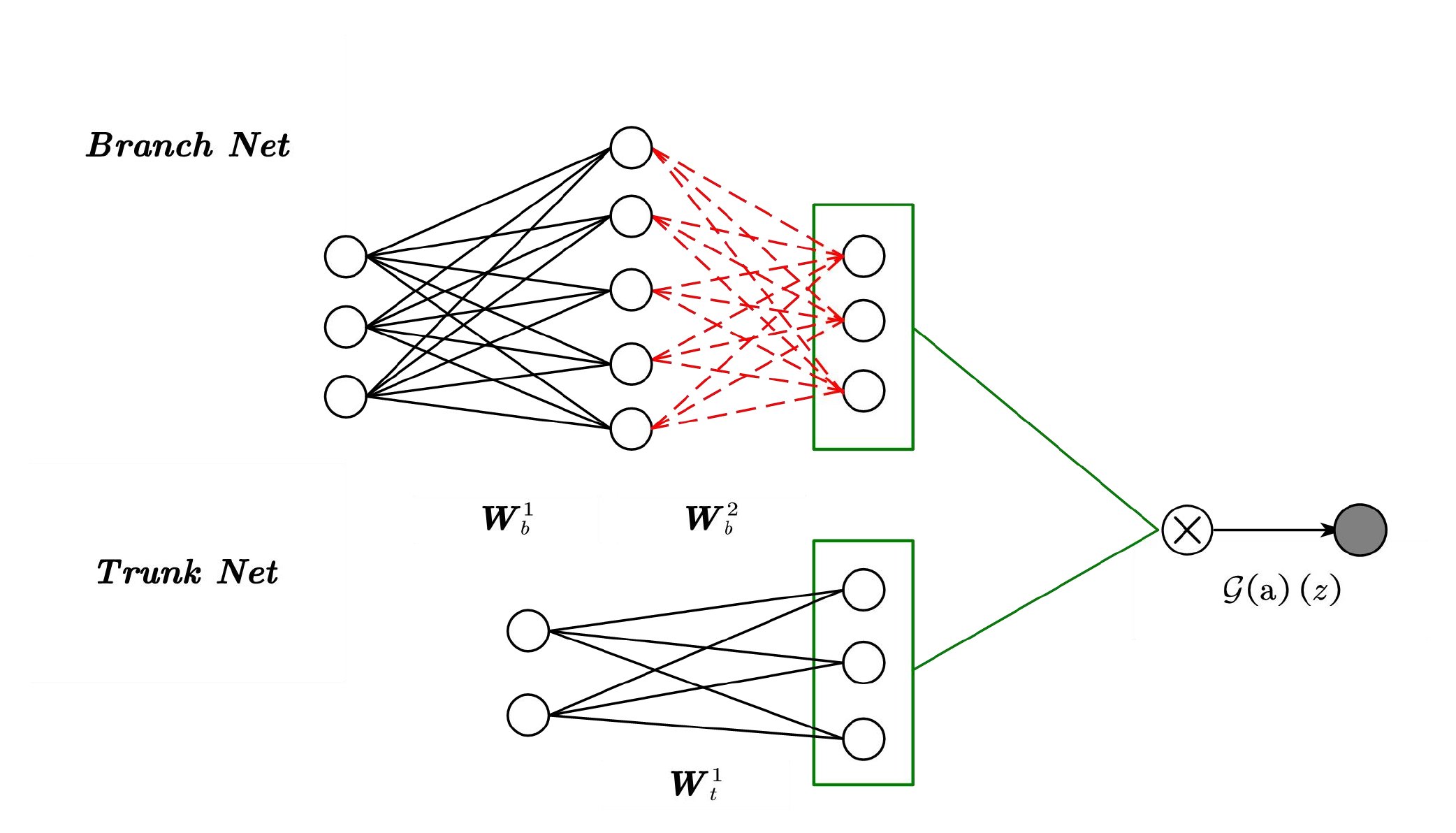}
	\caption{Architecture of RaNN-DeepONet.}
	\label{fig:RaNN-DeepONet}
\end{figure}

\subsubsection{AD-RaNN-DeepONet}

The performance of RaNN-DeepONet depends strongly on the random distributions used to generate the branch and trunk hidden features. This is precisely the same structural issue encountered earlier for randomized neural PDE solvers: once the hidden representation is randomized and frozen, the quality of the approximation is largely determined by how the random feature family is distributed. We therefore apply the same AD-RaNN principle here and replace manually chosen branch/trunk feature distributions by a low-dimensional adaptive distribution parameterization.

Following the adaptive-distribution strategy in AD-RaNN,
we sample
\begin{equation}\label{eq:param_dist}
W_b^{1}\sim U(-r_b,r_b),\qquad W_t^{1}\sim U(-\bm{r}_t,\bm{r}_t),
\end{equation}
and collect the distribution parameters as
\begin{equation}\label{eq:p_rst}
\bm p=[r_b,r_{t_1},r_{t_2},\dots,r_{t_{d_z}}].
\end{equation}
The adaptive distribution RaNN-DeepONet (AD-RaNN-DeepONet) optimizes \(\bm p\) to improve approximation quality. 
After updating \(\bm p\), the output-layer
least-squares problem \eqref{eq:lsq_rann_deeponet} is solved by an unregularized least-squares method to obtain the optimal \(W_b^2\).
Moreover, when only limited supervised data are available while the governing PDE is known, one may incorporate physics-informed constraints and obtain a corresponding AD-PI-RaNN-DeepONet variant. Since this variant follows the same distribution-learning principle, we include it only as an additional demonstration and do not discuss its training mechanism in detail here.
This extension highlights that the contribution of AD-RaNN is not tied to a particular PDE discretization or residual formulation. Rather, its main value lies in a transferable distribution-learning principle for randomized hidden representations, which can be reused in both direct PDE solvers and operator-learning models.

\begin{rem}\label{rem4.1}
 Global basis functions often struggle to approximate solutions with sharp or nearly discontinuous behavior. To address this, we can construct a local randomized neural network (LRaNN), in which the computational domain is partitioned into several subdomains, each equipped with an independent neural network. Each network is then able to capture distinct local features, and the above Algorithm~\ref{alg:outer-inner_ST} can be applied independently to each subnetwork.
\end{rem}
\begin{rem}\label{rem4.2}
For AD-RaNN, a suitable initialization \(\bm{p}_0\) is important. In particular, the Frequency-Based Parameter Initialization Strategy \cite{dang2024Adaptive} can be used. In addition, one may employ a coarse-grid search with a small number of collocation points and basis functions to obtain a suitable initial guess for \(\bm p\), which is then used as \(\bm p_0\).
\end{rem}

\section{Analysis of the Reduced Optimization Problem}
\label{sec:Theoretical-Analysis}

This section gives a basic analysis of the reduced optimization problem
arising in AD-RaNN. The purpose is not to provide an a priori error estimate
for the PDE approximation, but to justify the well-posedness of the reduced
distribution-parameter training problem, the consistency of ridge-regularized
training as the ridge parameter tends to zero, and the gradient formula used in
the outer optimization.

Let $\mathcal P\subset\mathbb R^d$ be the admissible set of distribution
parameters. For $\bm p\in\mathcal P$, let
\[
M(\bm p)\in\mathbb R^{n\times m_1},\qquad \bm y\in\mathbb R^n,
\]
denote the reduced least-squares matrix and the corresponding right-hand side,
where $n$ is the number of sampled equations or data points and $m_1$ is the
number of hidden features. In the usual randomized neural network collocation setting considered here, we
take $n\ge m_1$, so that the reduced systems are square or overdetermined,
and they are typically overdetermined in the numerical experiments.
Here $M(\bm p)=U(\bm p)$ in the PDE-driven case and $M(\bm p)=A(\bm p)$ in the
data-driven case. We define the unregularized and ridge-regularized objectives by
\begin{equation}\label{eq:unregularized_objective_new}
J_u(\bm p,\bm w)
:=
\frac12\|M(\bm p)\bm w-\bm y\|_2^2,
\qquad
f_u(\bm p)
:=
\min_{\bm w\in\mathbb R^{m_1}}J_u(\bm p,\bm w),
\end{equation}
and, for $\lambda>0$,
\begin{equation}\label{eq:ridge_objective_new}
J_\lambda(\bm p,\bm w)
:=
\frac12\|M(\bm p)\bm w-\bm y\|_2^2
+
\frac{\lambda}{2}\|\bm w\|_2^2,
\qquad
f_\lambda(\bm p)
:=
\min_{\bm w\in\mathbb R^{m_1}}J_\lambda(\bm p,\bm w).
\end{equation}

\paragraph{Standing assumptions.}
We use the following assumptions.

\begin{itemize}
\item \textbf{(A1) Compact admissible set.}
The admissible parameter set $\mathcal P\subset\mathbb R^d$ is compact.

\item \textbf{(A2) Continuity.}
The matrix-valued map
\[
\bm p\mapsto M(\bm p)
\]
is continuous from $\mathcal P$ to $\mathbb R^{n\times m_1}$.

\item \textbf{(A3) Differentiability.}
For the gradient formula, we assume that there exists an open set
$\mathcal O\subset\mathbb R^d$ with $\mathcal P\subset\mathcal O$ such that
\[
M\in C^1(\mathcal O;\mathbb R^{n\times m_1}).
\]

\item \textbf{(A4) Uniform full column rank.}
For the unregularized consistency result, we assume that
\[
\operatorname{rank}M(\bm p)=m_1,\qquad \forall \bm p\in\mathcal P.
\]
Equivalently, since $\mathcal P$ is compact and $M$ is continuous, there exists
a constant $\sigma_*>0$ such that
\[
\sigma_{\min}(M(\bm p))\ge \sigma_*,
\qquad \forall \bm p\in\mathcal P.
\]
\end{itemize}

Assumption \textup{(A4)} is not needed for the ridge-regularized inner problem.
It is only needed when the unregularized least-squares minimizer is required to
be unique and uniformly bounded with respect to $\bm p$.

\subsection{Well-posedness of the reduced objectives}

\begin{thm}[Well-posedness and continuity of the ridge-reduced objective]
\label{thm:ridge_wellposed}
Assume \textup{(A1)}--\textup{(A2)}. For every $\lambda>0$ and every
$\bm p\in\mathcal P$, the inner ridge problem
\[
\min_{\bm w\in\mathbb R^{m_1}}
\left\{
\frac12\|M(\bm p)\bm w-\bm y\|_2^2
+
\frac{\lambda}{2}\|\bm w\|_2^2
\right\}
\]
has a unique minimizer $\bm w_\lambda(\bm p)$. Moreover,
\[
\bm w_\lambda(\bm p)
=
\big(M(\bm p)^T M(\bm p)+\lambda I_{m_1}\big)^{-1}
M(\bm p)^T\bm y ,
\]
the map $\bm p\mapsto \bm w_\lambda(\bm p)$ is continuous on $\mathcal P$, and
$f_\lambda$ is continuous on $\mathcal P$. Consequently,
\[
\arg\min_{\bm p\in\mathcal P} f_\lambda(\bm p)\neq\emptyset .
\]
\end{thm}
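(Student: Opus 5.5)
The plan is a direct argument in three steps: solve the inner problem explicitly for fixed $\bm p$, show that this explicit minimizer depends continuously on $\bm p$, and conclude existence of an outer minimizer by Weierstrass.

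\emph{Inner problem.} Fix $\lambda>0$ and $\bm p\in\mathcal P$, and write $M=M(\bm p)$. The map $\bm w\mapsto J_\lambda(\bm p,\bm w)$ is a quadratic with Hessian $M^TM+\lambda I_{m_1}$. This matrix satisfies
\[
\bm v^T(M^TM+\lambda I_{m_1})\bm v=\|M\bm v\|_2^2+\lambda\|\bm v\|_2^2\ge\lambda\|\bm v\|_2^2 ,
\]
so it is symmetric positive definite and $J_\lambda(\bm p,\cdot)$ is $\lambda$-strongly convex and coercive. A minimizer therefore exists and is unique, and it is characterized by the first-order condition $(M^TM+\lambda I_{m_1})\bm w=M^T\bm y$. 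Inverting gives the stated closed form. No rank assumption is used, consistent with the remark that (A4) is not needed here.

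\emph{Continuity.} By (A2), $\bm p\mapsto M(\bm p)$ is continuous, so $\bm p\mapsto M(\bm p)^TM(\bm p)+\lambda I_{m_1}$ and $\bm p\mapsto M(\bm p)^T\bm y$ are continuous, being polynomial in the entries. Inversion is continuous on the open set of invertible matrices; one can see this via Cramer's rule, since the determinant is continuous and nonzero there. This is the only point needing care: we must know the matrices stay inside the invertible set, which the bound $\lambda_{\min}\ge\lambda>0$ guarantees uniformly in $\bm p$. Composing these maps shows that $\bm w_\lambda(\cdot)$ is continuous on $\mathcal P$.

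Then
\[
f_\lambda(\bm p)=J_\lambda(\bm p,\bm w_\lambda(\bm p))=\tfrac12\|M(\bm p)\bm w_\lambda(\bm p)-\bm y\|_2^2+\tfrac{\lambda}{2}\|\bm w_\lambda(\bm p)\|_2^2
\]
is a composition of continuous maps, hence continuous.

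\emph{Existence of an outer minimizer.} By (A1), $\mathcal P$ is compact and nonempty (implicitly assumed). The Weierstrass extreme value theorem then applies to the continuous function $f_\lambda$ on $\mathcal P$, so $\arg\min_{\mathcal P}f_\lambda\neq\emptyset$.

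There is no real obstacle in this argument. The step that deserves a sentence of justification is the continuity of the inverse, which relies on the uniform positive-definiteness from the $\lambda$-shift.
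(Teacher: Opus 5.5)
Your proposal is correct and follows essentially the same route as the paper's proof. Positive definiteness of $M(\bm p)^TM(\bm p)+\lambda I_{m_1}$, with no rank assumption, gives the unique minimizer via the normal equations; continuity of inversion on nonsingular matrices gives continuity of $\bm w_\lambda$ and $f_\lambda$; and Weierstrass on the compact set $\mathcal P$ gives a minimizer. Your extra remarks (coercivity, nonemptiness of $\mathcal P$) are harmless refinements, though the continuity step only needs pointwise invertibility, not the uniform bound $\lambda_{\min}\ge\lambda$.
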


\begin{proof}
Fix $\lambda>0$ and $\bm p\in\mathcal P$. The Hessian of
$J_\lambda(\bm p,\cdot)$ with respect to $\bm w$ is
\[
M(\bm p)^T M(\bm p)+\lambda I_{m_1}.
\]
Since $\lambda>0$, this matrix is symmetric positive definite, independently
of whether $M(\bm p)$ has full column rank. Hence
$J_\lambda(\bm p,\cdot)$ is strongly convex and admits a unique minimizer.
The first-order optimality condition gives
\[
\big(M(\bm p)^T M(\bm p)+\lambda I_{m_1}\big)\bm w_\lambda(\bm p)
=
M(\bm p)^T\bm y,
\]
which yields the stated formula.

Because $\bm p\mapsto M(\bm p)$ is continuous and matrix inversion is continuous
on the set of nonsingular matrices, the map
$\bm p\mapsto \bm w_\lambda(\bm p)$ is continuous on $\mathcal P$. Therefore
\[
f_\lambda(\bm p)
=
J_\lambda\big(\bm p,\bm w_\lambda(\bm p)\big)
\]
is also continuous on $\mathcal P$. Since $\mathcal P$ is compact by
\textup{(A1)}, the Weierstrass theorem implies that $f_\lambda$ attains its
minimum on $\mathcal P$.
\end{proof}

\begin{thm}[Well-posedness of the unregularized reduced objective]
\label{thm:unregularized_wellposed}
Assume \textup{(A1)}, \textup{(A2)}, and \textup{(A4)}. Then, for every
$\bm p\in\mathcal P$, the unregularized least-squares problem
\[
\min_{\bm w\in\mathbb R^{m_1}}
\frac12\|M(\bm p)\bm w-\bm y\|_2^2
\]
has a unique minimizer
\[
\bm w_u(\bm p)
=
\big(M(\bm p)^T M(\bm p)\big)^{-1}M(\bm p)^T\bm y .
\]
Moreover, $\bm p\mapsto\bm w_u(\bm p)$ and $f_u$ are continuous on
$\mathcal P$, and
\[
\arg\min_{\bm p\in\mathcal P} f_u(\bm p)\neq\emptyset .
\]
\end{thm}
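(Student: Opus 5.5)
The argument mirrors the proof of Theorem~\ref{thm:ridge_wellposed}, with the regularization $\lambda I_{m_1}$ replaced by the uniform rank condition (A4). Fix $\bm p\in\mathcal P$. The Hessian of $J_u(\bm p,\cdot)$ with respect to $\bm w$ is $M(\bm p)^T M(\bm p)$. By (A4), $M(\bm p)$ has trivial kernel, so for every $\bm v\neq 0$ we have $\bm v^T M(\bm p)^T M(\bm p)\bm v=\|M(\bm p)\bm v\|_2^2>0$. Hence the Hessian is symmetric positive definite, and its smallest eigenvalue is at least $\sigma_*^2$. It follows that $J_u(\bm p,\cdot)$ is a strongly convex quadratic with a unique minimizer. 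The normal equations $M(\bm p)^T M(\bm p)\bm w=M(\bm p)^T\bm y$ then give the stated formula for $\bm w_u(\bm p)$.

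For continuity, I will first note that $\bm p\mapsto M(\bm p)^T M(\bm p)$ is continuous on $\mathcal P$ by (A2), being a composition with matrix products. By (A4), it takes values in the open set of nonsingular matrices, on which inversion is continuous. Composing with the continuous map $\bm p\mapsto M(\bm p)^T\bm y$ shows that $\bm p\mapsto\bm w_u(\bm p)$ is continuous. Since $J_u$ is jointly continuous in $(\bm p,\bm w)$, the function $f_u(\bm p)=J_u(\bm p,\bm w_u(\bm p))$ is continuous as well. The uniform bound $\|(M^TM)^{-1}\|_2\le\sigma_*^{-2}$ additionally gives $\|\bm w_u(\bm p)\|_2\le\sigma_*^{-1}\|\bm y\|_2$. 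This is not strictly needed here, but it records the uniform boundedness mentioned after (A4).

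Finally, since $\mathcal P$ is compact by (A1) and $f_u$ is continuous, the Weierstrass theorem yields $\arg\min_{\bm p\in\mathcal P} f_u(\bm p)\neq\emptyset$.

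The only step requiring any care is where (A4) is used. Full column rank for each individual $\bm p$ already suffices for invertibility and hence for continuity, since continuity is a local property. The step I expect to need the most attention is therefore the equivalence, asserted in (A4), between pointwise full rank and the uniform bound $\sigma_*$. I will justify it directly: $\bm p\mapsto\sigma_{\min}(M(\bm p))$ is continuous, because singular values are $1$-Lipschitz with respect to the spectral norm (Weyl's inequality). It is positive on $\mathcal P$, and a positive continuous function attains a positive minimum on a compact set.
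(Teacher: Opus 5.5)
Your proposal is correct and takes essentially the same route as the paper: positive definiteness of $M(\bm p)^TM(\bm p)$ from (A4), the normal equations, continuity of $\bm w_u$ via continuity of matrix inversion on nonsingular matrices, and then Weierstrass on the compact set $\mathcal P$. Your extra remarks are valid refinements of points the paper only asserts: pointwise full rank already suffices for continuity, and the uniform bound $\sigma_*$ follows from Weyl's inequality together with compactness.
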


\begin{proof}
By \textup{(A4)}, $M(\bm p)$ has full column rank for every
$\bm p\in\mathcal P$. Hence $M(\bm p)^T M(\bm p)$ is symmetric positive
definite, and the unregularized least-squares problem has the unique minimizer
given by the normal equation
\[
M(\bm p)^T M(\bm p)\bm w_u(\bm p)=M(\bm p)^T\bm y .
\]
The continuity of $M$ and the uniform nonsingularity implied by \textup{(A4)}
give the continuity of $\bm w_u(\bm p)$. Therefore
\[
f_u(\bm p)=J_u\big(\bm p,\bm w_u(\bm p)\big)
\]
is continuous on $\mathcal P$. Since $\mathcal P$ is compact, $f_u$ attains its
minimum on $\mathcal P$.
\end{proof}

\subsection{Consistency of ridge-regularized minimizers}

The next result explains why the ridge-regularized reduced objective is a
consistent stabilization of the unregularized reduced objective as
$\lambda\to0$. The statement is formulated in terms of accumulation points,
because the reduced objective is generally nonconvex and may have multiple
global minimizers.

\begin{lemma}[Uniform comparison between $f_\lambda$ and $f_u$]
\label{lem:uniform_sandwich}
Assume \textup{(A1)}, \textup{(A2)}, and \textup{(A4)}. Then, for every
$\bm p\in\mathcal P$ and every $\lambda>0$,
\begin{equation}\label{eq:sandwich_new}
f_u(\bm p)
\le
f_\lambda(\bm p)
\le
f_u(\bm p)+\frac{\lambda}{2}\|\bm w_u(\bm p)\|_2^2 .
\end{equation}
Moreover, there exists a constant $C_w>0$ such that
\[
\|\bm w_u(\bm p)\|_2\le C_w,\qquad \forall \bm p\in\mathcal P.
\]
Consequently,
\begin{equation}\label{eq:uniform_convergence_flambda}
0\le f_\lambda(\bm p)-f_u(\bm p)\le \frac{\lambda}{2}C_w^2,
\qquad \forall \bm p\in\mathcal P,
\end{equation}
and hence $f_\lambda\to f_u$ uniformly on $\mathcal P$ as $\lambda\to0$.
\end{lemma}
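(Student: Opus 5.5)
The plan is to prove the two inequalities in \eqref{eq:sandwich_new} by comparing the two objectives at well-chosen test vectors, then obtain the uniform bound on $\bm w_u$ from compactness and Theorem~\ref{thm:unregularized_wellposed}.

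\emph{Lower bound.} Since $\frac{\lambda}{2}\|\bm w\|_2^2\ge 0$, we have $J_\lambda(\bm p,\bm w)\ge J_u(\bm p,\bm w)\ge f_u(\bm p)$ for every $\bm w$. Taking the infimum over $\bm w$ gives $f_\lambda(\bm p)\ge f_u(\bm p)$.

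\emph{Upper bound.} We test the ridge functional with the unregularized minimizer $\bm w_u(\bm p)$, which exists and is unique by Theorem~\ref{thm:unregularized_wellposed} under (A4). Then
\[
f_\lambda(\bm p)\le J_\lambda(\bm p,\bm w_u(\bm p))=f_u(\bm p)+\frac{\lambda}{2}\|\bm w_u(\bm p)\|_2^2 .
\]
Both directions hold pointwise in $\bm p$ and for every $\lambda>0$.

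\emph{Uniform bound on $\bm w_u$.} There are two routes, and I would give the first.
\begin{itemize}
\item[(i)] By Theorem~\ref{thm:unregularized_wellposed}, $\bm p\mapsto\bm w_u(\bm p)$ is continuous on the compact set $\mathcal P$ (A1). Hence $\|\bm w_u(\cdot)\|_2$ attains a finite maximum, and we set $C_w$ to be that maximum, or $1$ if the maximum is zero, so that $C_w>0$.
\item[(ii)] As a cross-check, and to obtain an explicit constant, use the SVD of $M(\bm p)$. This gives $\|\bm w_u(\bm p)\|_2=\|M(\bm p)^{+}\bm y\|_2\le \|\bm y\|_2/\sigma_{\min}(M(\bm p))\le \|\bm y\|_2/\sigma_*$, using the uniform lower bound $\sigma_*$ from (A4). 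So $C_w=\max\{\|\bm y\|_2/\sigma_*,1\}$ also works.
\end{itemize}

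\emph{Conclusion.} Inserting $C_w$ into the sandwich gives \eqref{eq:uniform_convergence_flambda}. The right-hand side of \eqref{eq:uniform_convergence_flambda} is independent of $\bm p$, so $\sup_{\mathcal P}|f_\lambda-f_u|\le \lambda C_w^2/2\to0$, which is the claimed uniform convergence.

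The only step that is not immediate is the uniform bound. Its substance is that (A4) gives rank deficiency nowhere on $\mathcal P$, so the pseudoinverse cannot blow up. Whichever route is used, I would rely on the equivalence already stated in (A4): the minimum of the continuous function $\sigma_{\min}(M(\bm p))$ over the compact $\mathcal P$ is positive. Everything else is elementary comparison of minima.
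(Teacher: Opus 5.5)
Your proposal is correct and follows essentially the same route as the paper. The lower bound comes from nonnegativity of the ridge term, the upper bound from testing $J_\lambda$ at $\bm w_u(\bm p)$, and the uniform bound $C_w$ from continuity of $\bm w_u$ (Theorem~\ref{thm:unregularized_wellposed}) on the compact set $\mathcal P$. Your optional SVD cross-check $\|\bm w_u(\bm p)\|_2\le\|\bm y\|_2/\sigma_*$ is also valid and has the small advantage of giving an explicit constant, and setting $C_w=1$ when the maximum is zero is a minor detail the paper omits.
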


\begin{proof}
For any fixed $\bm p$, the first inequality follows because the ridge objective
contains the nonnegative term $\frac{\lambda}{2}\|\bm w\|_2^2$. For the second
inequality, evaluate the ridge objective at the unregularized minimizer
$\bm w_u(\bm p)$:
\[
f_\lambda(\bm p)
\le
J_\lambda(\bm p,\bm w_u(\bm p))
=
J_u(\bm p,\bm w_u(\bm p))
+
\frac{\lambda}{2}\|\bm w_u(\bm p)\|_2^2
=
f_u(\bm p)
+
\frac{\lambda}{2}\|\bm w_u(\bm p)\|_2^2 .
\]
This proves \eqref{eq:sandwich_new}.

By Theorem~\ref{thm:unregularized_wellposed}, $\bm w_u$ is continuous on the
compact set $\mathcal P$. Therefore $\|\bm w_u(\bm p)\|_2$ is bounded on
$\mathcal P$, which gives the constant $C_w$. The uniform estimate
\eqref{eq:uniform_convergence_flambda} follows immediately.
\end{proof}

\begin{thm}[Consistency of ridge-regularized minimizers]
\label{thm:ridge_consistency}
Assume \textup{(A1)}, \textup{(A2)}, and \textup{(A4)}. Let
$\lambda_k\downarrow0$, and for each $k$ choose
\[
\bm p_k\in\arg\min_{\bm p\in\mathcal P} f_{\lambda_k}(\bm p).
\]
Then every accumulation point of $\{\bm p_k\}$ belongs to
$\arg\min_{\bm p\in\mathcal P} f_u(\bm p)$. In particular, if the unregularized
reduced objective $f_u$ has a unique minimizer $\bm p_u$, then
\[
\bm p_k\to \bm p_u
\qquad \text{as } k\to\infty .
\]
\end{thm}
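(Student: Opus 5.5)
This is the standard $\Gamma$-convergence-type argument: uniform convergence together with continuity of the limit functional. It rests on Lemma~\ref{lem:uniform_sandwich} and Theorem~\ref{thm:unregularized_wellposed}.

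Let $\bar{\bm p}$ be an accumulation point of $\{\bm p_k\}$. Such a point exists and lies in $\mathcal P$ because $\mathcal P$ is compact by (A1). Pass to a subsequence $\bm p_{k_j}\to\bar{\bm p}$, and fix an arbitrary competitor $\bm q\in\mathcal P$. By minimality of $\bm p_{k_j}$ for $f_{\lambda_{k_j}}$ we have $f_{\lambda_{k_j}}(\bm p_{k_j})\le f_{\lambda_{k_j}}(\bm q)$. Now apply the two sides of \eqref{eq:uniform_convergence_flambda}. The left side is bounded below by $f_u(\bm p_{k_j})$. The right side is bounded above by $f_u(\bm q)+\tfrac{\lambda_{k_j}}{2}C_w^2$. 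Combining these gives
\[
f_u(\bm p_{k_j})\le f_u(\bm q)+\tfrac{\lambda_{k_j}}{2}C_w^2 .
\]

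Next let $j\to\infty$. Theorem~\ref{thm:unregularized_wellposed} makes $f_u$ continuous on $\mathcal P$, so the left side tends to $f_u(\bar{\bm p})$, and the error term vanishes because $\lambda_{k_j}\to0$. Hence $f_u(\bar{\bm p})\le f_u(\bm q)$ for every $\bm q\in\mathcal P$, that is, $\bar{\bm p}\in\arg\min_{\mathcal P}f_u$. As a by-product, $\min f_{\lambda_k}\to\min f_u$.

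For the uniqueness statement, suppose $f_u$ has a unique minimizer $\bm p_u$ but $\bm p_k\not\to\bm p_u$. Then there is an $\varepsilon>0$ and a subsequence staying outside the ball $B_\varepsilon(\bm p_u)$. By compactness this subsequence has a further convergent subsequence. Its limit is an accumulation point of $\{\bm p_k\}$ lying at distance at least $\varepsilon$ from $\bm p_u$. This contradicts the first part, which forces every accumulation point to equal $\bm p_u$.

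There is no real obstacle here. The only points needing care are where the hypotheses enter. The uniform constant $C_w$, which comes from (A4) via Lemma~\ref{lem:uniform_sandwich}, is what lets the comparison at the moving point $\bm p_{k_j}$ hold. Continuity of $f_u$ is what transfers the inequality to the limit $\bar{\bm p}$.
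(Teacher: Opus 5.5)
Your proof is correct and is essentially the paper's argument. The only differences are cosmetic: the paper compares against a fixed minimizer $\bm p_u$ of $f_u$ (whose existence it takes from Theorem~\ref{thm:unregularized_wellposed}) rather than an arbitrary competitor $\bm q$, and your contradiction argument for the uniqueness case is its subsequence argument written out.

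One small correction to your closing remark. The inequality at the moving point $\bm p_{k_j}$ is the trivial bound $f_u\le f_{\lambda}$, so $C_w$ is used only at the fixed competitor $\bm q$, where a pointwise bound would suffice. The indispensable use of (A4) in your argument is therefore the continuity of $f_u$ at $\bar{\bm p}$.
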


\begin{proof}
Since $\mathcal P$ is compact, the sequence $\{\bm p_k\}$ has at least one
accumulation point. Let $\bm p_{k_j}\to\bar{\bm p}$ be any convergent
subsequence.

Let $\bm p_u\in\arg\min_{\bm p\in\mathcal P}f_u(\bm p)$, whose existence is
guaranteed by Theorem~\ref{thm:unregularized_wellposed}. By the optimality of
$\bm p_{k_j}$ for $f_{\lambda_{k_j}}$ and by Lemma~\ref{lem:uniform_sandwich},
we have
\[
f_u(\bm p_{k_j})
\le
f_{\lambda_{k_j}}(\bm p_{k_j})
\le
f_{\lambda_{k_j}}(\bm p_u)
\le
f_u(\bm p_u)+\frac{\lambda_{k_j}}{2}C_w^2 .
\]
On the other hand, since $\bm p_u$ minimizes $f_u$,
\[
f_u(\bm p_u)\le f_u(\bm p_{k_j}).
\]
Therefore,
\[
f_u(\bm p_u)
\le
f_u(\bm p_{k_j})
\le
f_u(\bm p_u)+\frac{\lambda_{k_j}}{2}C_w^2 .
\]
Letting $j\to\infty$ gives
\[
f_u(\bm p_{k_j})\to f_u(\bm p_u).
\]
Since $f_u$ is continuous and $\bm p_{k_j}\to\bar{\bm p}$,
\[
f_u(\bar{\bm p})=f_u(\bm p_u).
\]
Thus $\bar{\bm p}\in\arg\min_{\bm p\in\mathcal P}f_u(\bm p)$.

If $f_u$ has a unique minimizer $\bm p_u$, then every convergent subsequence of
$\{\bm p_k\}$ has the same limit $\bm p_u$. Compactness of $\mathcal P$ then
implies convergence of the whole sequence to $\bm p_u$.
\end{proof}

\begin{rem}[Interpretation of the consistency result]
Theorem~\ref{thm:ridge_consistency} is a statement about global minimizers of
the reduced objectives. It does not imply that a practical nonconvex optimizer,
such as Adam or L-BFGS, must find a global minimizer of $f_\lambda$. Rather, it
shows that the ridge-regularized reduced problem is a consistent stabilization
of the unregularized reduced problem at the level of global minimizers.
\end{rem}

\subsection{Gradient formula for the reduced ridge objective}

We now derive the gradient formula used in the outer optimization over
$\bm p$. This formula avoids differentiating the least-squares solution
$\bm w_\lambda(\bm p)$ explicitly.

\begin{thm}[Gradient formula via the envelope theorem]
\label{thm:gradient_formula}
Assume \textup{(A3)}. Let $\lambda>0$. Then $f_\lambda$ admits a $C^1$
extension to $\mathcal O$. For every $\bm p\in\mathcal P$ and
$i=1,\ldots,d$,
\begin{equation}\label{eq:gradient_formula_new}
\frac{\partial f_\lambda}{\partial p_i}(\bm p)
=
\bm r_\lambda(\bm p)^T
\left(
\frac{\partial M(\bm p)}{\partial p_i}
\right)
\bm w_\lambda(\bm p),
\end{equation}
where
\[
\bm r_\lambda(\bm p)
:=
M(\bm p)\bm w_\lambda(\bm p)-\bm y.
\]
\end{thm}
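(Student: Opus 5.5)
The idea is to define the extension explicitly on all of $\mathcal O$ and differentiate it by the chain rule; the explicit extension makes the envelope-theorem argument rigorous. For $\bm p\in\mathcal O$ set
\[
\bm w_\lambda(\bm p):=\big(M(\bm p)^TM(\bm p)+\lambda I_{m_1}\big)^{-1}M(\bm p)^T\bm y,
\qquad
f_\lambda(\bm p):=J_\lambda\big(\bm p,\bm w_\lambda(\bm p)\big).
\]
The argument of Theorem~\ref{thm:ridge_wellposed} works for every $\bm p\in\mathcal O$, not only on $\mathcal P$: for $\lambda>0$ the matrix $K(\bm p):=M(\bm p)^TM(\bm p)+\lambda I_{m_1}$ is symmetric positive definite, whatever the rank of $M(\bm p)$. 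So $\bm w_\lambda(\bm p)$ is the unique minimizer of $J_\lambda(\bm p,\cdot)$, and $f_\lambda(\bm p)=\min_{\bm w}J_\lambda(\bm p,\bm w)$ on $\mathcal O$. This gives an extension of $f_\lambda$ from $\mathcal P$ to $\mathcal O$.

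Next comes regularity. By (A3), $\bm p\mapsto K(\bm p)$ and $\bm p\mapsto M(\bm p)^T\bm y$ are $C^1$ on $\mathcal O$. Matrix inversion is $C^\infty$ on the open set of nonsingular matrices, with derivative $-K^{-1}(\partial K)K^{-1}$. Hence $\bm w_\lambda\in C^1(\mathcal O;\mathbb R^{m_1})$. The function $J_\lambda$ is a polynomial in the entries of $M$ and $\bm w$, so it is $C^1$ jointly in $(\bm p,\bm w)$ on $\mathcal O\times\mathbb R^{m_1}$. The composition $f_\lambda$ is therefore $C^1$ on $\mathcal O$.

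For the formula, apply the chain rule:
\[
\frac{\partial f_\lambda}{\partial p_i}(\bm p)
=\frac{\partial J_\lambda}{\partial p_i}\big(\bm p,\bm w_\lambda(\bm p)\big)
+\nabla_{\bm w}J_\lambda\big(\bm p,\bm w_\lambda(\bm p)\big)^T\frac{\partial \bm w_\lambda}{\partial p_i}(\bm p).
\]
The first-order optimality condition $\nabla_{\bm w}J_\lambda=M^T(M\bm w_\lambda-\bm y)+\lambda\bm w_\lambda=0$ makes the second term vanish; this is the envelope step, and it is why $\partial\bm w_\lambda/\partial p_i$ never has to be computed. In the first term the regularizer $\tfrac{\lambda}{2}\|\bm w\|_2^2$ does not depend on $\bm p$, so only the residual part contributes:
\[
\frac{\partial}{\partial p_i}\,\tfrac12\|M(\bm p)\bm w-\bm y\|_2^2
=(M(\bm p)\bm w-\bm y)^T\Big(\frac{\partial M(\bm p)}{\partial p_i}\Big)\bm w .
\]
Evaluating at $\bm w=\bm w_\lambda(\bm p)$ gives $\bm r_\lambda(\bm p)^T(\partial_{p_i}M)\bm w_\lambda(\bm p)$, which is \eqref{eq:gradient_formula_new}.

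The main obstacle is making sure the differentiability of $\bm w_\lambda$ holds on an open set, so that the chain rule is legitimate even at boundary points of $\mathcal P$. That is the reason for working on $\mathcal O$ rather than on $\mathcal P$. The step that needs care is that invertibility of $K(\bm p)$ comes from $\lambda>0$ alone, so no rank assumption such as (A4) is needed and no degeneracy can occur anywhere in $\mathcal O$. Everything else is routine calculus.
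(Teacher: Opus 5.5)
Your proof is correct and follows essentially the same route as the paper. You use the closed-form ridge minimizer, obtain $C^1$ dependence from invertibility of $M^TM+\lambda I$ (which needs only $\lambda>0$), apply the chain rule with the $\nabla_{\bm w}J_\lambda$ term eliminated by first-order optimality, and take the explicit $p_i$-derivative of the residual term. The one thing you do more carefully than the paper is define the extension of $f_\lambda$ to $\mathcal O$ explicitly; the paper only uses the same formula on $\mathcal O$ implicitly, and your version makes the partial derivatives at boundary points of $\mathcal P$ meaningful.
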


\begin{proof}
For fixed $\lambda>0$, the ridge minimizer is
\[
\bm w_\lambda(\bm p)
=
\big(M(\bm p)^T M(\bm p)+\lambda I_{m_1}\big)^{-1}
M(\bm p)^T\bm y .
\]
Since $M\in C^1(\mathcal O;\mathbb R^{n\times m_1})$ and the matrix
$M(\bm p)^T M(\bm p)+\lambda I_{m_1}$ is uniformly positive definite,
$\bm w_\lambda$ is continuously differentiable with respect to $\bm p$.

Using
\[
f_\lambda(\bm p)=J_\lambda(\bm p,\bm w_\lambda(\bm p)),
\]
the chain rule gives
\[
\frac{\partial f_\lambda}{\partial p_i}(\bm p)
=
\frac{\partial J_\lambda}{\partial p_i}
\big(\bm p,\bm w_\lambda(\bm p)\big)
+
\nabla_{\bm w}J_\lambda
\big(\bm p,\bm w_\lambda(\bm p)\big)^T
\frac{\partial \bm w_\lambda}{\partial p_i}(\bm p).
\]
The second term vanishes because $\bm w_\lambda(\bm p)$ is the unique minimizer
of $J_\lambda(\bm p,\cdot)$ and therefore satisfies
\[
\nabla_{\bm w}J_\lambda
\big(\bm p,\bm w_\lambda(\bm p)\big)=\bm 0.
\]
It remains to compute the explicit partial derivative with respect to $p_i$,
holding $\bm w$ fixed. Since the regularization term does not depend explicitly
on $\bm p$,
\[
\frac{\partial J_\lambda}{\partial p_i}(\bm p,\bm w)
=
\big(M(\bm p)\bm w-\bm y\big)^T
\left(
\frac{\partial M(\bm p)}{\partial p_i}
\right)\bm w .
\]
Setting $\bm w=\bm w_\lambda(\bm p)$ gives
\eqref{eq:gradient_formula_new}.
\end{proof}

\begin{rem}\label{Practical evaluation}
When the derivatives $\partial M(\bm p)/\partial p_i$ are available
analytically, formula \eqref{eq:gradient_formula_new} provides an efficient
gradient evaluation without differentiating through the linear solver. If these
derivatives are expensive or unavailable, they can be approximated by finite
differences. In that case, the formula should be interpreted as the target
continuous gradient, while the implemented gradient is its finite-difference
approximation.
\end{rem}

\subsection{A practical lower-bound estimate for the ridge parameter}

The ridge parameter $\lambda$ is used to stabilize the inner least-squares solve
during distribution-parameter training. Mathematically, any $\lambda>0$ makes
$M(\bm p)^T M(\bm p)+\lambda I$ positive definite. Numerically, however, an
excessively small $\lambda$ may not sufficiently improve conditioning when
$M(\bm p)$ is severely ill-conditioned.

Let $\epsilon_{\rm mach}$ denote the machine precision. A common practical
criterion is to require
\begin{equation}\label{eq:cond_criterion_new}
\kappa_2\!\left(M^T M+\lambda I\right)\epsilon_{\rm mach}
<
\tau_\lambda ,
\end{equation}
where $\tau_\lambda\in(0,1)$ is a prescribed tolerance. The following estimate
does not rely on Assumption~\textup{(A4)}; it is intended to quantify the
stabilizing effect of ridge regularization even when the least-squares matrix is
ill-conditioned or rank deficient.

\begin{thm}[Conditioning-based lower bound for $\lambda$]
\label{thm:lambda_lower_bound}
Let $M\in\mathbb R^{n\times m_1}$ be fixed, with $n\ge m_1$. Let the singular
values of $M$ be
\[
\sigma_1\ge \sigma_2\ge\cdots\ge \sigma_{m_1}\ge0,
\]
and set $\sigma_{\max}=\sigma_1$ and $\sigma_{\min}=\sigma_{m_1}$. In this
conditioning estimate, $\sigma_{\min}=0$ is allowed, corresponding to a
rank-deficient least-squares matrix.
Assume $\tau_\lambda>\epsilon_{\rm mach}$. If
\begin{equation}\label{eq:lambda_lower_bound_new}
\lambda > \max\left\{
0,\,
\frac{
\epsilon_{\rm mach}\sigma_{\max}^2
-
\tau_\lambda\sigma_{\min}^2
}{
\tau_\lambda-\epsilon_{\rm mach}
}
\right\},
\end{equation}
then
\[
\kappa_2(M^T M+\lambda I)\epsilon_{\rm mach}<\tau_\lambda .
\]
In particular, in the nearly rank-deficient case
$\sigma_{\min}^2\approx0$, this bound becomes
\begin{equation}\label{eq:lambda_simplified_new}
\lambda
>
\frac{\epsilon_{\rm mach}}{\tau_\lambda-\epsilon_{\rm mach}}
\sigma_{\max}^2
\approx
\frac{\epsilon_{\rm mach}}{\tau_\lambda}
\sigma_{\max}^2,
\end{equation}
where the approximation uses $\epsilon_{\rm mach}\ll\tau_\lambda$.
\end{thm}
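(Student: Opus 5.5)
We compute the spectrum of $M^TM+\lambda I$ from the singular value decomposition of $M$, so that the condition number has a closed form, and then solve the resulting linear inequality in $\lambda$.

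Write the SVD $M=Q\Sigma V^T$ with $Q\in\mathbb R^{n\times n}$ and $V\in\mathbb R^{m_1\times m_1}$ orthogonal. Since $n\ge m_1$, we have $M^TM=V\,\mathrm{diag}(\sigma_1^2,\ldots,\sigma_{m_1}^2)V^T$. Hence $M^TM+\lambda I$ is symmetric with eigenvalues $\sigma_i^2+\lambda$. For $\lambda>0$ all of these are positive, including the rank-deficient case $\sigma_{\min}=0$. The $2$-norm condition number of a symmetric positive definite matrix is the ratio of its extreme eigenvalues, so
\[
\kappa_2(M^TM+\lambda I)=\frac{\sigma_{\max}^2+\lambda}{\sigma_{\min}^2+\lambda}.
\]
This agrees with the expression for $\kappa$ stated earlier in Section~\ref{sec:AD-RaNN}.

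Next we reduce the target inequality $\kappa_2\,\epsilon_{\rm mach}<\tau_\lambda$ to a linear one. Because $\sigma_{\min}^2+\lambda>0$, multiplying through preserves the inequality, and the target is equivalent to
\[
\epsilon_{\rm mach}(\sigma_{\max}^2+\lambda)<\tau_\lambda(\sigma_{\min}^2+\lambda),
\]
that is,
\[
\lambda(\tau_\lambda-\epsilon_{\rm mach})>\epsilon_{\rm mach}\sigma_{\max}^2-\tau_\lambda\sigma_{\min}^2 .
\]
The assumption $\tau_\lambda>\epsilon_{\rm mach}$ makes the coefficient of $\lambda$ positive, so we may divide by it. The condition then becomes $\lambda>(\epsilon_{\rm mach}\sigma_{\max}^2-\tau_\lambda\sigma_{\min}^2)/(\tau_\lambda-\epsilon_{\rm mach})$.

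The hypothesis \eqref{eq:lambda_lower_bound_new} supplies exactly this inequality. It also gives $\lambda>0$, which we needed both for positive definiteness and for the multiplication step above. Including $0$ in the maximum handles the case where the fraction is negative; the condition then holds for every positive $\lambda$, which is consistent because $\kappa_2\epsilon_{\rm mach}<\tau_\lambda$ is already true at $\lambda\to0^+$.

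For the simplified bound \eqref{eq:lambda_simplified_new}, we set $\sigma_{\min}=0$ in the bound just derived, which gives $\lambda>\epsilon_{\rm mach}\sigma_{\max}^2/(\tau_\lambda-\epsilon_{\rm mach})$. The stated approximation follows from $\tau_\lambda-\epsilon_{\rm mach}\approx\tau_\lambda$ when $\epsilon_{\rm mach}\ll\tau_\lambda$.

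None of the steps is a serious obstacle. The only points needing care are keeping the inequality directions correct, which relies on the positivity of $\sigma_{\min}^2+\lambda$ and of $\tau_\lambda-\epsilon_{\rm mach}$, and the handling of the zero branch of the maximum.
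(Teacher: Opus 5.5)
Your proposal is correct and follows essentially the same route as the paper: you take the eigenvalues $\sigma_i^2+\lambda$ of $M^TM+\lambda I$ and write $\kappa_2$ as the ratio of the extreme ones. You then clear the positive denominator $\sigma_{\min}^2+\lambda$, divide by $\tau_\lambda-\epsilon_{\rm mach}>0$, and combine the result with $\lambda>0$; the only cosmetic difference is that you derive the spectrum explicitly from the SVD, which the paper takes as immediate.
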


\begin{proof}
Since the eigenvalues of $M^T M$ are $\sigma_i^2$, the eigenvalues of
$M^T M+\lambda I$ are $\sigma_i^2+\lambda$. Hence, for $\lambda>0$,
\[
\kappa_2(M^T M+\lambda I)
=
\frac{\sigma_{\max}^2+\lambda}{\sigma_{\min}^2+\lambda}.
\]
Therefore the condition
\[
\kappa_2(M^T M+\lambda I)\epsilon_{\rm mach}<\tau_\lambda
\]
is equivalent to
\[
\epsilon_{\rm mach}(\sigma_{\max}^2+\lambda)
<
\tau_\lambda(\sigma_{\min}^2+\lambda).
\]
Since $\tau_\lambda>\epsilon_{\rm mach}$, this is equivalent to
\[
\lambda
>
\frac{
\epsilon_{\rm mach}\sigma_{\max}^2
-
\tau_\lambda\sigma_{\min}^2
}{
\tau_\lambda-\epsilon_{\rm mach}
}.
\]
Combining this inequality with the requirement $\lambda>0$ for ridge
regularization gives \eqref{eq:lambda_lower_bound_new}.

If $\sigma_{\min}^2\approx0$, the bound reduces to
\[
\lambda
>
\frac{\epsilon_{\rm mach}}{\tau_\lambda-\epsilon_{\rm mach}}
\sigma_{\max}^2 .
\]
Since typically $\epsilon_{\rm mach}\ll\tau_\lambda$, this yields the practical
approximation
\[
\frac{\epsilon_{\rm mach}}{\tau_\lambda-\epsilon_{\rm mach}}
\sigma_{\max}^2
\approx
\frac{\epsilon_{\rm mach}}{\tau_\lambda}
\sigma_{\max}^2.
\]
\end{proof}

\begin{rem}[Role of the lower bound]
The estimate in Theorem~\ref{thm:lambda_lower_bound} should be understood as a
practical conditioning guideline rather than an approximation-error optimal
choice of $\lambda$. A larger $\lambda$ generally improves conditioning but
introduces stronger bias in the reduced training objective. Therefore, in the
two-stage AD-RaNN strategy, ridge regularization is used only during the
distribution-parameter training stage, while the final output coefficients are
obtained by solving the unregularized least-squares problem with the learned
distribution parameter.
\end{rem}

\begin{rem}\label{rem:On-the-full-rank-assumption}
Assumption \textup{(A4)} is a nondegeneracy condition on the admissible
distribution-parameter set. In randomized feature constructions, full column
rank is often expected for generic collocation sets and nondegenerate random
features, but it is not guaranteed merely because the sampled hidden parameters
are distinct. Nearly dependent features can still lead to severe ill-conditioning
in finite precision. Ridge regularization is introduced precisely to stabilize
the inner problem in such regimes. The consistency result above uses
\textup{(A4)} because it compares the ridge problem with the unregularized
least-squares problem through the unique and uniformly bounded minimizer
$\bm w_u(\bm p)$.
\end{rem}

\begin{rem}[Scope of the analysis]
The results in this section concern the reduced optimization problem generated
after collocation, discretization, or data sampling. They do not by themselves
constitute a PDE-level convergence theory for AD-RaNN. Such a theory would also
need to quantify the approximation power of the randomized feature space, the
effect of collocation or quadrature, the stability of the underlying PDE
formulation, and the behavior of the nonconvex outer optimizer. The present
analysis instead justifies the reduced ridge training problem and its use as a
stable distribution-learning mechanism.
\end{rem}

\section{Numerical Experiments}\label{sec:Numerical_experiment}

Throughout this section, the hidden parameters are sampled from the uniform distribution $\mathcal{U}(-\bm{r},\bm{r})$, and the activation function is
taken to be the Gaussian function, except for the Black-Scholes equation and the
diffusion-reaction dynamics solved by the AD-RaNN-DeepONet model, where the $\tanh$
activation is used.
Unless otherwise stated, all neural-network experiments are implemented in MATLAB
and conducted on a workstation equipped with an Intel(R) Xeon(R) Gold 6240 CPU
and an NVIDIA RTX A6000 GPU. The results are visualized using MATLAB and Python
with Matplotlib.

\par
The accuracy is measured by the relative $\ell_2$ error (hereafter referred to as the $\ell_2$ error):
\begin{equation}
\ell_2 \text{ error} = \sqrt{\frac{\sum_{j=1}^{n_p} |\hat{u}(x_j) - u_r(x_j)|^2}{\sum_{j=1}^{n_p} |u_r(x_j)|^2}}
\end{equation}
where $\hat{u}$ denotes the computed solution and $u_r$ is either the exact
solution (when available) or a high-accuracy reference solution obtained by a
traditional numerical method.

Unless otherwise stated, the compared randomized neural network methods within each experiment use the same collocation sets, the same residual formulation, and comparable numbers of hidden features, so that the reported differences primarily reflect the effect of distribution adaptation rather than changes in the underlying discretization setup.

\par
To avoid the memory and computational cost of backpropagation, all derivatives
of the Gaussian activation function
\begin{equation}\label{eq:activation_fun}
    \bm{\phi} = \exp\!\big(-(\bm{W} \bm{x} + \bm{b})^2\big),
\end{equation}
are computed analytically. Let $\bm{z} = \bm{W} \bm{x} + \bm{b}$. All operations on $\bm{z}$ are understood element-wise. Then
\begin{equation}
    \bm{\phi} = \exp\left(-\bm{z}^2\right).
\end{equation}
The first and second derivatives are given by
\begin{align}
\frac{\partial \bm{\phi}}{\partial x_i}
  &= -2 \bm{W}_i \bm{z}\,\bm{\phi}, \\
\frac{\partial^2 \bm{\phi}}{\partial x_i^2}
  &= 2 \bm{W}_i^2 \,(2\bm{z}^2 - 1)\,\bm{\phi}, \\
\frac{\partial^2 \bm{\phi}}{\partial x_i \partial x_j}
  &= 2 \bm{W}_i \bm{W}_j \,(2\bm{z}^2 - 1)\,\bm{\phi}.
\end{align}
\par
Unless otherwise stated, all penalty parameters $\eta_i$ and $\eta_b$ are
set to $100$. Nonlinear iterations are performed using Newton's iteration. The
layer-growth strategy (denoted by LG) is employed to enhance local resolution
near steep gradients. The reported LG time is the sum of the baseline runtime (AD-RaNN or Fre-based) and the additional time incurred by the layer-growth strategy.
\par
    For the discrete-time framework,  the convergence
order in time is computed as
\begin{equation}
    \mathrm{order}
    \approx
    \frac{
      \log\!\left( E_{N_t} / E_{2N_t} \right)
    }{
      \log\!\left( \Delta t_{N_t} / \Delta t_{2N_t} \right)
    },
\end{equation}
where $E_{N_t}$ denotes the numerical error evaluated with $N_t$ time steps.
\par
Random sampling is performed using MATLAB's \texttt{rand} function. The discrete
Fourier transform (DFT) in the frequency-based (Fre-based) initialization is
computed using MATLAB's \texttt{fft} function on a $100\times 100$ sampling
grid.
\par
In a two-dimensional domain $D = (x_{11},x_{12})\times (x_{21},x_{22})$, a tensor grid of
training points with resolution $n_1\times n_2$ is generated as
\begin{equation}
\begin{aligned}
x_{1k} &= x_{11} + \epsilon + (k-1)\frac{x_{12}-x_{11}-2\epsilon}{n_1-1}, \\
x_{2l} &= x_{21} + \epsilon + (l-1)\frac{x_{22}-x_{21}-2\epsilon}{n_2-1},
\end{aligned}
\end{equation}
where $\epsilon$ is a small offset introduced to avoid boundary effects, and
$\bm{x}_{k,l} = (x_{1k},x_{2l})$ denotes the collocation points. The quantity
$m_i$ denotes the number of neurons in the $i$-th hidden layer. 
\par
We use $m_\lambda$ to denote the number of neurons employed during the training stage. Unless otherwise specified, $m_\lambda$ is taken to be equal to $m_1$, the number of neurons used in the solution stage. Moreover, the maximum number of nonlinear iterations is set to $K_{\max}=15$.

\subsection{Effectiveness of proposed methods}\label{sec:Effectiveness_of proposed_methods}
In this section, we consider three test cases with exact solutions:
\begin{itemize}
    \item \textbf{Case 1}: Compares PDAD with the PINN, SNN, Fre-based, and Evolution method, demonstrates the effectiveness of AD-RaNN, and investigates the influence of the ridge regularization parameter $\lambda$.
    \item \textbf{Case 2}: Combines layer growth for nonlinear PDEs.
    \item \textbf{Case 3}: Presents the performance of PDAD-DT and DDAD-DT on time-dependent nonlinear PDEs.
\end{itemize}
The related hyperparameters, including the learning rate, the number of collocation points,
the number of training iterations, and the number of neurons $m_1$, are summarized in Table~\ref{tab:sub1_para}.

\begin{table}[!ht]
\centering
\caption{Hyperparameter settings for Section~\ref{sec:Effectiveness_of proposed_methods}.}
\label{tab:sub1_para}
\begin{tabular}{|c|c|c|c|c|}
\hline
Case & Learning rate & Collocation points & Training iterations & Neurons ($m_1$) \\
\hline
1 &1 & $30\times 80$  & 20 & 600 \\
2 & 0.5 & $100\times 100$& 10 & 2000, 4000 \\
3 & 0.5 & $80\times 50$  & 15 & 1200 \\
\hline
\end{tabular}
\end{table}

\par
\textbf{Case 1: Two-dimensional linear Poisson equation}

In Section~\ref{sec:example}, we examined the effect of manually chosen distribution parameters for the Poisson equation.
Here, we further consider the same two-dimensional Poisson problem~\eqref{eq:poisson} on $\Omega=(-1,1)^2$ and compare PDAD with several representative baselines, including PINN, SNN, the Fre-based strategy, and a differential-evolution-based parameter search.

A suitable initial guess for $\bm p$ can significantly reduce the computational cost.
Following Remark~\ref{rem4.2}, we adopt the frequency-based initialization strategy, which gives $\bm p_{\rm Fre}=(r_x,r_y)=(6.00,12.00)$.
The Fre-based method uses this parameter directly without further adaptation.
For the evolution-based baseline, the same Fre-based parameter is used to initialize the population, and the differential-evolution search is performed for 20 generations.
For PDAD, the same parameter is used as the initial value of the reduced distribution optimization.

The results in Table~\ref{tab:poisson_mannal_PDAD} show that PDAD consistently improves upon the fixed Fre-based strategy.
With the same number of randomized basis functions, the Fre-based method yields a relative $\ell_2$ error of $2.68\times10^{-3}$, whereas PDAD reduces the error to $3.63\times10^{-9}$ when $\lambda=10^{-7}$.
The evolution-based search also identifies a substantially better distribution parameter than the fixed Fre-based choice, but at a noticeably higher runtime than PDAD in this test.

We also include PINN and SNN as representative neural-network baselines under the training settings specified below Table~\ref{tab:poisson_mannal_PDAD}.
Specifically, PINN is trained with 3000 Adam iterations followed by 3000 L-BFGS iterations, while SNN is trained with 1200 Adam iterations.
Under these settings, PDAD achieves higher accuracy and a shorter runtime in this example.
Although such cross-method comparisons inevitably depend on implementation details and hyperparameter choices, the results still indicate that distribution-level adaptation can efficiently identify effective anisotropic distribution parameters for this problem.
The results also suggest that, in this example, smaller ridge parameters tend to yield more accurate PDAD solutions.

\begin{table}[!ht]
\centering
\caption{Comparison of PINN, SNN, Fre-based, differential-evolution-based, and PDAD methods for the Poisson problem~\eqref{eq:poisson}. 
For the Fre-based, differential-evolution-based, and PDAD methods, we report the hidden-layer neurons (HLN), distribution parameters $\bm p=(r_x,r_y)$, runtime (s), and relative $\ell_2$ error; the ridge parameter $\lambda$ is reported only for PDAD. For PINN and SNN, the HLN column reports the network architecture, together with the corresponding runtime and relative $\ell_2$ error.}
\label{tab:poisson_mannal_PDAD}
\setlength{\tabcolsep}{12pt}
\renewcommand{\arraystretch}{1.2}
\begin{tabular}{cccccc}
\hline
Method & $\lambda$ & HLN & $\bm{p}=(r_x,r_y)$ & time (s) & Relative $\ell_2$ error \\

\hline
PINN & -- & $100\times100\times100$ & -- & 145.69 & 4.15e-03 \\
\hline
SNN & -- & $100\times100\times100\times600$ & -- & 68.44 & 8.83e-05 \\
\hline
Fre-based & -- & 600 & $(6.00,12.00)$ & 2.53 & 2.68e-03 \\
\hline
Evolution & -- & 600 & $(1.42,5.44)$ & 16.92 &  1.17e-08 \\
\hline
\multirow{4}{*}{PDAD}
& 1e-01 & 600 & $(2.82,7.02)$ & 5.02 & 8.04e-07 \\
& 1e-03 & 600 & $(1.15,5.44)$ & 5.04 & 1.07e-08 \\
& 1e-05 & 600 & $(1.46,6.16)$ & 5.05 & 5.33e-09 \\
& 1e-07 & 600 & $(1.31,6.19)$ & 5.04 & 3.63e-09 \\
\hline
\end{tabular}
\end{table}
\par

\textbf{Case 2: Nonlinear PDE with a sharp layer}
\par
In this case, the PDAD-ST and DDAD-ST methods are applied to the nonlinear equation~\eqref{eq:2d_non_para_lg} on $\Omega=(0,1)^2$
\begin{equation}\label{eq:2d_non_para_lg}
\begin{cases}
-u_{xx}-u_{yy}+u^2 = f(x,y), & (x,y)\in\Omega,\\[2pt]
u(x,y) = g(x,y),           & (x,y)\in\partial\Omega,
\end{cases}
\end{equation}
with the exact solution
\begin{equation}\label{eq:2d_sharp}
\begin{cases}
u(x,y)= \tanh\!\left( \dfrac{r_0 - \sqrt{(x - 0.5)^2 + (y - 0.5)^2}}{\epsilon} \right),\\
r_0=0.5,\quad \epsilon=0.04.
\end{cases}
\end{equation}
\par
This example features a sharp internal layer, which makes it suitable for
testing the effectiveness of the layer-growth strategy.
\par
To assess the performance of the adaptive layer growth strategy (LG), we keep the parameters in the first hidden layer fixed and construct a second hidden layer
consisting of nonlinear local basis functions. The parameters of these local
basis functions are determined based on points with large residuals combined
with adaptive training. The corresponding numerical results are summarized in
Table~\ref{tab:2d_sharp}, and representative solution plots are shown
in Fig.~\ref{fig.case3}.

\begin{table}[!ht]
\centering
\caption{Comparison of the  PDAD-ST and DDAD-ST for the sharp-layer
problem \eqref{eq:2d_sharp}. The table reports the parameters $\bm{p}=(r_x,r_y)$, computational time,
and $\ell_2$ errors, together with the layer-growth results using $m_2=300$.}
    \label{tab:2d_sharp}
\setlength{\tabcolsep}{10pt} 
\begin{tabular}{cccccccc}
\hline
& $m_1$ & $\bm{p}=(r_x,r_y)$ & time (s) & $\ell_2$ error & $r_2$ &LG time (s) &LG $\ell_2$ error \\ \hline

\multirow{2}{*}{PDAD-ST} & 2000 & $(18.36,22.95)$ &29.06&1.92e-03&2.43&38.67&3.13e-05 \\ 
& 4000 & $(24.49, 28.50)$ &153.32&5.85e-05&4.45& 184.75 & 3.39e-06 \\ \hline
\multirow{2}{*}{DDAD-ST} & 2000 & $(17.67,17.41)$ & 11.64 & 2.00e-03&1.22&21.21&1.96e-04 \\ 
& 4000 & $(33.82, 31.40)$ & 49.76 & 9.66e-05 &2.95&84.95&4.88e-06 \\ \hline
\end{tabular}
\end{table}

\begin{figure}[!ht]
	\centering
	\includegraphics[width=0.3\textwidth]{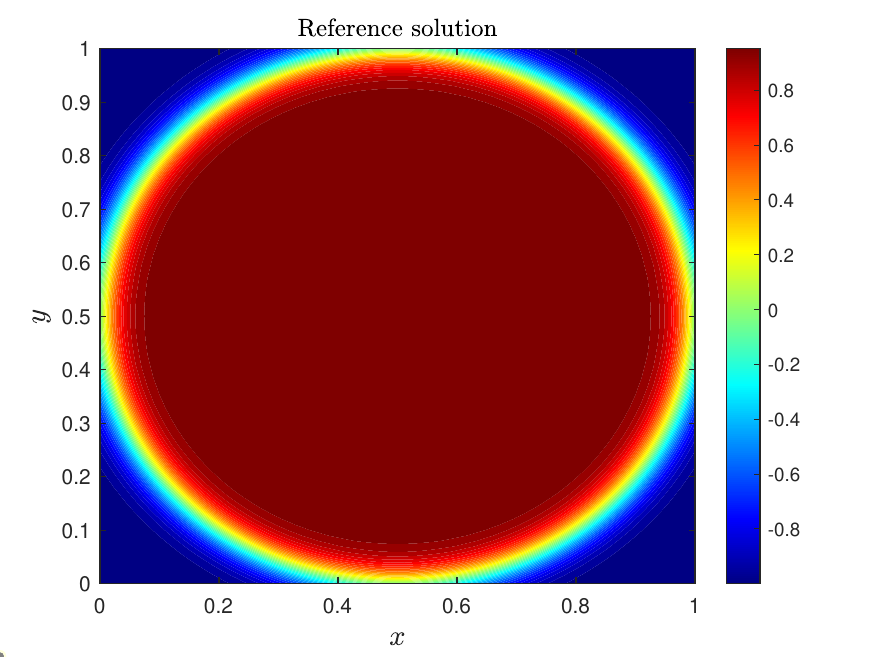}
	\includegraphics[width=0.3\textwidth]{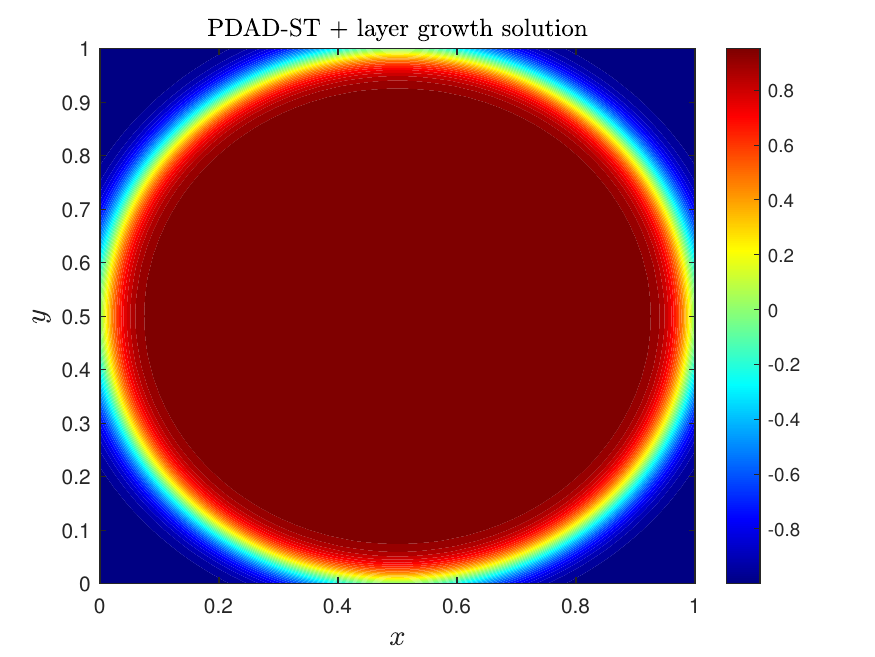}
     \includegraphics[width=0.3\textwidth]{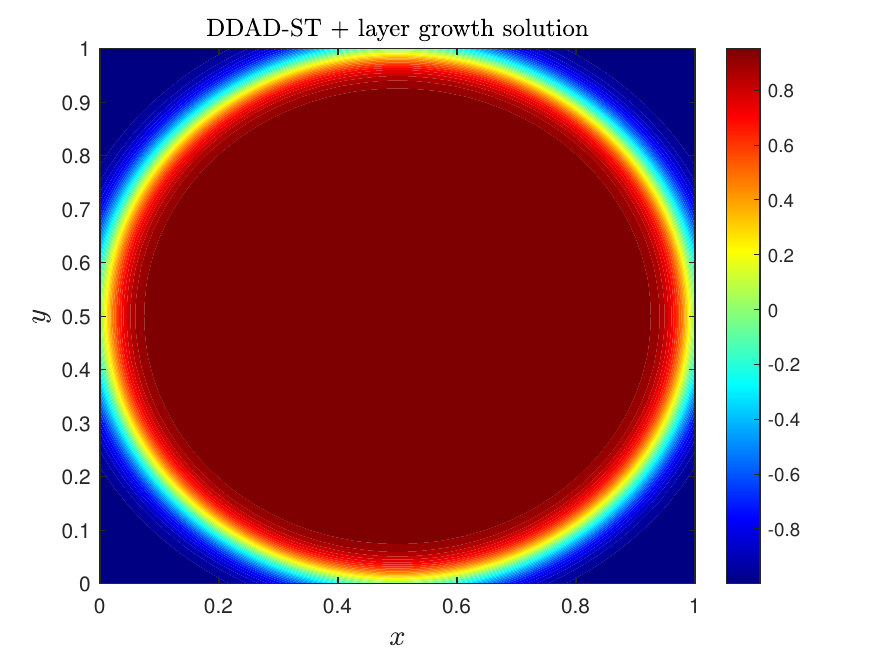}  
	\caption{Reference solution and numerical solutions for the sharp-layer problem~\eqref{eq:2d_sharp}. From left to right: reference solution, PDAD-ST solution, and DDAD-ST solution with $m_1=4000$.}
	\label{fig.case3}
\end{figure}
\par
\textbf{Case 3: Two-dimensional time-dependent nonlinear equation with a large gradient}
\par
In this time-dependent nonlinear example, we employ IMEX-BDF2, treating the
nonlinear term using the previously computed solution, and apply PDAD-DT and
DDAD-DT directly on the domain
\[
\Omega\times(0,T] = (0,1)\times(-1,1)\times(0,1.5].
\]
\par
 We consider the following nonlinear problem with a large
spatio-temporal gradient:
\begin{equation}\label{eq:td_non}
\begin{cases}
u_{t}-\Delta u+u^2 = f(x,y,t), & (x,y,t)\in\Omega\times(0,T],\\[2pt]
u(x,y,t) = g(x,y,t),           & (x,y,t)\in\partial\Omega\times(0,T],\\[2pt]
u(x,y,0) = h(x,y),           & (x,y)\in \Omega,
\end{cases}
\end{equation}
where the exact solution is given by
\begin{equation}
u(x,y,t) = \bigl(1 - y^2\bigr)
           \exp\!\left( \frac{1}{(2x - 1)^2 + e^{-t}} \right).
\end{equation}
\par
The results for PDAD-DT and DDAD-DT are reported in
Table~\ref{tab:td_non}. The column 'order' denotes the observed convergence
order, which is close to $2$ in all refinements and thus confirms the
second-order accuracy of the IMEX-BDF2 scheme. Due to the large temporal range,
a single fixed parameter vector $\bm{p}$ is generally not suitable for all time
steps. Therefore, an adaptive evolution of $\bm{p}$ in time is essential. The
evolution of $\bm{p}=(r_x,r_y)$ for PDAD-DT and DDAD-DT at different time step
numbers $N_t=100,200,400,800$ is illustrated in Fig.~\ref{fig.TD-evo-p}. To reduce the training cost, we set $m_{\lambda}=600$.
\par
As shown in Table~\ref{tab:td_non}, both PDAD-DT and DDAD-DT achieve an observed
temporal convergence order close to $2$ under successive time-step refinement.
This indicates that the proposed adaptive distribution strategies do not destroy the empirical second-order temporal accuracy of the underlying IMEX-BDF2 scheme.
Moreover, the nearly identical errors obtained by PDAD-DT and DDAD-DT indicate
that adaptive parameter evolution in time does not compromise numerical accuracy.

\begin{figure}[!ht]
	\centering
	\includegraphics[width=0.24\textwidth]{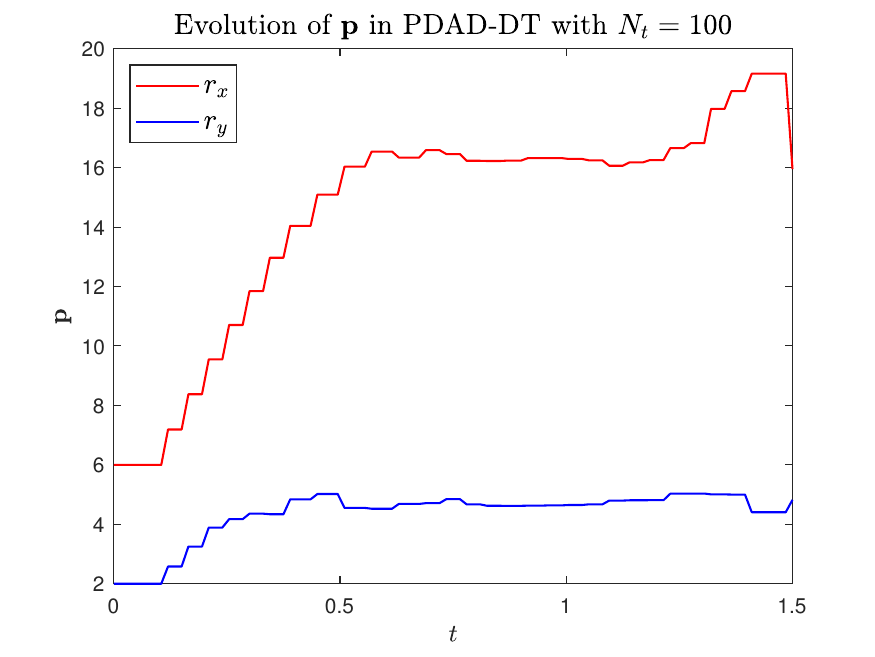}
	\includegraphics[width=0.24\textwidth]{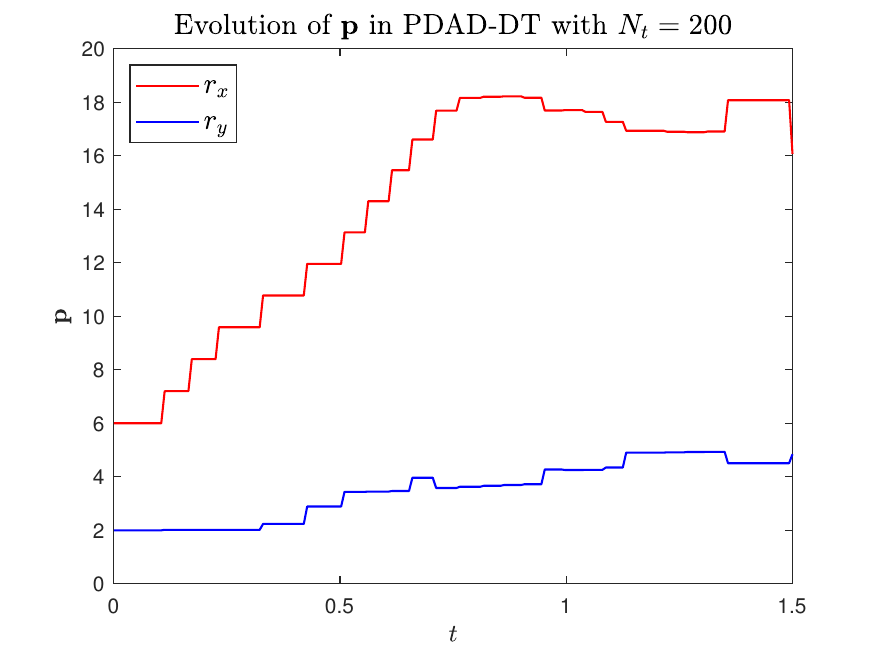}
	\includegraphics[width=0.24\textwidth]{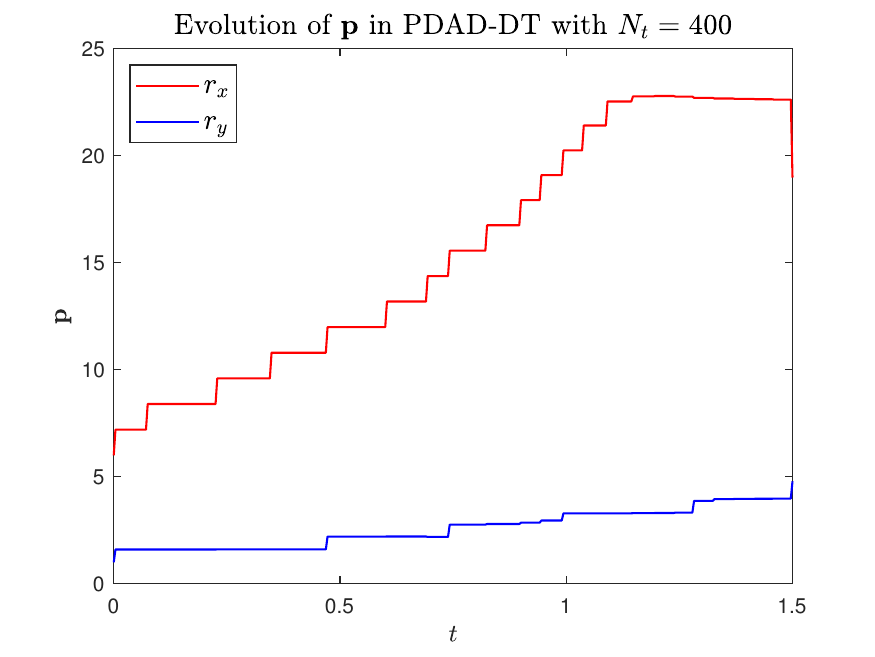}
     \includegraphics[width=0.24\textwidth]{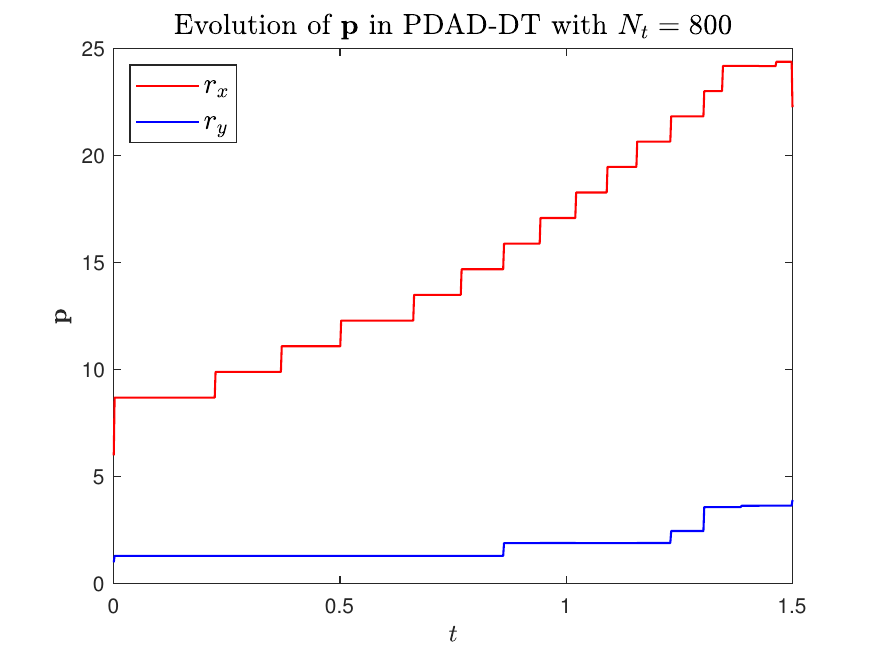}  \\
	\includegraphics[width=0.24\textwidth]{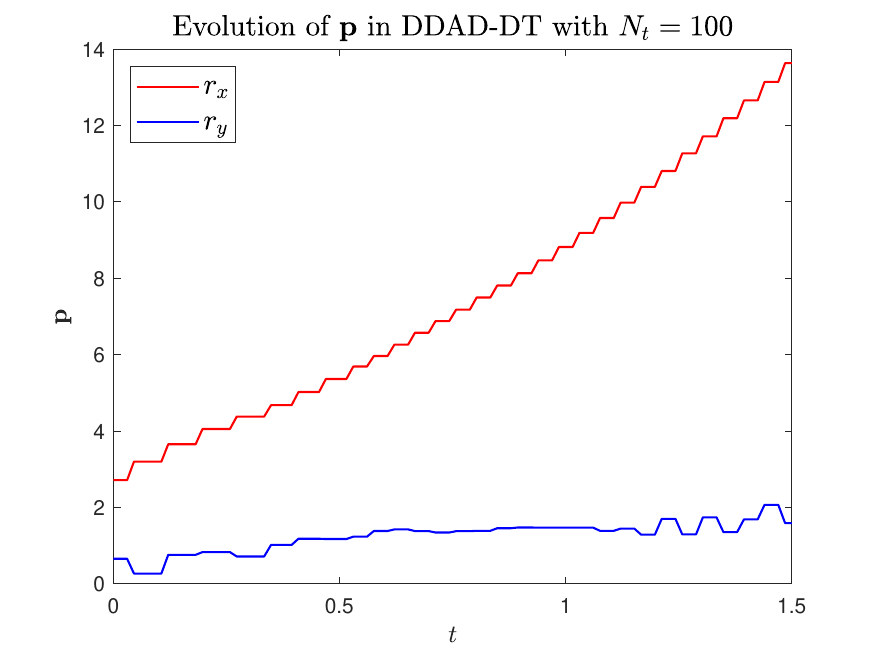}
	\includegraphics[width=0.24\textwidth]{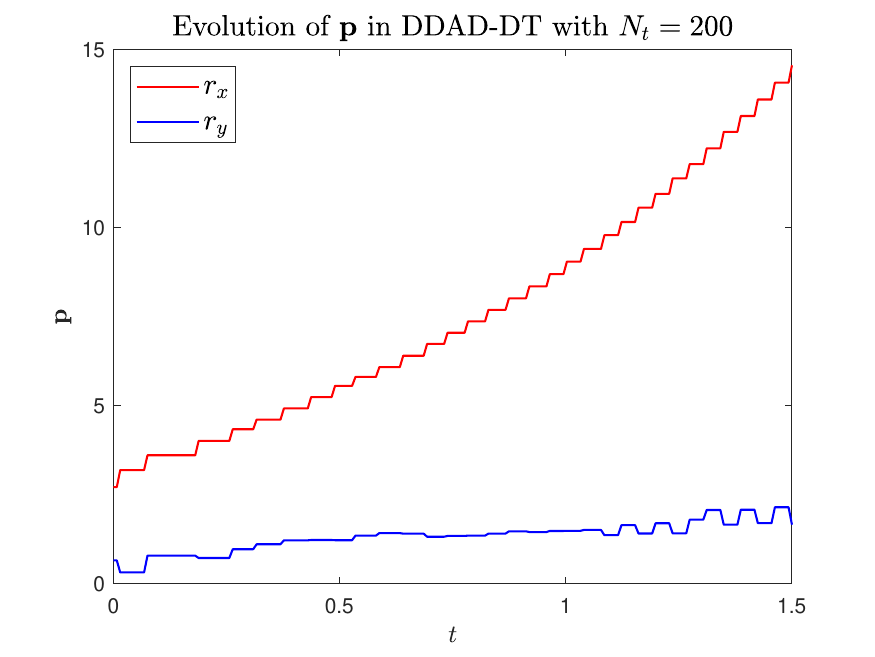}
	\includegraphics[width=0.24\textwidth]{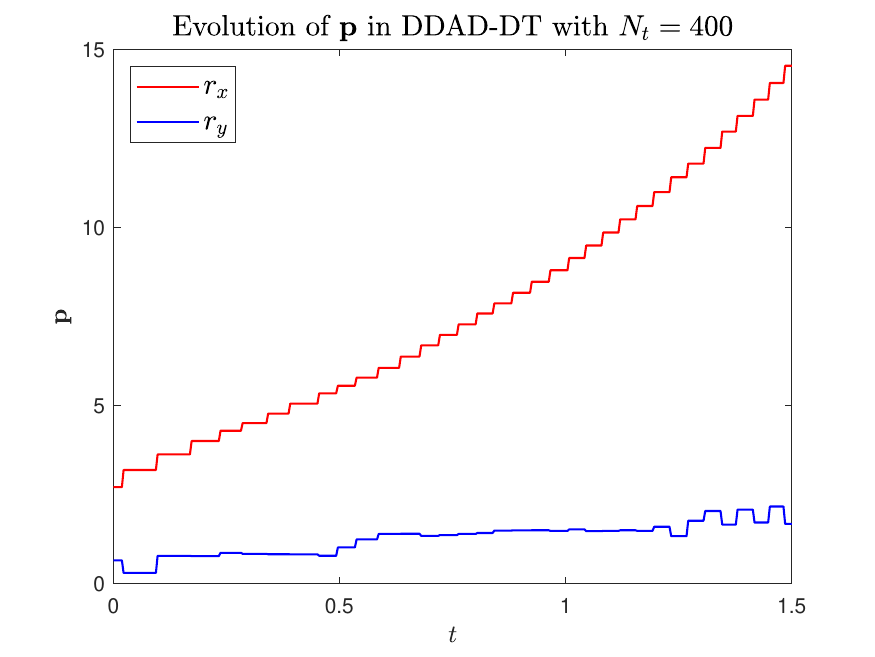}
    \includegraphics[width=0.24\textwidth]{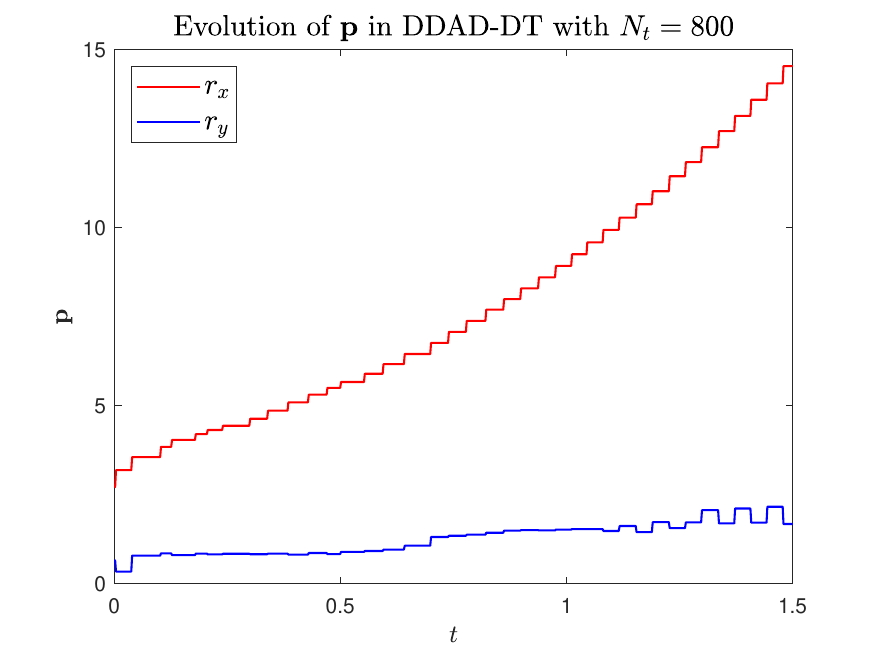}
	\caption{ Evolution of $\bm p=(r_x,r_y)$ for PDAD-DT (top) and DDAD-DT (bottom) with $N_t=100,200,400,800$.}
	\label{fig.TD-evo-p}
\end{figure}

\begin{table}[!ht]
\centering
\caption{Comparison of the PDAD-DT and DDAD-DT methods for the time-dependent
nonlinear equation~\eqref{eq:td_non} with $m_1=1200$. The table reports the runtime, $\ell_2$ errors, and convergence orders under time-step refinement.}
    \label{tab:td_non}
\setlength{\tabcolsep}{18pt} 
\begin{tabular}{ccccc}
\hline
& $N_t$ &  time (s) & $\ell_2$ error & order\\ \hline
\multirow{4}{*}{PDAD-DT} & 100 & 59.68 & 5.61e-03 & -\\ 
& 200& 65.31& 1.48e-03 & 1.92\\
 & 400& 74.01 & 3.84e-04 & 1.95\\ 
& 800 & 103.32 & 9.78e-05 & 1.97 \\ \hline
\multirow{4}{*}{DDAD-DT} & 100 &  11.98 & 5.60e-03& - \\ 
& 200& 20.16 & 1.48e-03 & 1.92\\
& 400 & 36.27 & 3.84e-04  &1.95 \\ 
& 800 & 70.59 & 9.78e-05  & 1.97 \\ \hline
\end{tabular}
\end{table}

\subsection{Helmholtz equation}

The Helmholtz equation is an elliptic partial differential equation that
arises in many applications \cite{wang2024ApracticalPINN,urban2025Unveiling}. In this section, we use it to evaluate
the performance of the proposed adaptive methods. The model problem is
\begin{equation}\label{eq:Helmholtz}
\begin{cases}
\Delta u(x,y) + k^{2} u(x,y) = q(x,y), & (x,y)\in\Omega=(-1,1)^2,\\[2pt]
u(x,y)=h(x,y), & (x,y)\in\partial\Omega.
\end{cases}
\end{equation}
We take $k=1$ and prescribe the exact solution
\begin{equation}
u(x,y)=\sin(a_{1}\pi x)\sin(a_{2}\pi y),
\end{equation}
from which the source term $q$ and boundary condition $h$ are obtained analytically.
\par
Two parameter sets $(a_1,a_2)$ are considered: $(6,6)$ and $(1,20)$. For the case
$(6,6)$, a uniform training grid of size $121\times121$ is used, whereas for the
case $(1,20)$, we employ a highly anisotropic grid of size $66\times300$ to
resolve the strong oscillations in the $y$-direction. The learning rate is set to
$1$, and the maximum number of iterations is $15$. To reduce the training cost, we set $m_{\lambda}=1000$.
\par
Table~\ref{tab:Helmholtz} reports the performance of the Fre-based and PDAD
methods, including the number of neurons, the trained parameters
$\bm{p}=(r_x,r_y)$, and the resulting accuracy. For $(a_1,a_2)=(6,6)$, PDAD
converges from the initial value $\bm{p}=(12.00,12.00)$ to the optimized
parameters $\bm{p}=(6.01,5.98)$ and achieves an $\ell_2$ error of $1.48\times 10^{-9}$.
For $(a_1,a_2)=(1,20)$, the method adjusts the parameters from
$\bm{p}=(6.00,36.00)$ to $\bm{p}=(1.27,31.91)$, reaching an error level of 
$2.61\times 10^{-9}$. Fig.~\ref{fig:Helmholtz} shows the PDAD solutions obtained with $m_1=3000$ neurons.

\begin{table}[!ht]
\centering
\caption{Comparison of Fre-based and PDAD for the equation \eqref{eq:Helmholtz}: optimized parameters $\bm{p}$, runtime, and $\ell_2$ errors.}
\label{tab:Helmholtz}
\setlength{\tabcolsep}{16pt} 
\begin{tabular}{cccccc}
\hline
$(a_1, a_2)$ & Method & $m_1$ & $\bm{p}=(r_x,r_y)$ & time (s) & $\ell_2$ error \\
\hline
\multirow{4}{*}{(6, 6)} 
    & \multirow{2}{*}{Fre-based} & 1500 & $(12.00,12.00)$ & 1.36 & 3.81e-03 \\
    &                            & 3000 & $(12.00,12.00)$ & 4.20 & 2.21e-07 \\
\cline{2-6}
    & \multirow{2}{*}{PDAD}   & 1500 & $(6.01,5.98)$   & 24.61 & 2.19e-06 \\
    &                            & 3000 & $(6.01,5.98)$  & 25.69 & 1.48e-09 \\
\hline
\multirow{4}{*}{(1, 20)} 
    & \multirow{2}{*}{Fre-based} & 1500 & $(6.00,36.00)$  & 2.98  & 1.82e-01 \\
    &                            & 3000 & $(6.00,36.00)$  & 8.80  & 3.01e-04 \\
\cline{2-6}
    & \multirow{2}{*}{PDAD}   & 1500 & $(1.27,31.91)$  & 37.40 & 2.00e-05 \\
    &                            & 3000 & $(1.27,31.91)$  & 39.48& 2.61e-09 \\
\hline
\end{tabular}
\end{table}

\begin{figure}[!ht]
	\centering
	\includegraphics[width=0.45\textwidth]{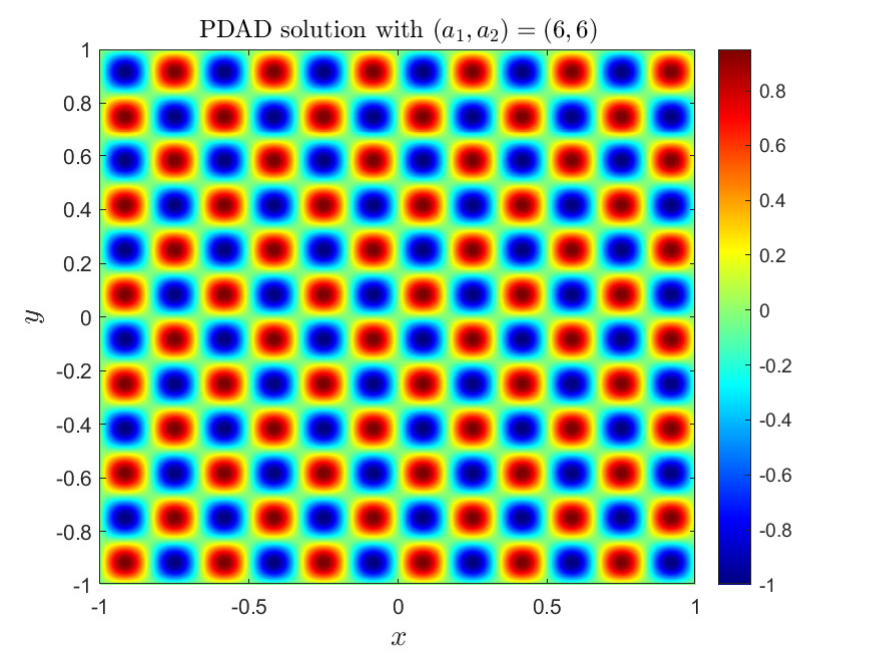}
	\includegraphics[width=0.45\textwidth]{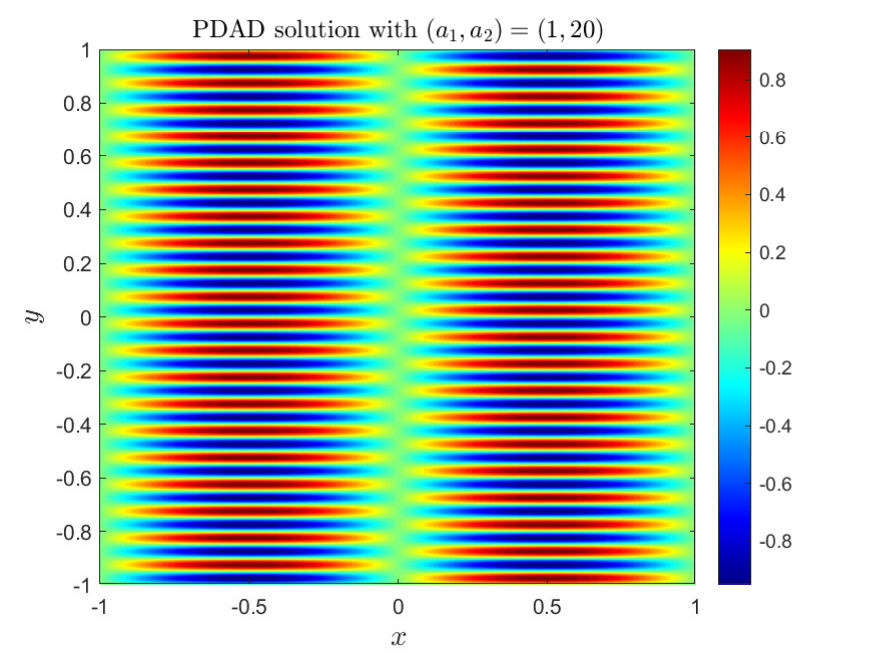}

	\caption{Numerical solutions of~\eqref{eq:Helmholtz} obtained by the PDAD method with $(a_1,a_2)=(6,6)$ (left) and $(a_1,a_2)=(1,20)$ (right). Here $m_1=3000$ and $m_\lambda=1000$.}
	\label{fig:Helmholtz}
\end{figure}

\subsection{Burgers' equation}
Burgers' equation is an important partial differential equation that captures the
interaction between nonlinear advection and viscous diffusion, and serves as a
simplified model in many fluid-mechanical applications. To assess the
performance of the proposed adaptive methods, we consider both one- and
two-dimensional Burgers' equations.
\par
\textbf{Case 1: One-dimensional Burgers' equation.}
\begin{equation}\label{eq:Burgers-1D}
\begin{cases}
u_t + u\,u_x - \dfrac{0.01}{\pi}\,u_{xx} = 0, & x\in(-1,1),\ t\in(0,1],\\[3pt]
u(x,0) = -\sin(\pi x),\\[2pt]
u(-1,t)=u(1,t)=0.
\end{cases}
\end{equation}
\par
 It is evident that the spatial structure of the Burgers' solution evolves significantly over time, which necessitates the use of time-adaptive distribution parameters $\bm{p}$. We employ PDAD-DT and DDAD-DT
with time-step numbers $N_t = 125, 250, 500$, and $1000$. Each experiment uses a
fixed learning rate of $0.5$, $15$ training iterations, $1500$ uniformly
sampled collocation points, and $700$ neurons.
\par
A variety of neural-network-based solvers have been reported for this 1D Burgers benchmark. 
As reported in the literature, SA-PINN achieves an $\ell_2$ error of $1.01\times 10^{-4}$, 
RAD-PINN achieves $2\times 10^{-4}$, 
and gPINN+RAR achieves $2\times 10^{-3}$ (\cite{mcClenny2023Selfadaptive,wu2023Acomprehensive,yu2022Gradientenhanced}). 
By comparison, our DDAD-DT attains an $\ell_2$ error of $5.85\times 10^{-6}$ with a runtime of $45.48$~s. 
Table~\ref{tab:Burgers1d-DT} indicates that the discrete-time framework substantially reduces the computational cost 
while achieving an empirical temporal convergence rate close to the theoretical second-order accuracy of the adopted semi-implicit scheme. 
The temporal evolution of the parameter $r_x$ for PDAD-DT and DDAD-DT is shown in Fig.~\ref{fig.bg1d-DT-p}; 
$r_x$ increases over time, which is consistent with the progressive steepening of the solution profile.

\begin{table}[!ht]
\centering
\caption{Comparison of the PDAD-DT and DDAD-DT methods for Burgers' equation~\eqref{eq:Burgers-1D} with $m_1=700$. The table reports the runtime, $\ell_2$ errors, and convergence orders under time-step refinement.}
    \label{tab:Burgers1d-DT}
\setlength{\tabcolsep}{18pt} 
\begin{tabular}{ccccc}
\hline
& $N_t$ &  time (s) & $\ell_2$ error & order\\ \hline
\multirow{4}{*}{PDAD-DT} & 125 & 40.19 & 4.56e-04 & -\\ 
& 250& 65.98 & 1.24e-04 & 1.88\\
& 500&  89.10 & 2.94e-05  & 2.08\\ 
& 1000 &  119.60 & 5.72e-06 & 2.36 \\ \hline
\multirow{4}{*}{DDAD-DT} & 125 & 14.97 & 4.60e-04 & -\\ 
& 250&21.29 & 1.25e-04 & 1.88\\
& 500 & 30.08 & 2.93e-05  &2.09 \\ 
& 1000 & 45.48 & 5.85e-06  & 2.32 \\ \hline
\end{tabular}
\end{table}

\begin{figure}[!ht]
	\centering
	\includegraphics[width=0.24\textwidth]{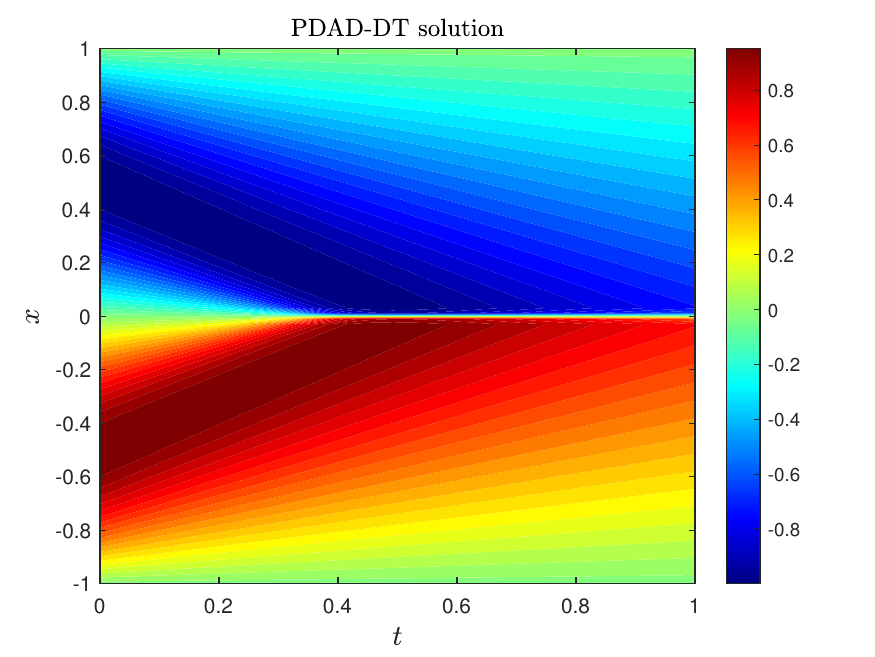}
	\includegraphics[width=0.24\textwidth]{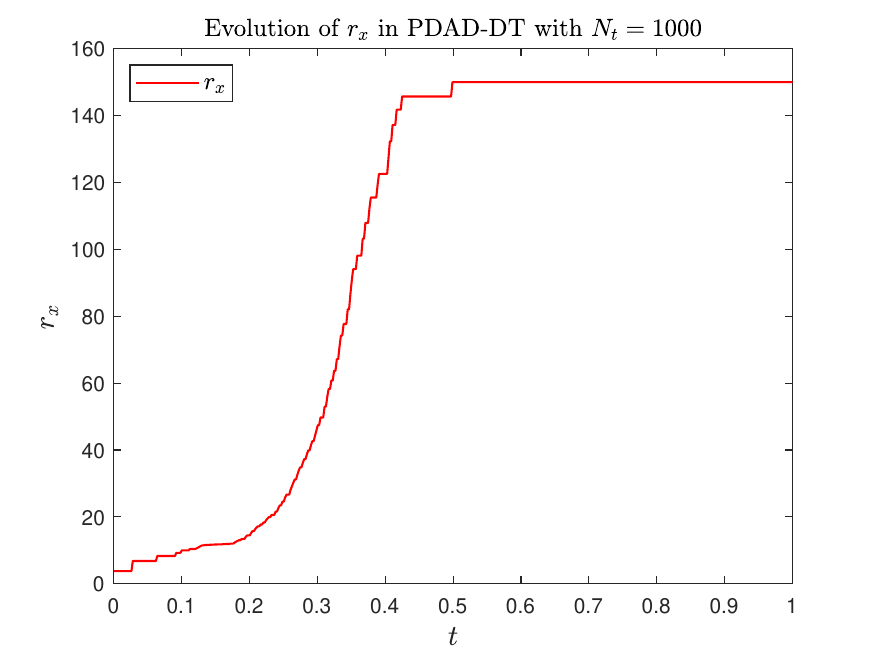}
	\includegraphics[width=0.24\textwidth]{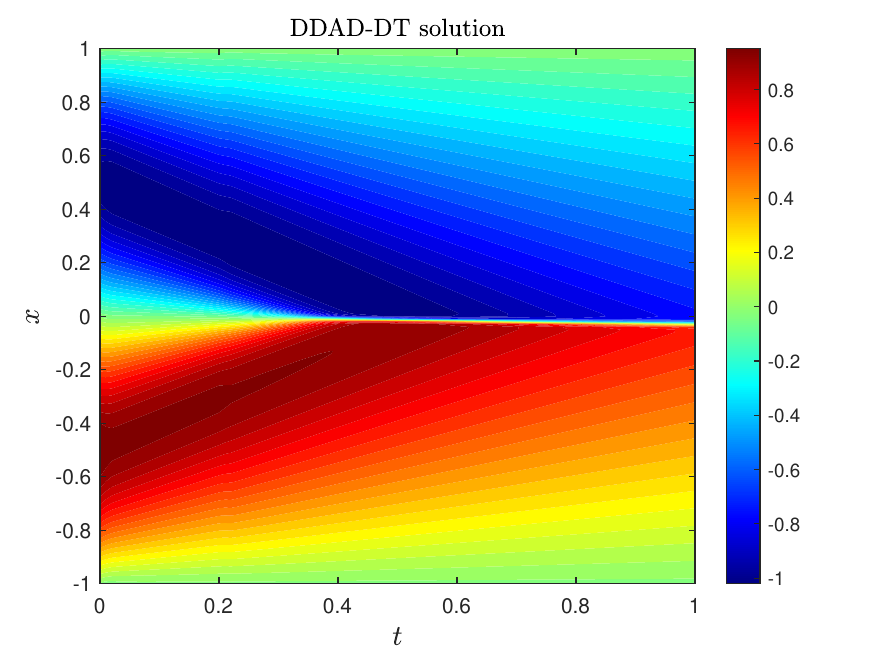}
     \includegraphics[width=0.24\textwidth]{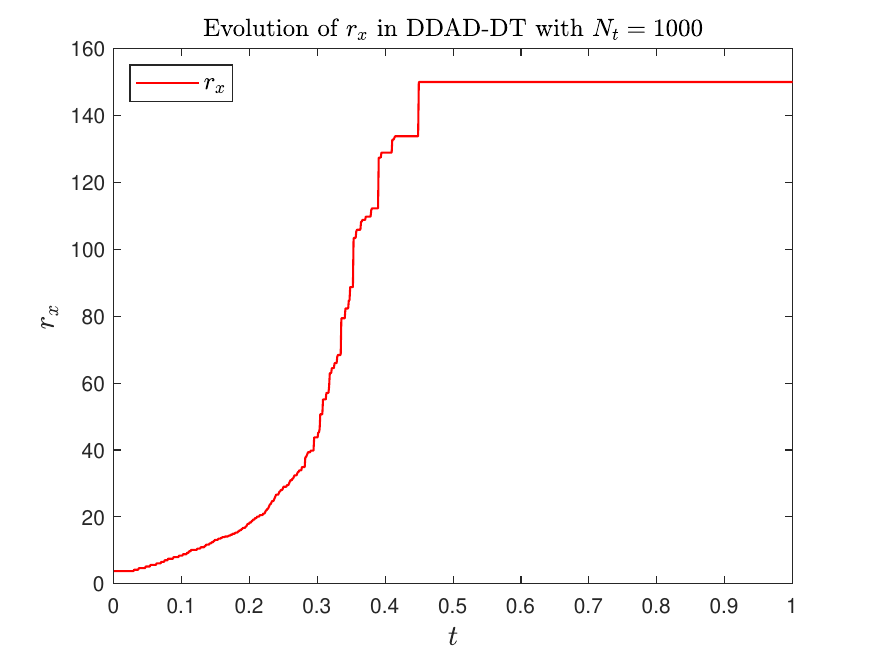}  
	\caption{Results for the 1D Burgers' equation~\eqref{eq:Burgers-1D} computed using PDAD-DT and DDAD-DT with $N_t=1000$.
From left to right, the columns show: the PDAD-DT solution, the optimized parameters $\bm p$ of PDAD-DT, the DDAD-DT solution, and the optimized parameters $\bm p$ of DDAD-DT.}
	\label{fig.bg1d-DT-p}
\end{figure}

\textbf{Case 2: Two-dimensional Burgers' equation.}
\par
We now extend Burgers' equation to two dimensions and consider the problem
\begin{equation}\label{eq:Burgers-2D}
\begin{cases}
u_t + u\,(u_x + u_y) - \varepsilon \Delta u = 0, 
    & (x,y)\in\Omega,\ t\in(0,1], \\[2pt]
u(x,y,t) = g(x,y,t), & (x,y)\in\Gamma_D,\ t\in(0,1], \\[2pt]
u(x,y,0) = h_0(x,y), & (x,y)\in\Omega,
\end{cases}
\end{equation}
where $\Omega=(0,1)^2 \subset \mathbb{R}^2$ and $\Gamma_D=\partial\Omega$.  
The exact solution is
\begin{equation}\label{eq:exact_solution}
u(x,y,t)
= \frac{1}{1+\exp\!\left(\dfrac{x+y-t}{2\varepsilon}\right)},
\end{equation}
which represents a smooth traveling-wave profile that steepens as 
$\varepsilon$ decreases.
\par
The two-dimensional problem was previously studied in~\cite{shang2023randomized} using a
Randomized Neural Network with a Petrov-Galerkin discretization (RaNN-PG),
where an $\ell_2$ error of $5.19\times 10^{-3}$ was reported for $\varepsilon=0.1$.
Here, we solve the same problem using the proposed PDAD-DT and DDAD-DT
frameworks. All experiments use a fixed learning rate of $0.5$, 
$10$ optimization iterations, a training grid of $70\times70$ points, and
$1000$ neurons.
\par
As summarized in Table~\ref{tab:Burgers2d-DT1}, DDAD-DT achieves an 
$\ell_2$ error of $7.04\times 10^{-6}$ with a total runtime of $32.04$~s at 
$\varepsilon=0.1$, representing an improvement of nearly three orders of magnitude compared with
the reported RaNN-PG result.

\begin{table}[!ht]
\centering
\caption{Comparison of the PDAD-DT and DDAD-DT methods for the 2D Burgers' equation~\eqref{eq:Burgers-2D} with $\epsilon=0.1$ and $m_1=1000$. The table reports the runtime, $\ell_2$ errors, and convergence orders under time-step refinement.}
    \label{tab:Burgers2d-DT1}
\setlength{\tabcolsep}{18pt} 
\begin{tabular}{ccccc}
\hline
& $N_t$ & time (s) & $\ell_2$ error & order\\ \hline
\multirow{4}{*}{PDAD-DT} & 50 & 13.47 & 4.57e-04 & -\\ 
& 100& 23.58 & 1.09e-04 & 2.07\\
 & 200& 56.34 & 3.05e-05 & 1.84\\ 
& 400 & 70.78 & 7.06e-06 & 2.11 \\ \hline
\multirow{4}{*}{DDAD-DT} 
& 50 &  5.53& 4.43e-04& - \\ 
& 100 &  8.67 &9.67e-05& 2.20 \\ 
& 200& 18.58 & 2.75e-05& 1.81\\
& 400 & 32.04 &7.04e-06  &1.97  \\ \hline
\end{tabular}
\end{table}

\begin{figure}[!ht]
	\centering
	\includegraphics[width=0.24\textwidth]{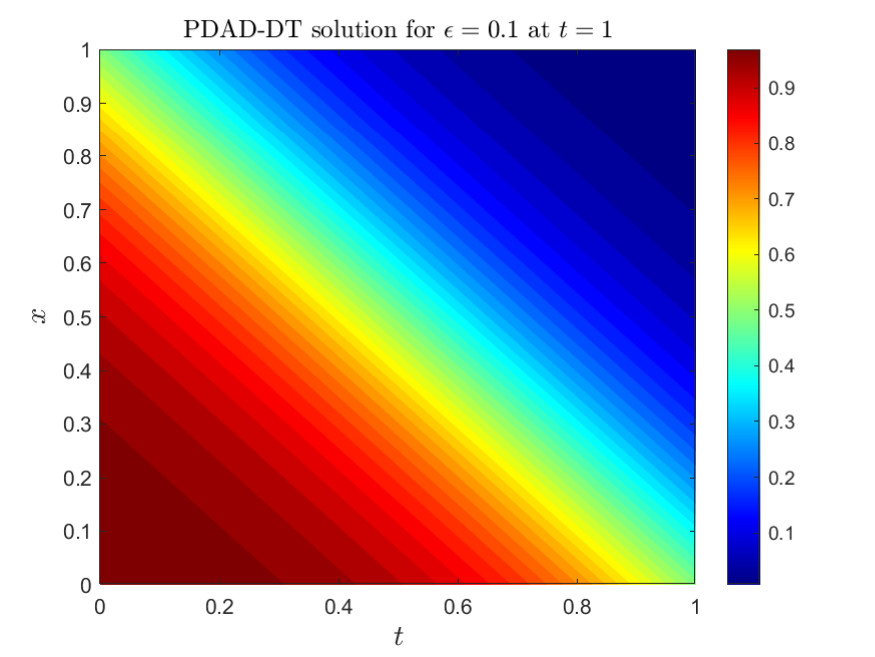}
	\includegraphics[width=0.24\textwidth]{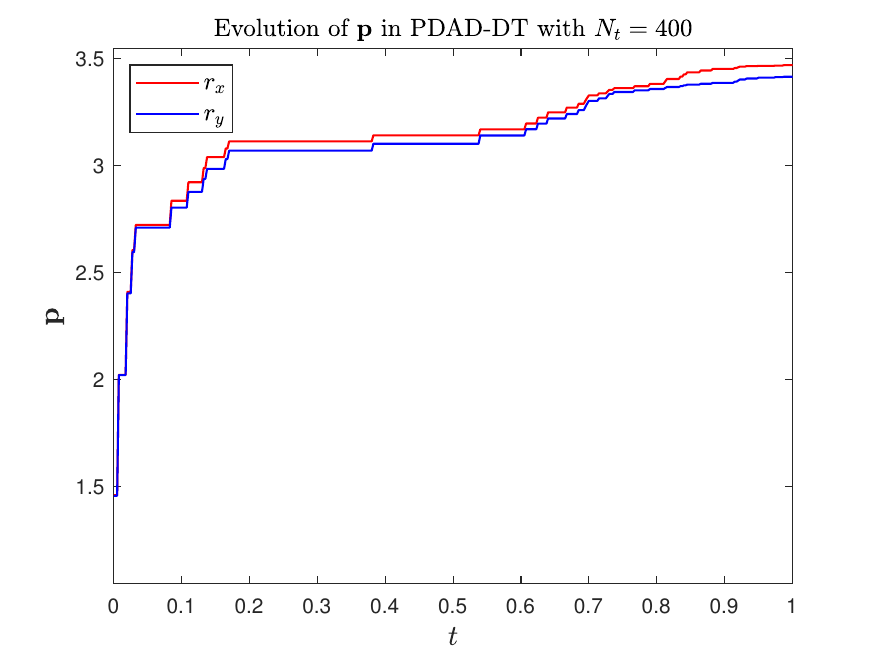}
	\includegraphics[width=0.24\textwidth]{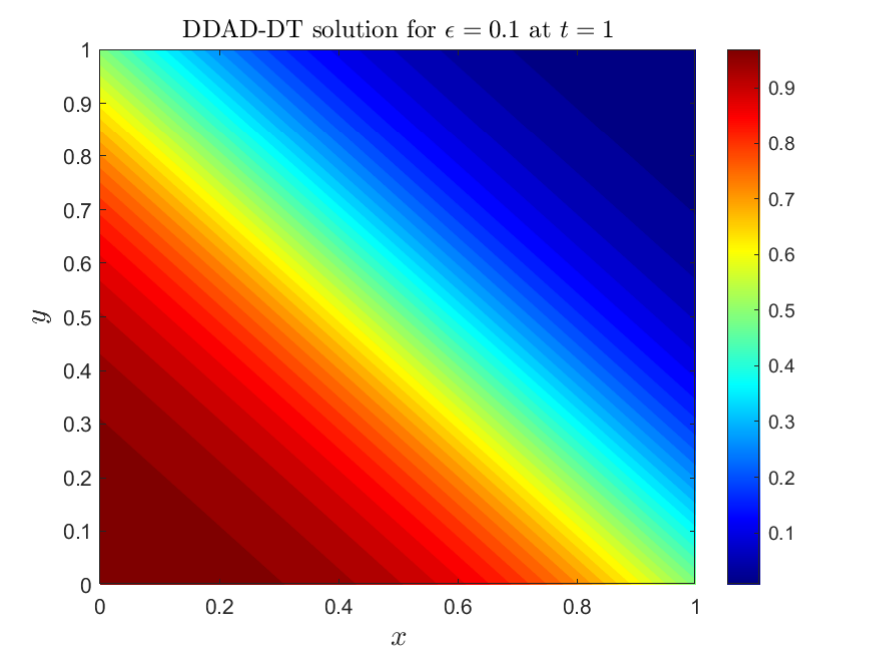}
     \includegraphics[width=0.24\textwidth]{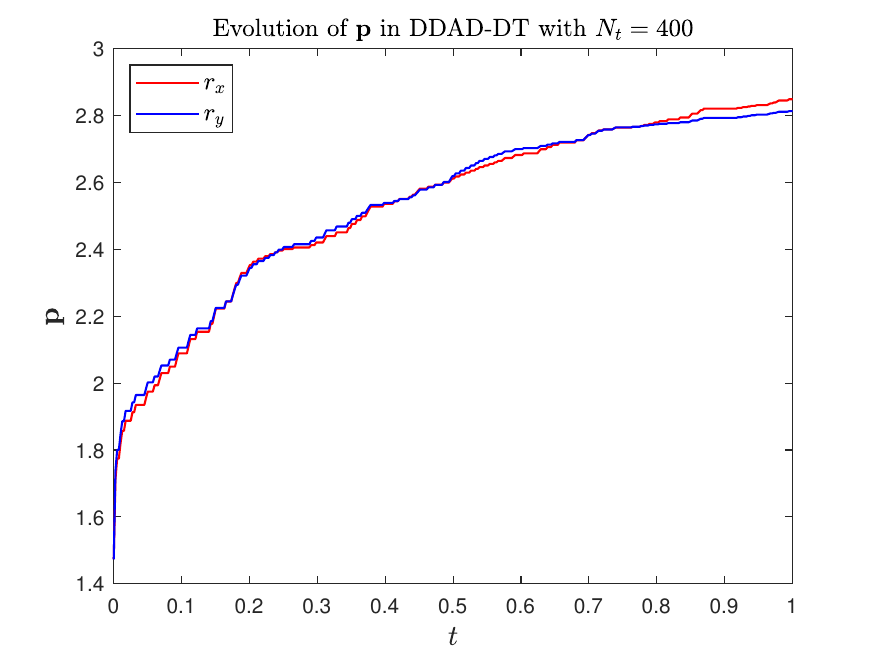}  
	\caption{Results for the 2D Burgers' equation~\eqref{eq:Burgers-2D} with $\varepsilon=0.1$ and $N_t=400$ at $t=1$.
From left to right: the PDAD-DT solution, the optimized parameters $\bm p=(r_x,r_y)$ for PDAD-DT, the DDAD-DT solution, and the optimized parameters $\bm p=(r_x,r_y)$ for DDAD-DT.}
	\label{fig.bg2_0.1}
\end{figure}
\par
To further assess the performance of the discrete-time AD-RaNN, we reduce $\varepsilon$ to $0.01$, which produces a much sharper shock
layer and significantly increases the numerical difficulty of the problem. As
shown in Table~\ref{tab:Burgers2d-DT2}, both PDAD-DT and DDAD-DT exhibit
second-order behavior when $N_t\le 100$. However, as the number of time steps
increases, the methods are unable to maintain the theoretical second-order
accuracy and the convergence order deteriorates.
To address this issue, we employ the layer-growth strategy, which augments the
network with locally adaptive basis functions in regions where the residual is
large (the selected residual points at $t=1$ are shown in the second column of
Fig.~\ref{fig.bg2_0.01}). Although this increases the computational cost, 
 it restores an observed convergence behavior close to second order up to $N_t=400$, 
achieving an $\ell_2$ error of $8.25\times 10^{-4}$. 
The temporal evolution of the learned parameters in the first hidden layer,
$\bm{p}=(r_x,r_y)$, and the second-layer parameter $r_2$ is displayed in the
third and fourth columns of Fig.~\ref{fig.bg2_0.01}, respectively.

\begin{table}[!ht]
\centering
\caption{Comparison of the PDAD-DT and DDAD-DT methods for the 2D Burgers'
equation~\eqref{eq:Burgers-2D} with $\epsilon=0.01$ and $m_1=2000$. The table reports the
$\ell_2$ errors, computational times, and observed convergence orders. 
Results with the layer-growth strategy (LG), using $m_2=300$ additional neurons,
are also included.}
    \label{tab:Burgers2d-DT2}
\setlength{\tabcolsep}{8pt} 
\begin{tabular}{cccccccc}
\hline
& $N_t$ & time (s) & $\ell_2$ error & order & LG time (s) & LG $\ell_2$ error & LG order\\ \hline
\multirow{4}{*}{PDAD-DT} & 50 & 126.90 & 1.15e-01 & -&345.14 &1.16e-01&-\\ 
& 100& 161.25 & 2.46e-02 & 2.22 &559.95 &2.31e-02&2.33\\
 & 200& 198.54 & 7.02e-03 & 1.81&787.02&4.01e-03& 2.53\\ 
& 400 & 298.13 & 2.19e-03 & 1.68&1405.27 & 8.28e-04&2.28\\ \hline
\multirow{4}{*}{DDAD-DT} 
& 50 & 37.64 & 1.13e-01&- & 70.73& 1.17e-01& - \\ 
& 100 &56.80 & 2.04e-02 & 2.47 &  101.71 &2.05e-02& 2.51 \\ 
& 200&106.03& 7.51e-03&1.44 & 438.45 & 3.63e-03& 2.50\\
& 400 &194.47& 5.89e-03  & 0.35 & 991.51 &8.25e-04  &2.14  \\ \hline
\end{tabular}
\end{table}

\begin{figure}[!ht]
	\centering
	\includegraphics[width=0.24\textwidth]{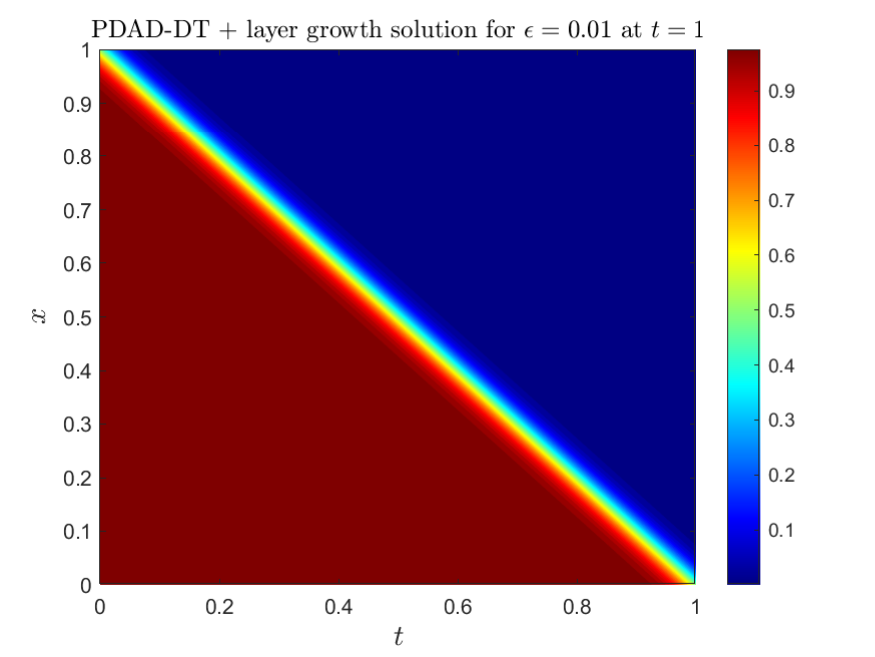}
	\includegraphics[width=0.24\textwidth]{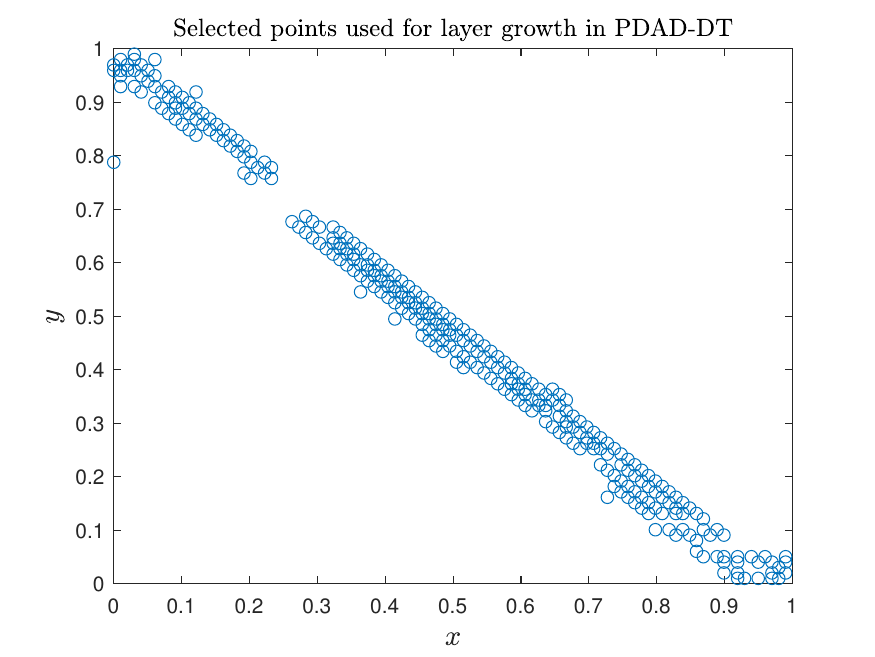}
	\includegraphics[width=0.24\textwidth]{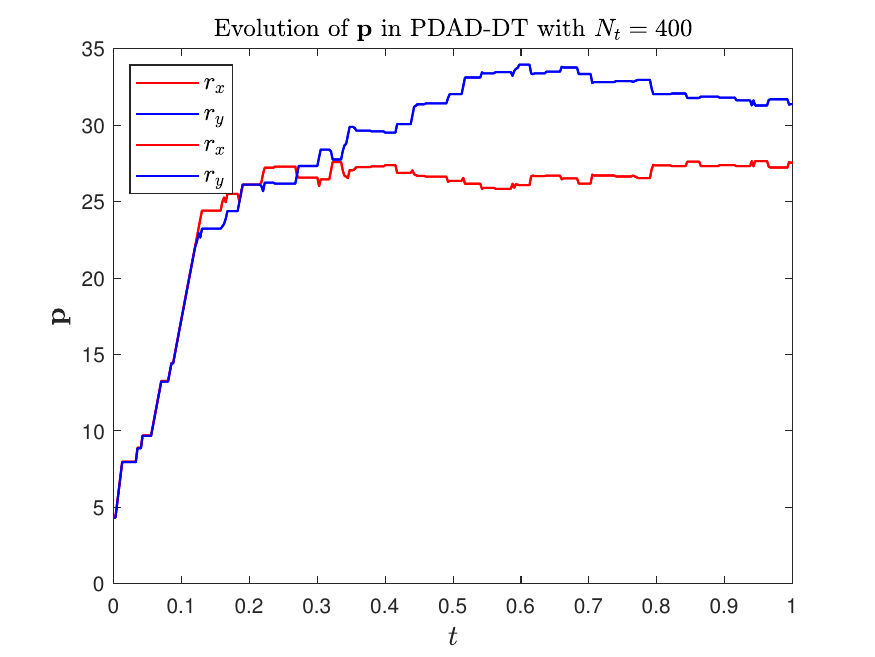}
     \includegraphics[width=0.24\textwidth]{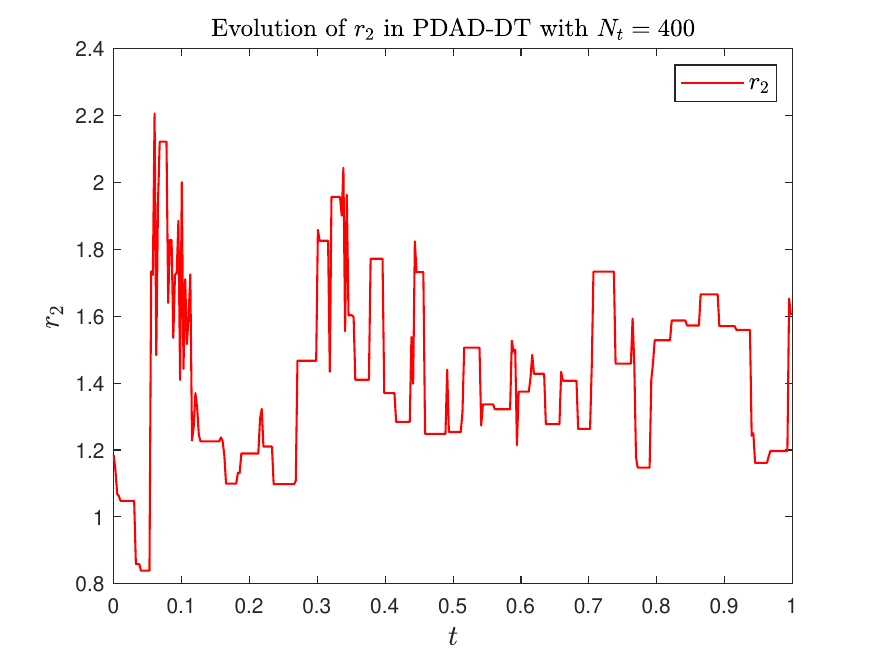}  \\
	\includegraphics[width=0.24\textwidth]{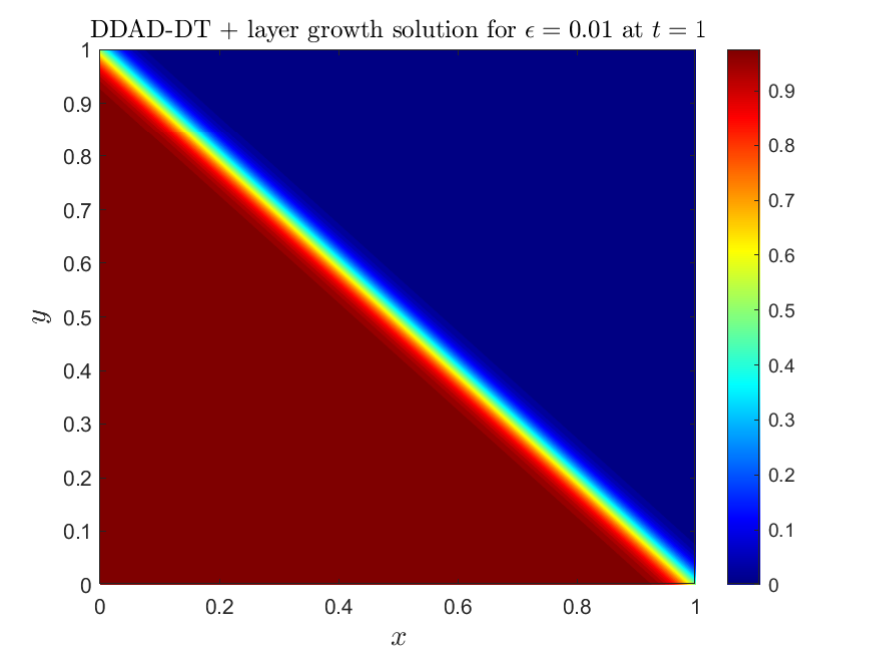}
	\includegraphics[width=0.24\textwidth]{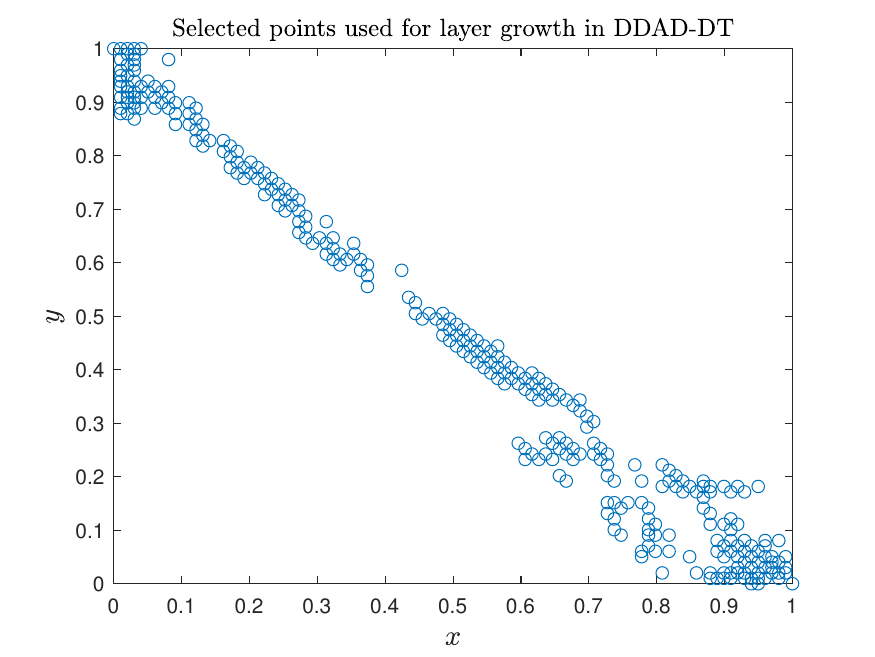}
	\includegraphics[width=0.24\textwidth]{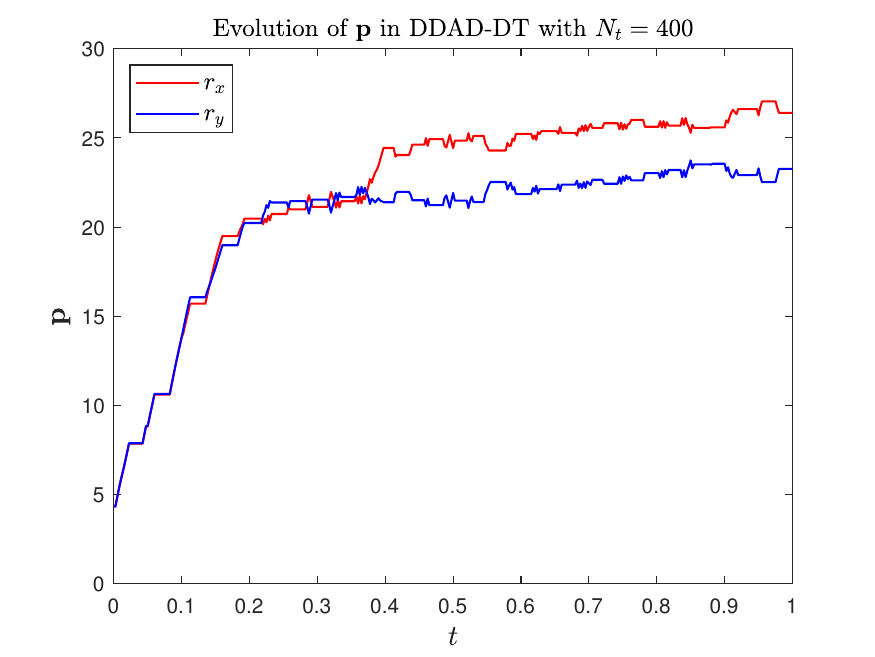}
    \includegraphics[width=0.24\textwidth]{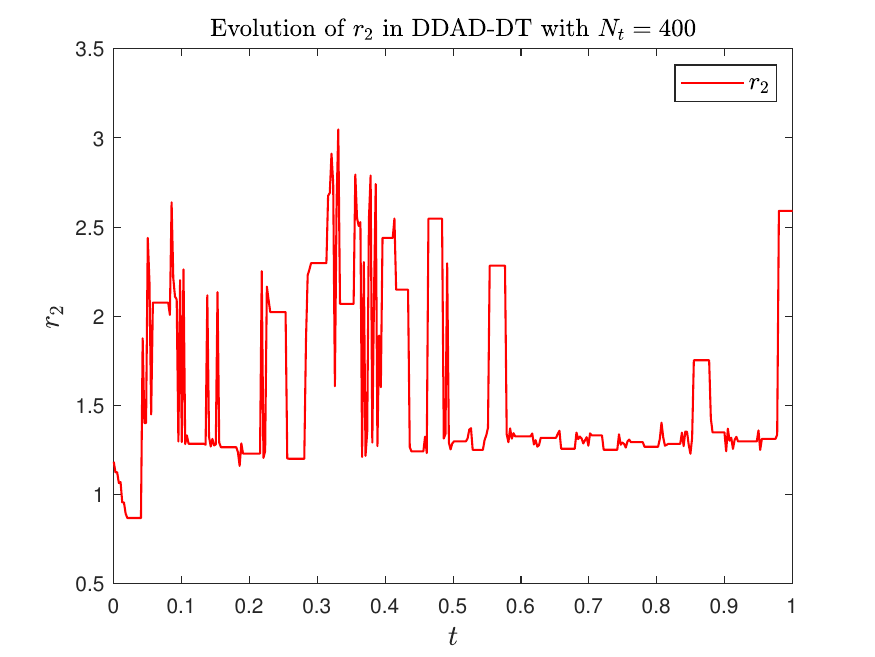}
	\caption{Results for 2D Burgers' equation~\eqref{eq:Burgers-2D} obtained by PDAD-DT (top) and DDAD-DT (bottom) with $\varepsilon=0.01$ and $N_t=400$.
From left to right, the columns show: the numerical solution at $t=1$, the residual points selected for the second layer at $t=1$, the optimized first-layer parameters $\bm p$, and the second-layer parameter $r_2$.
}
	\label{fig.bg2_0.01}
\end{figure}
\subsection{Allen-Cahn equation}
\textbf{Case 1: One-dimensional Allen-Cahn equation}
\par
The Allen-Cahn equation is a nonlinear reaction-diffusion model that describes
phase separation and interface evolution in materials science. We consider the
one-dimensional Allen-Cahn equation with periodic boundary conditions:
\begin{equation}\label{eqn:ac1d}
\begin{cases}
u_t - 0.0001\,u_{xx} + 5(u^3 - u) = 0, 
    & x\in(-1,1),\ t\in(0,1],\\[2pt]
u(x,0) = x^2\cos(\pi x),\\[2pt]
u(-1,t)=u(1,t),\quad u_x(-1,t)=u_x(1,t),
    & t\in(0,1].
\end{cases}
\end{equation}
\par
Since an analytical solution to~\eqref{eqn:ac1d} is not available, a reference
solution is generated using a spectral method with sufficiently fine resolution
to ensure accuracy. This equation is frequently used as a benchmark problem in neural
network-based PDE solvers, such as SA-PINN ($\ell_2$ error $2.1\times 10^{-4}$) and bc-PINN (\cite{mattey2022Anovel}) 
($\ell_2$ error $1.68\times 10^{-4}$).
\par
For the proposed PDAD-DT and DDAD-DT methods, we employ $1000$ training points,
$600$ neurons, a fixed learning rate of $0.5$, and $15$ optimization iterations.
As shown in Table~\ref{tab:AC1D}, DDAD-DT achieves an $\ell_2$ error of
$7.58\times 10^{-6}$ with a training time of $21.21$ seconds. The numerical
solution and the evolution of the distribution parameter $\bm{p}$ are shown in
Fig.~\ref{fig.ac1d}; notably, $\bm{p}$ increases monotonically during time
stepping, exceeding $100$ by the final time, which reflects the sharpening of
the solution profile.

\begin{table}[!ht]
\centering
\caption{Comparison of the PDAD-DT and DDAD-DT methods for the Allen-Cahn equation~\eqref{eqn:ac1d} with $m_1=600$.
The table reports the runtime, $\ell_2$ errors, and convergence orders under time-step refinement.}
    \label{tab:AC1D}
\setlength{\tabcolsep}{18pt} 
\begin{tabular}{ccccc}
\hline
& $N_t$ &  time (s) & $\ell_2$ error & order\\ \hline
\multirow{4}{*}{PDAD-DT} & 125 & 8.27 & 4.49e-04 & -\\ 
& 250& 10.32& 1.14e-04 & 1.98\\
 & 500& 27.13 & 3.05e-05 & 1.90\\ 
& 1000 & 42.36 & 7.92e-06 & 1.95 \\ \hline
\multirow{4}{*}{DDAD-DT} & 125 &  6.05 & 4.49e-04& - \\ 
& 250& 8.53 & 1.13e-04 & 1.99\\
& 500 & 12.69& 2.87e-05  &1.98 \\ 
& 1000 & 21.21 & 7.58e-06  & 1.92 \\ \hline
\end{tabular}
\end{table}

\begin{figure}[!ht]
	\centering
	\includegraphics[width=0.24\textwidth]{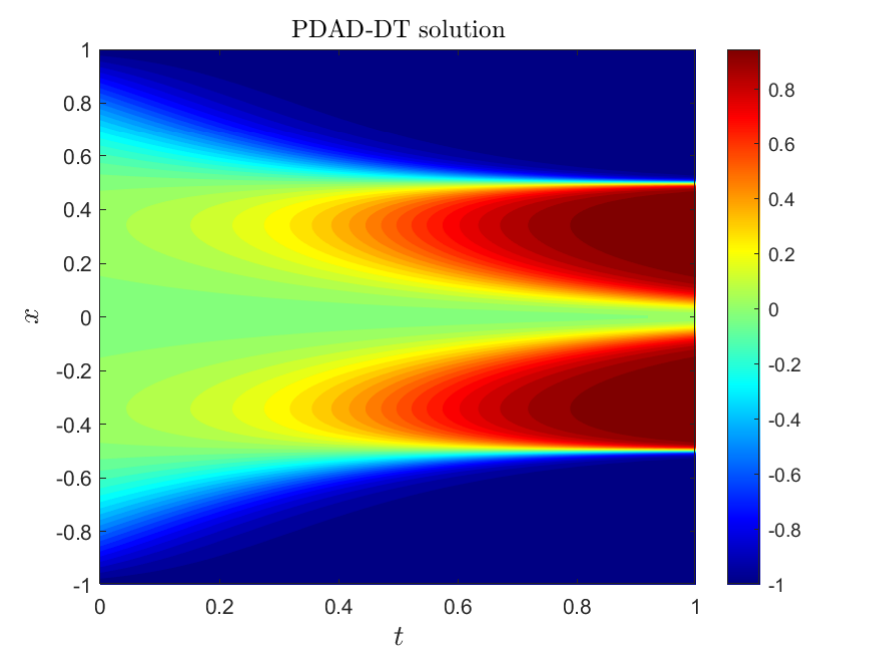}
	\includegraphics[width=0.24\textwidth]{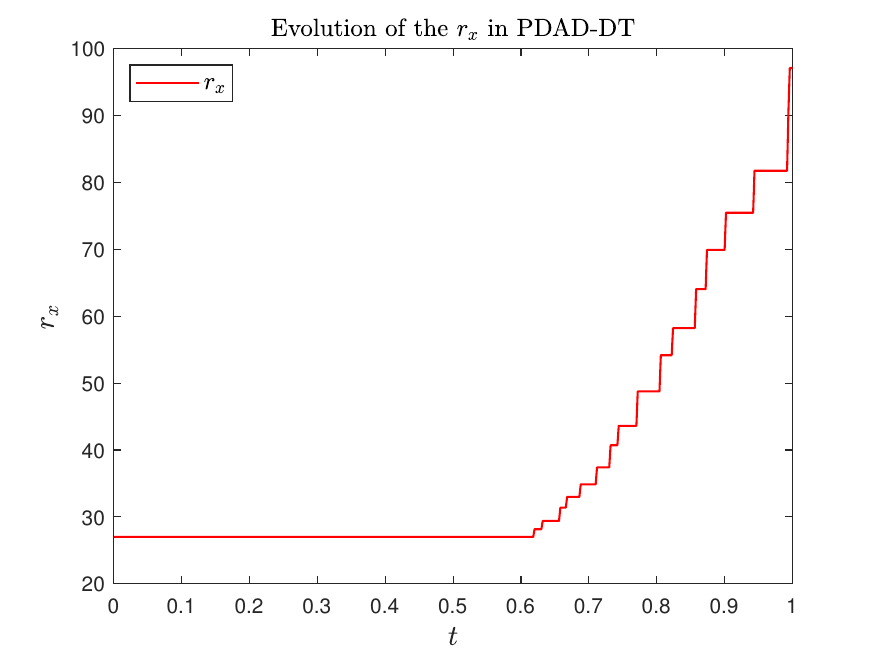}
	\includegraphics[width=0.24\textwidth]{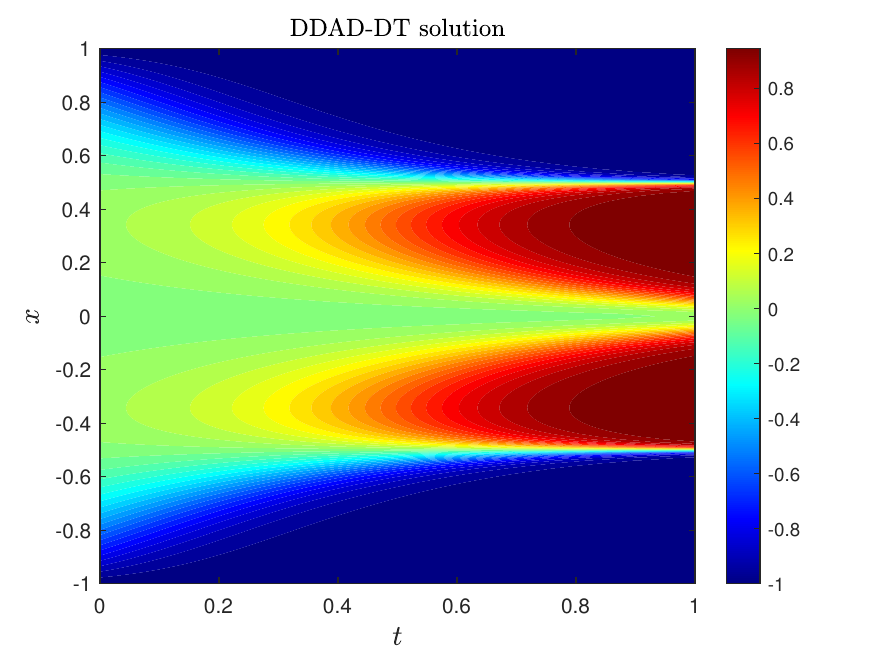}
     \includegraphics[width=0.24\textwidth]{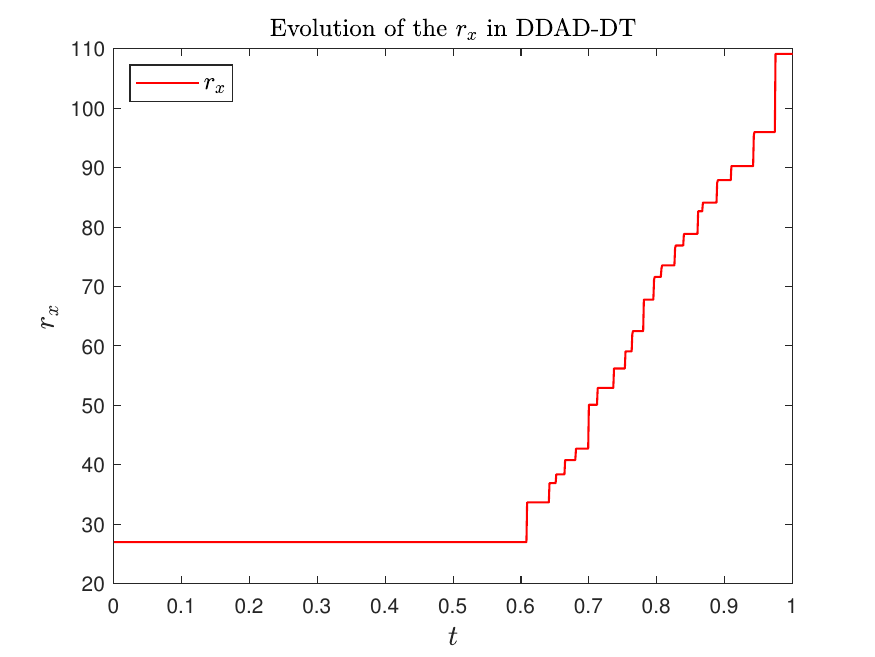}  
	\caption{Results for the Allen-Cahn equation~\eqref{eqn:ac1d} computed using PDAD-DT and DDAD-DT with $N_t=1000$.
From left to right, the columns show: the PDAD-DT solution, the optimized parameters $\bm p$ of PDAD-DT, the DDAD-DT solution, and the optimized parameters $\bm p$ of DDAD-DT.}
	\label{fig.ac1d}
\end{figure}
\par
\textbf{Case 2: Two-dimensional Allen-Cahn equation}
\par
We now extend the Allen-Cahn equation to two dimensions and consider the problem
with Dirichlet boundary conditions:
\begin{equation}\label{eqn:ac2d}
\begin{cases}
u_t - 0.0001\,\Delta u + 2(u^3 - u) = 0, 
    & (x,y)\in(-1,1)^2,\ t\in(0,1],\\[2pt]
u(x,y,0)=\cos\!\left(\tfrac{\pi}{2}x\right)\cos\!\left(\tfrac{\pi}{2}y\right),
    & (x,y)\in\Omega,\\[2pt]
u(x,y,t)=0,
    & (x,y)\in\partial\Omega,\ t\in(0,1].
\end{cases}
\end{equation}
\par
A reference solution is computed using a high-resolution Runge-Kutta method.
For the PDAD-DT and DDAD-DT solvers, we employ an $80\times80$ training grid,
$1000$ neurons, a fixed learning rate of $0.5$, and $10$ optimization
iterations. As shown in Table~\ref{tab:AC2D}, both PDAD-DT and DDAD-DT begin to
lose second-order accuracy when the number of time steps reaches $N_t=400$.
Therefore, the layer-growth strategy is also applied in this case.
\par
With layer growth, the discrete-time AD-RaNN framework  recovers an observed convergence behavior close to second order up to $N_t=400$.
In particular, the DDAD-DT method achieves
an $\ell_2$ error of $4.64\times 10^{-6}$ with a total runtime of $133.97$ seconds.

\begin{table}[!ht]
\centering
\caption{Comparison of the PDAD-DT and DDAD-DT methods for the 2D Allen-Cahn
equation~\eqref{eqn:ac2d} with $m_1=1000$. The table reports the
$\ell_2$ errors, computational times, and observed convergence orders. 
Results with the layer-growth strategy (LG), using $m_2=300$ additional neurons,
are also included.}

    \label{tab:AC2D}
\setlength{\tabcolsep}{8pt} 
\begin{tabular}{cccccccc}
\hline
& $N_t$ &  time (s) & $\ell_2$ error & order & LG time (s) & LG $\ell_2$ error & LG order\\ \hline
\multirow{4}{*}{PDAD-DT} & 50 &   55.14 & 3.14e-04 & -&78.84 &3.06e-04&-\\ 
& 100& 78.55 & 7.54e-05 &2.06 &194.31&7.72e-05&1.99\\
 & 200& 118.10 & 1.76e-05 & 2.10&164.32&1.82e-05& 2.08\\ 
& 400 & 167.31 &1.63e-05 & 0.11&250.18 & 4.81e-06&1.92\\ \hline
\multirow{4}{*}{DDAD-DT} 
& 50 & 10.95 & 2.99e-04&- & 36.16& 3.09e-04& - \\ 
& 100 &16.61 & 7.31e-05 & 2.03 &  50.37 &7.77e-05& 1.99 \\ 
& 200&29.57& 2.90e-05&1.33 & 73.80& 1.92e-05& 2.02\\
& 400 &53.05& 3.19e-05  & -0.14 & 133.97 &4.64e-06  &2.05  \\ \hline
\end{tabular}
\end{table}

\begin{figure}[!ht]
	\centering
	\includegraphics[width=0.24\textwidth]{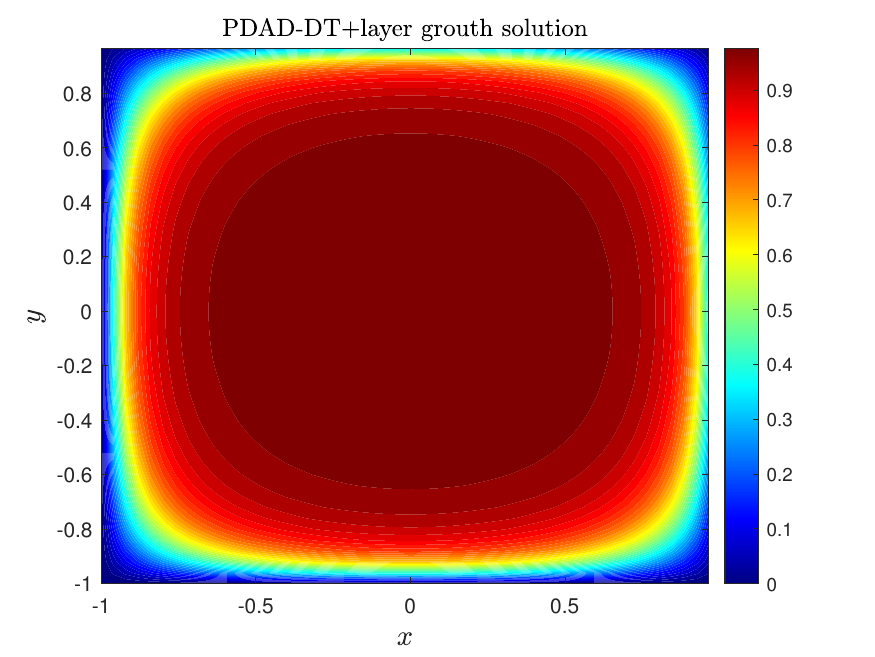}
	\includegraphics[width=0.24\textwidth]{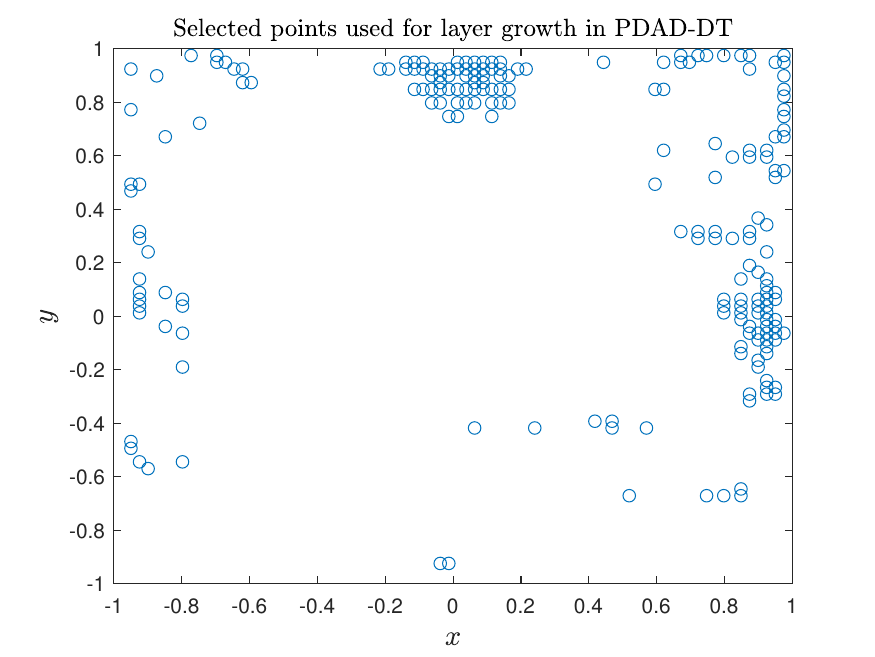}
	\includegraphics[width=0.24\textwidth]{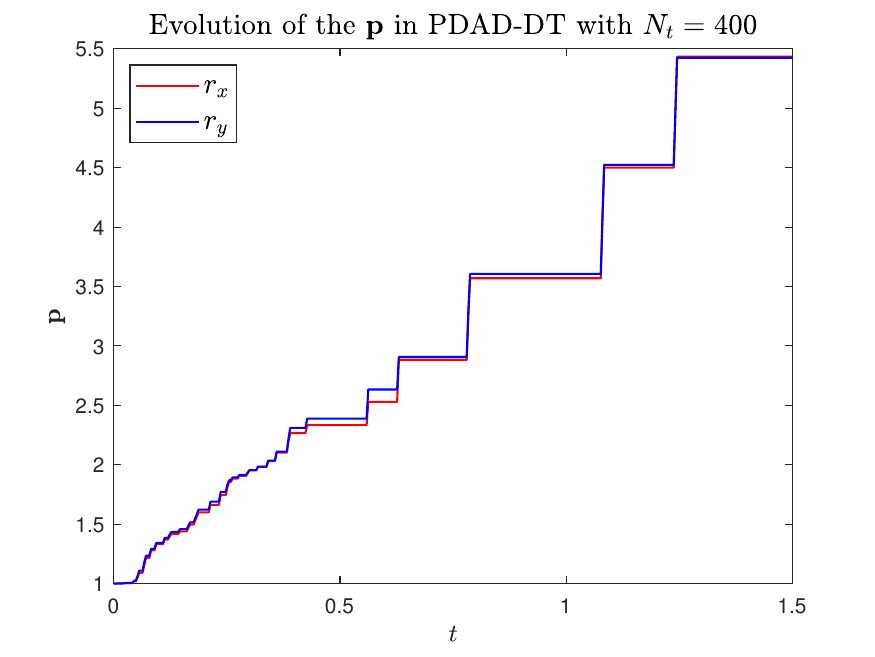}
     \includegraphics[width=0.24\textwidth]{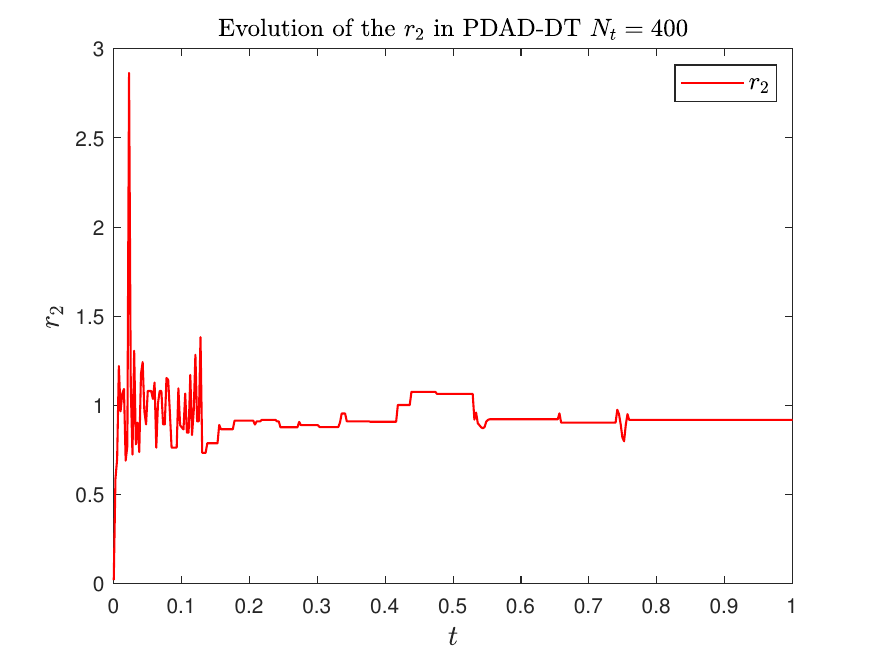}  \\
	\includegraphics[width=0.24\textwidth]{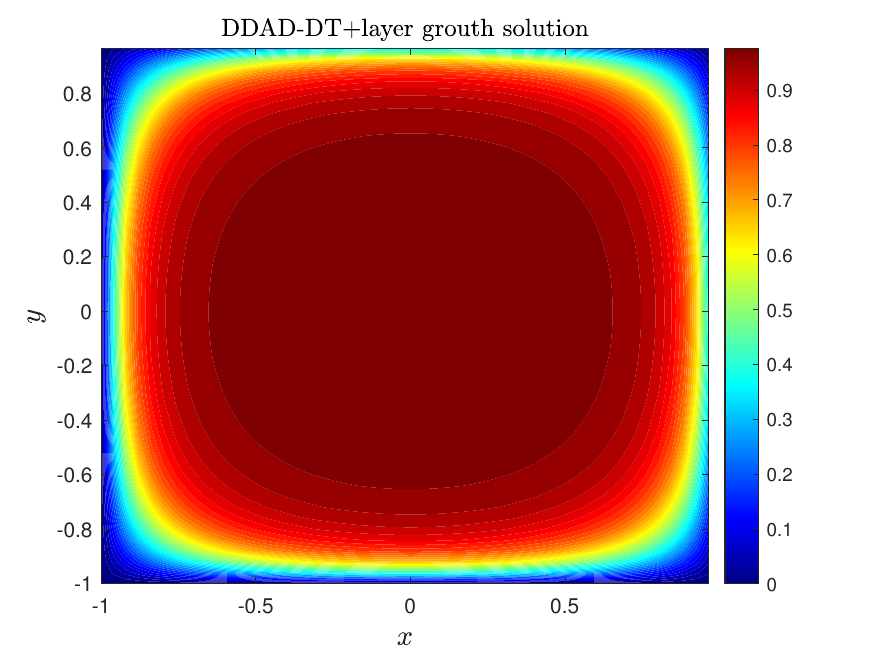}
	\includegraphics[width=0.24\textwidth]{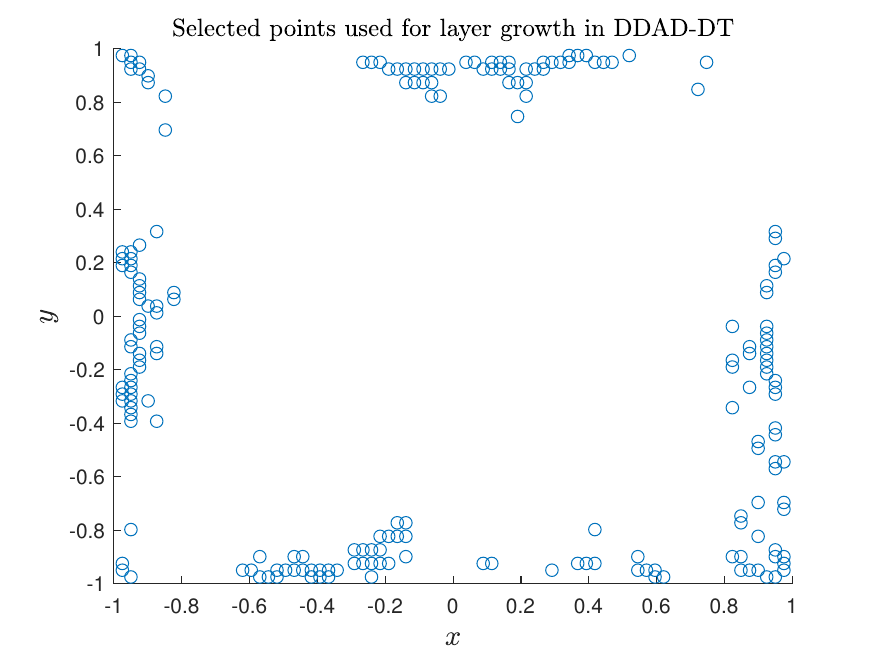}
	\includegraphics[width=0.24\textwidth]{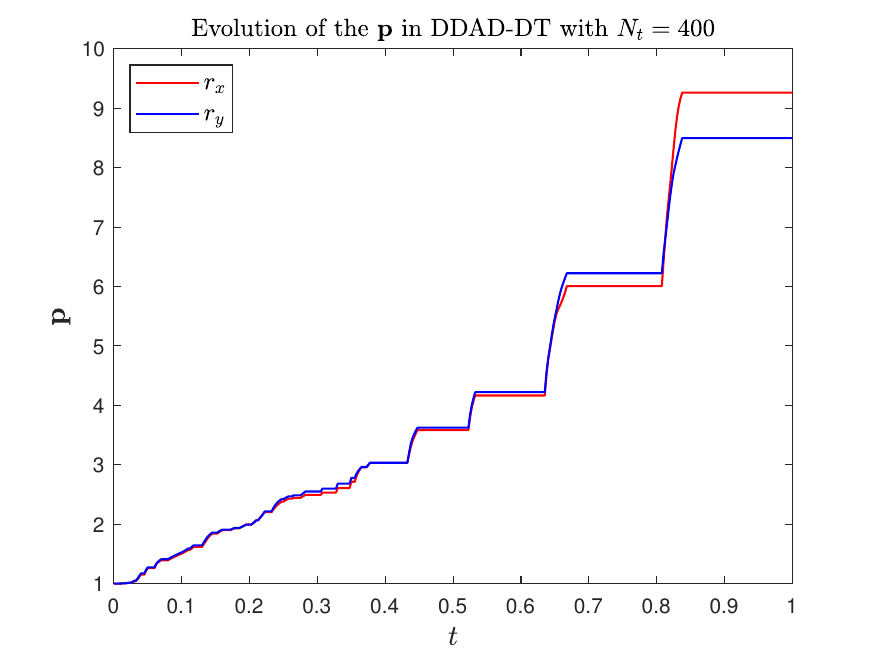}
    \includegraphics[width=0.24\textwidth]{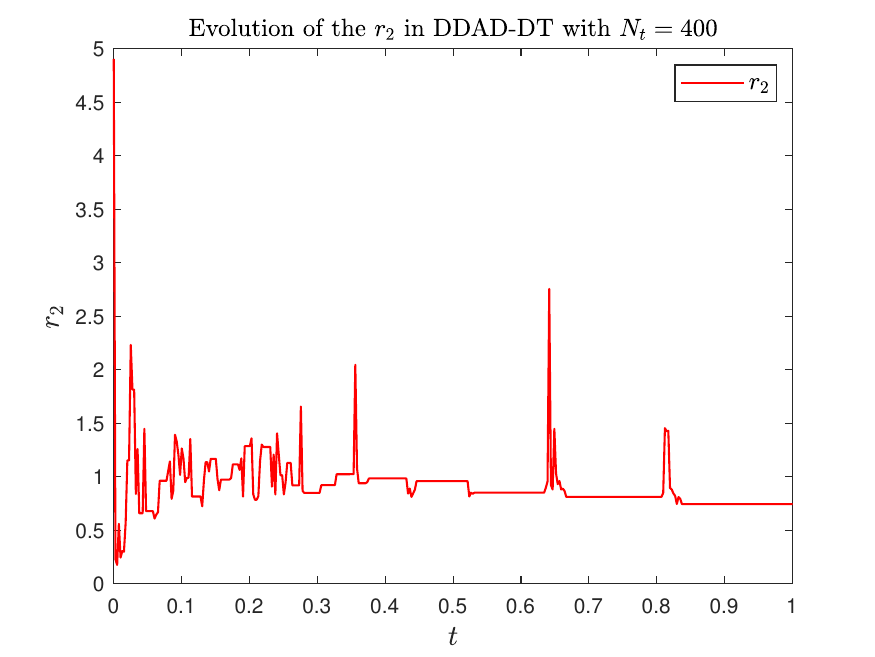}
	\caption{Results for the Allen-Cahn equation~\eqref{eqn:ac2d} obtained by PDAD-DT (top) and DDAD-DT (bottom) with $N_t=400$ at $t=1$.
From left to right, the columns show: the numerical solution, the residual points selected for the second layer, the optimized first-layer parameters $\bm p$, and the second-layer parameter $r_2$.}
	\label{fig.AC2d}
\end{figure}

\subsection{Mean curvature flow}

As a fundamental example of geometric flows, mean curvature flow is a surface-shortening process with important applications in materials science, image processing, and others (\cite{gage1986The,grayson1987The}). Deep learning methods based on the sharp interface model for solving mean curvature flow have practical applications (\cite{li2025Astructurepreserving}).
We consider the following formulation of mean curvature flow:
\begin{equation}\label{eq:MCF}
\partial_t \bm{x} =  \frac{\partial_{\rho\rho} \bm{x}}{|\partial_\rho \bm{x}|^2},
\end{equation}
where $\rho \in(0,1)$. A first-order discrete-time framework is commonly used to solve MCF numerically:
\begin{equation}
 \bm{x}_{i+1}- \frac{\partial_{\rho\rho} \bm{x}_{i+1}}{|\partial_\rho \bm{x}_i|^2}\Delta t =\bm{x}_i,
\end{equation}
\par
 Although a closed curve is considered a manifold without boundary
 in both geometric and topological contexts, periodic boundary conditions must still be applied at the ending
 points during numerical computations. This ensures that the curve remains closed and maintains continuity
 and smoothness at the junction. Periodic boundary conditions can be enforced through both soft and hard constraint approaches. In this work, we adopt the hard constraint approach by constructing a periodic boundary layer that connects the input layer to the first hidden layer of the neural network (\cite{dong2021A}). This architecture ensures that all basis functions inherently satisfy periodicity constraints. As an illustration, consider the following one-dimensional periodic boundary condition:

\begin{equation}\label{eqn:periodic-boundary-conditions}
	\begin{aligned}
		\partial_x^s u(a_1,t)=\partial_x^s u(a_2,t),\qquad s=0,1,\dots,\ t\in(0,T].
	\end{aligned}
\end{equation}
\par
The periodic boundary layer is implemented by applying a nonlinear transformation to the input spatial variable \(x\), mapping it to a set of linear combinations
of periodic functions that satisfy equation \eqref{eqn:periodic-boundary-conditions}. 
 For simplicity, the periodic features in this section are chosen in the form $\sin(2\pi kx+b)$, where the phase shift $b$ is randomly sampled from the uniform distribution $\mathcal{U}(-k\pi, k\pi)$.
\begin{figure}[!ht]
	\centering
	\includegraphics[width=0.5\textwidth]{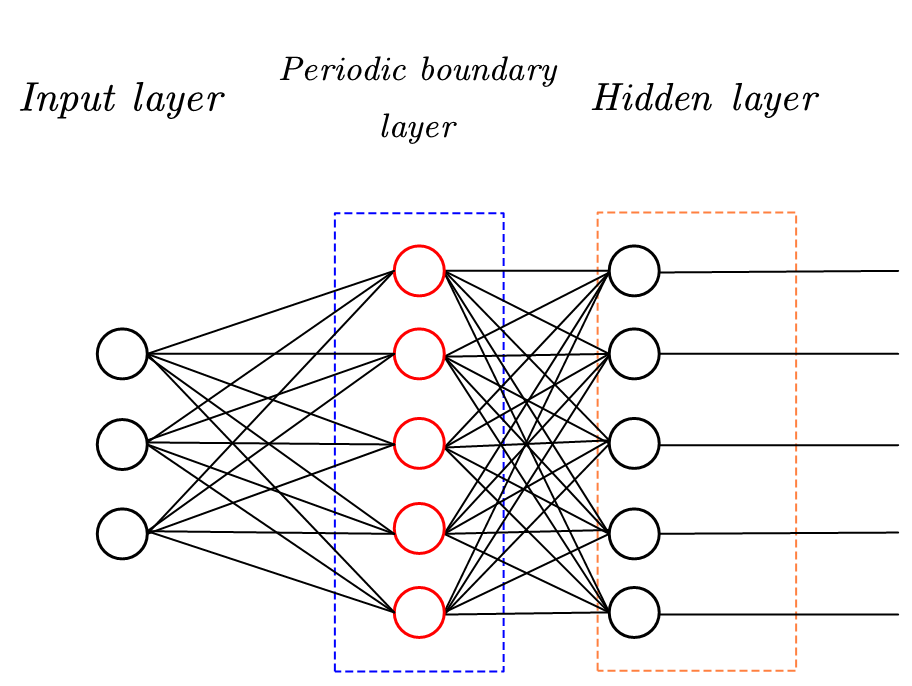}
	\caption{Schematic of the neural network architecture with a hard-constrained periodic boundary layer.}
	\label{fig:PBL}
\end{figure}
\par
In the mean curvature flow (MCF) experiments, we consider the following exact solution:
\begin{equation}\label{eq:mcf_initial_1}
    \bm{x}(\rho,t)=(x(\rho,t),y(\rho,t))=\big(\sqrt{9-2t}\cos{(2\pi \rho)}, \sqrt{9-2t}\sin{(2\pi \rho)}\big).
\end{equation}
\par
Two separate neural networks, each equipped with distinct parameter vectors $\bm{p}$, are employed to approximate $x(\rho,t)$ and $y(\rho,t)$, respectively. For the DDAD-DT method, we utilize $500$ training points, a fixed learning rate of $0.5$, and 15 training iterations. Specifically, $5$ periodic basis functions are implemented to enforce periodic boundary conditions, while the last hidden layer contains 200 neurons. As illustrated in Table \ref{tab:mcf}, DDAD-DT achieves an $\ell_2$ error of $8.65\times 10^{-4}$ with a training time of $8.91$ seconds, and its convergence order is approximately first order.

\begin{table}[!ht]
\centering
\caption{Results of DDAD-DT for mean curvature flow \eqref{eq:MCF} with $m_1=200$.}
    \label{tab:mcf}
\setlength{\tabcolsep}{18pt} 
\begin{tabular}{ccccc}
\hline
& $N_t$ &  time (s) & $\ell_2$ error & order \\ \hline
\multirow{5}{*}{DDAD-DT} & 50 &  0.38 & 1.24e-02& - \\ 
& 100& 1.99 & 6.54e-03 & 0.92\\
& 200 &  2.29& 3.37e-03  &0.96 \\ 
& 400 & 4.85 & 1.71e-03  & 0.98 \\
 & 800 & 8.91 & 8.65e-04  & 0.98 \\ \hline
\end{tabular}
\end{table}

\begin{figure}[!ht]
	\centering
	\includegraphics[width=0.35\textwidth]{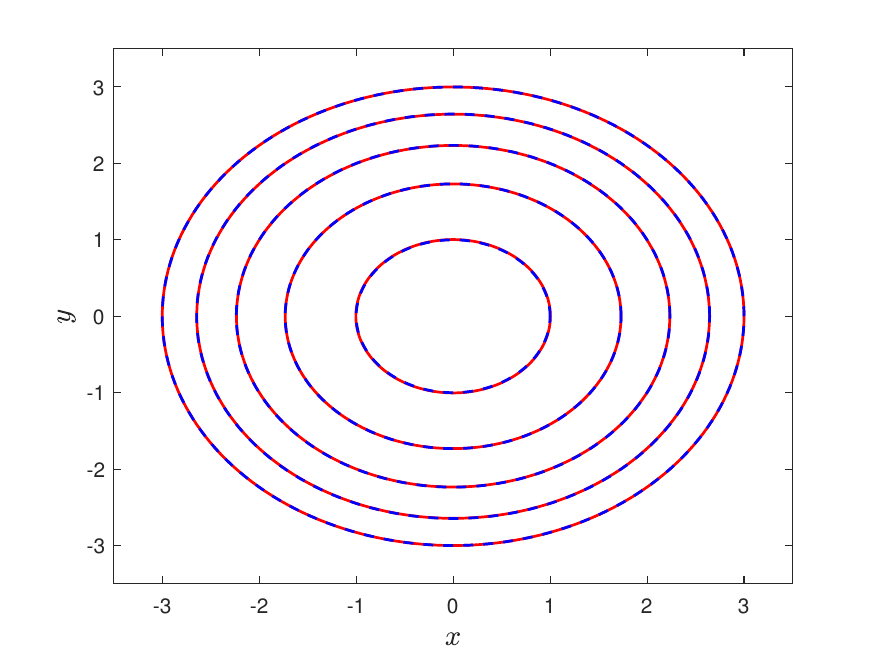}
	\includegraphics[width=0.35\textwidth]{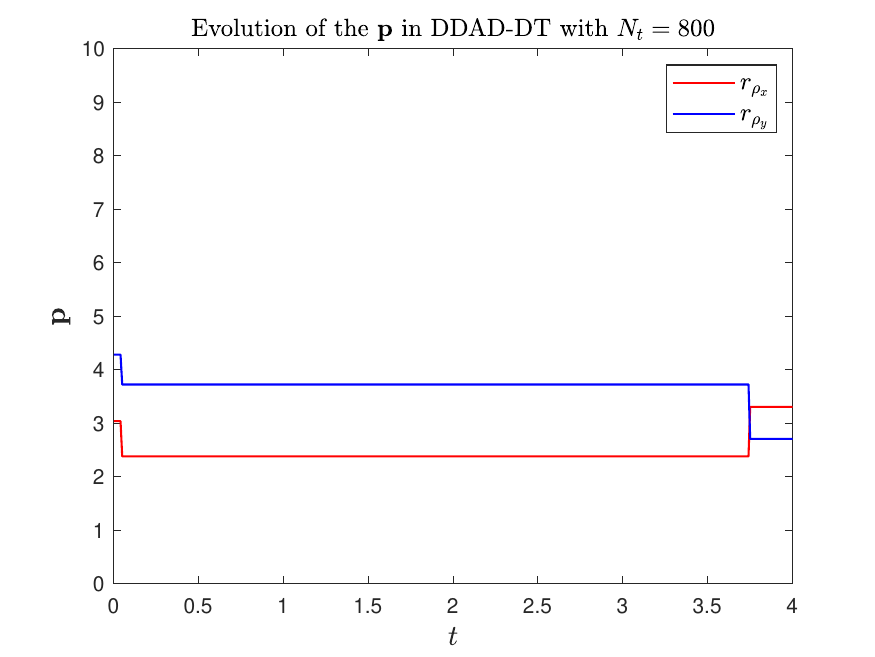}
	
	\caption{Comparison between the reference solution (blue dashed) and the DDAD-DT solution (red solid) for the exact solution~\eqref{eq:mcf_initial_1} with $N_t=800$ at $t=0,1,2,3,4$ (left); evolution of $\bm p=(r_{\rho_x},r_{\rho_y})$ (right).}
	\label{fig:mcf_1}
\end{figure}
\par
To further demonstrate the performance of the proposed method, we consider a more complex initial condition given by \eqref{eq:mcf_initial_2}, and increase the number of periodic basis functions to $20$. The corresponding results are presented in Fig.~\ref{fig:mcf_2}. 
\begin{equation}\label{eq:mcf_initial_2}
    \bm{x}(\rho,0)=(x(\rho,0),y(\rho,0))=\big((2+\cos(12\pi\rho))\cos(2\pi\rho),\ (2+\cos(12\pi\rho))\sin(2\pi\rho)\big).
\end{equation}

\begin{figure}[!ht]
	\centering
	\includegraphics[width=0.35\textwidth]{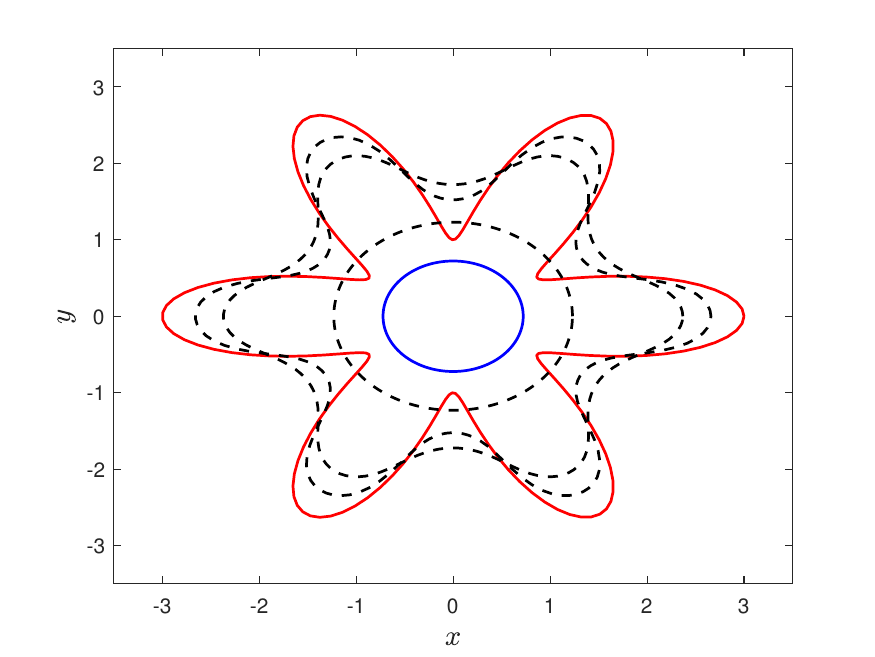}
	\includegraphics[width=0.35\textwidth]{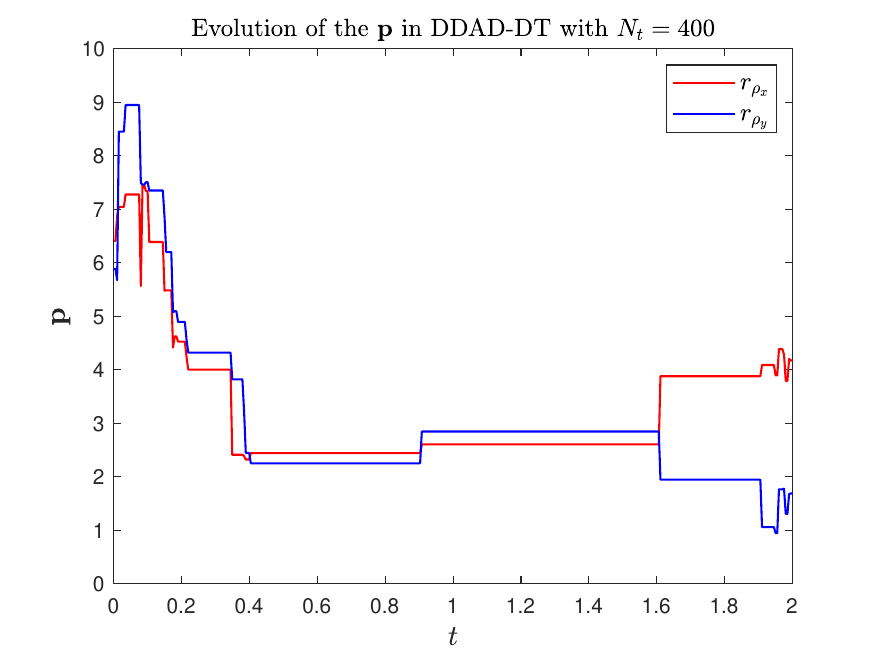}
	
	\caption{DDAD-DT solutions for the initial condition~\eqref{eq:mcf_initial_2} with $N_t=400$ at $t=0,0.1,0.2,1.5,2$ (left); evolution of $\bm p=(r_{\rho_x},r_{\rho_y})$ (right).}
	\label{fig:mcf_2}
\end{figure}

\subsection{High-dimensional Black-Scholes equation}

In this section, we solve the $d$-dimensional Black-Scholes model with uncorrelated noise using the proposed AD-RaNN framework; see \cite{ryck2025Approximation} for related approximation theory and further numerical studies. Let $W(t)=(W_1(t),\dots,W_d(t))$ be a $d$-dimensional standard Brownian motion, $\mu\in\mathbb{R}$ the stock return, and $\sigma_i>0$ the stock volatilities. By It\^o's formula and the Feynman-Kac theorem, for a payoff function $\psi$,
\begin{equation}
u(x,t)=\mathbb{E}\!\left[\psi\!\left(\left[x_i\exp\!\left(\left(\mu-\tfrac12\sigma_i^2\right)t+\sigma_i W_i(t)\right)\right]_{i=1}^d\right)\right],
\end{equation}
which solves the PDE
\begin{equation}\label{eqn:Black-Scholes}
\partial_t u=\frac12\sum_{i=1}^d |\sigma_i x_i|^2\,\partial_{x_ix_i}^2 u+\sum_{i=1}^d \mu x_i\,\partial_{x_i}u,
\end{equation}
with the initial condition $u(x,0)=\psi(x)$ for $x\in\Omega$ and an approximate Dirichlet boundary condition imposed via Monte Carlo:
\begin{equation}
u(x,t)= \frac{1}{N_s}\sum_{n=1}^{N_s}
\psi\!\left(\left[x_i\exp\!\left(\left(\mu-\tfrac12\sigma_i^2\right)t+\sigma_i W_i^{(n)}(t)\right)\right]_{i=1}^d\right),
\quad x\in\partial\Omega,\ t\in(0,T].
\end{equation}
\par
We consider $\Omega=[90,110]^d$, $T=1$, $\mu=-0.05$, and $\sigma_i=\frac{1}{10}+\frac{i}{200}$, $i=1,\dots,d$, with initial condition $\psi(x)=\max(\max_{1\le i\le d}x_i-100,0)$. Following \cite{ryck2025Approximation}, we use $N_s=16384$ Monte Carlo samples for the boundary estimator, and choose $N_{\mathrm{int}}=32768$ interior points, $N_{\mathrm{sb}}=16384$ boundary points, and $N_{\mathrm{tb}}=16384$ initial points. In this experiment, the computational overhead of the solver is evaluated independently of Monte Carlo sampling time, using pre-sampled boundary conditions and reference solutions.
\par
To reduce the computational cost of parameter adaptation, we subsample the collocation set
($3276$ interior points; $1638$ boundary/initial points; $800$ neurons; learning rate $=0.01$) for training stage.
 We share one parameter across all spatial dimensions and use a separate parameter for time, so that only $\bm{p}=[r_s,r_t]$ needs to be optimized, with initialization $\bm{p}_0=(0.1,0.1)$. Related results are reported in Table~\ref{tab:BS}, where AD-RaNN achieves a small relative $\ell_2$ error even for dimension $d=100$, while maintaining a moderate computational cost.

\begin{table}[!ht]
\centering
\caption{Results of the PDAD-ST method for the $d$-dimensional Black-Scholes equation~\eqref{eqn:Black-Scholes}. The table reports the trained parameter $\bm{p}$, the total runtime, and the $\ell_2$ error.}
\label{tab:BS}
\setlength{\tabcolsep}{18pt} 
\begin{tabular}{cccccc}
\hline
& $d$ &$m_1$ &$\bm{p}=(r_s,r_t)$ & time (s) & $\ell_2$ error \\ \hline
\multirow{3}{*}{PDAD-ST} & 20 &  3200 & (0.014,0.221) &28.88& 1.17e-02 \\ 
&50& 3200& (0.023,0.215) & 45.62&2.16e-02 \\
& 100& 3200 & (0.025,0.236)& 64.05&2.33e-02 \\ 
 \hline
\end{tabular}
\end{table}

\subsection{Diffusion-Reaction Dynamics: AD-RaNN-DeepONet Model}\label{subsec:DR_ADRaNNDeepONet}
We consider the nonlinear diffusion-reaction equation with a spatial source term, which is subject to homogeneous initial and boundary conditions
\begin{equation}\label{eqn:DR-AD-RaNN-DeepONet}
\begin{cases}
u_t(x,t)-D\,u_{xx}(x,t)-k\,u^2(x,t)=a(x),& \qquad (x,t)\in(0,1)\times(0,1],\\[2pt]
u(0,t)=u(1,t)=0,
&    \qquad  t\in(0,1],\\[2pt]
u(x,0)=0,
&   \qquad   x\in(0,1),
\end{cases}
\end{equation}
where the diffusion coefficient and reaction rate are set to $D=0.01$ and $k=0.01$.
The goal is to learn the solution operator
\begin{equation}\label{eq:DR_operator}
\mathcal{G}: a(\cdot)\mapsto u(\cdot,\cdot),\qquad 
u(x,t)=\mathcal{G}(a)(x,t).
\end{equation}
\begin{itemize}
    \item \textbf{Data generation:}
Following~\cite{jiang2026deeponet}, the training dataset consists of $N=10{,}000$ random realizations of the input function $a(x)$,
where each $a$ is sampled from a Gaussian random field with length scale $l=0.2$.
For each realization, a reference solution $u(x,t)$ is generated on a $100\times100$ mesh.
The test set contains another $1000$ realizations evaluated on the same $100\times100$ uniform grid.

\item \textbf{Hard enforcement of boundary and initial conditions:}
To satisfy the homogeneous Dirichlet boundary conditions and the zero initial condition in \eqref{eqn:DR-AD-RaNN-DeepONet} exactly, we adopt a hard-constraint formulation
and represent the operator output as
\begin{equation}\label{eq:DR_hbc}
\mathcal{G}(a)(x,t)=c(x,t)\,\widetilde{\mathcal{G}}(a)(x,t),
\qquad c(x,t)=t\,x(1-x),
\end{equation}
so that $\mathcal{G}(a)(x,t)$ automatically vanishes on $t=0$ and $x\in\{0,1\}$.
\end{itemize}
\par
To improve accuracy with minimal optimization cost, the trunk and branch first-layer weights are sampled as
\begin{equation}\label{eq:DR_param_dist}
W_b^{1}\sim U(-r_b,r_b),\qquad W_t^{1}\sim U(-(r_x,r_t),(r_x,r_t)),
\end{equation}
and we collect the distribution parameters as $\bm p=(r_b,r_x,r_t)$.
The AD-RaNN-DeepONet model uses $m_b=120$, $m_t=100$, and $40{,}000$ randomly selected solving pairs $(\bm a,\bm z)$. 
To reduce the training cost during parameter adaptation, we use a smaller configuration with $m_b=60$, $m_t=50$, and $20{,}000$ training pairs. 
The learning rates are set to $0.02$ (branch) and $0.2$ (trunk), and the number of training steps is $80$.
As reported in Table~\ref{tab:DR-AD-RaNN-DeepONet}, compared with the vanilla DeepONet and PI-DeepONet (\cite{wang2021Learning}), AD-RaNN-DeepONet achieves a (relative) $\ell_2$ error of $1.34\times 10^{-3}$ with a total runtime of $87.15$ seconds.
The Vanilla DeepONet and PI-DeepONet baselines are evaluated on the same diffusion-reaction dataset and on the same workstation as our AD-RaNN-DeepONet experiments.
These two baselines are implemented in Python using PyTorch.

\begin{table}[!ht]
\centering
 \caption{Comparison of operator-learning models for the diffusion-reaction problem~\eqref{eqn:DR-AD-RaNN-DeepONet} with $m_b=120$ and $m_t=100$. The table reports the total number of trainable parameters, runtime, relative $\ell_2$ error, and the learned distribution parameter $\bm{p}$ when applicable.}
\label{tab:DR-AD-RaNN-DeepONet}
\setlength{\tabcolsep}{12pt} 
\begin{tabular}{ccccc}
\hline
 model & 
parameters  & time (s) & $\ell_2$ error& $\bm{p}=(r_b,r_x,r_t)$ \\ \hline
  Vanilla DeepONet &  25,600  &2283.35& 2.17e-02 &-\\ 
PI-DeepONet & 25,600 & 4961.64 &3.81e-03 &-\\
 AD-RaNN-DeepONet& 12,000& 87.15&1.34e-03 &(0.02,4.69,1.41)\\
 AD-PI-RaNN-DeepONet& 12,000& 571.76&4.26e-03 &(0.04,3.94,1.52)\\ 
 \hline
\end{tabular}
\end{table}

\begin{figure}[!ht]
	\centering
\includegraphics[width=0.32\textwidth]{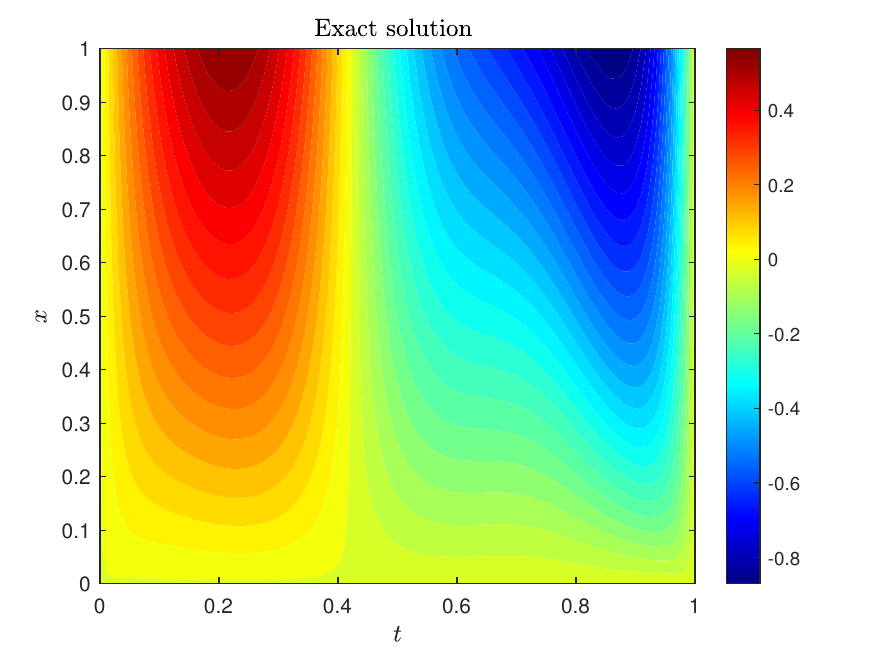}
\includegraphics[width=0.32\textwidth]{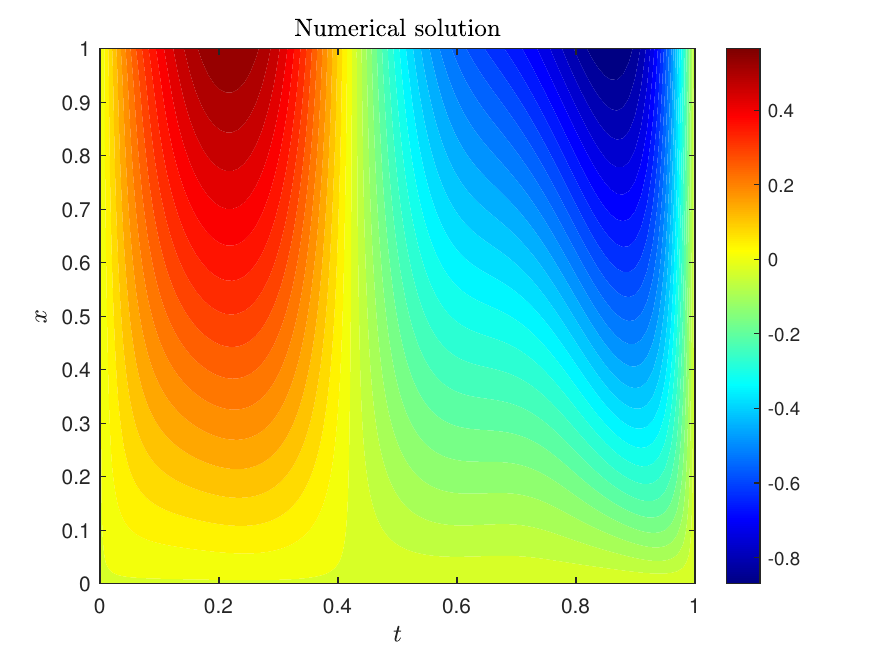}
\includegraphics[width=0.32\textwidth]{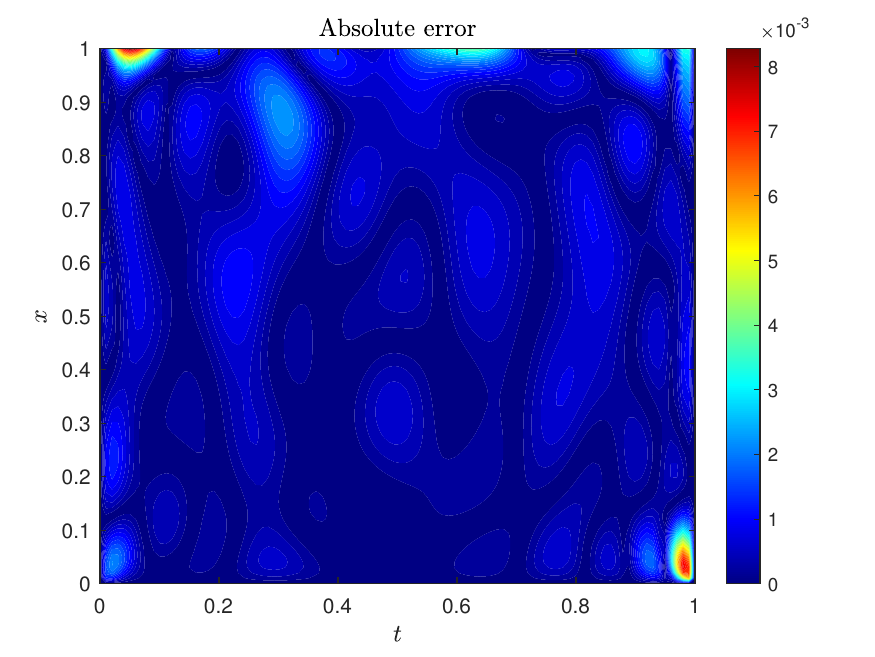}
	
	\caption{ Comparison of exact solutions (left), predictions (middle), and absolute errors (right) for equation~\eqref{eqn:DR-AD-RaNN-DeepONet} solved by AD-RaNN-DeepONet.
}
	\label{fig:DR-AD-RaNN-DeepONet}
\end{figure}

\section{Conclusion}\label{sec:conclusion}
RaNN-based PDE solvers are appealing because they reduce neural PDE approximation to a least-squares problem on a randomized hidden representation. Their central limitation, however, is the strong sensitivity of solution quality to the sampling distribution of the hidden-layer parameters. In this work, we addressed this issue by introducing Adaptive-Distribution Randomized Neural Networks (AD-RaNN), in which randomized feature generation is elevated from a fixed heuristic choice to a low-dimensional adaptive optimization problem. This distribution-level viewpoint provides a structural middle ground between fully fixed random features and fully trained hidden representations, while preserving the computational advantages of least-squares-based randomized solvers.

Viewed from a broader methodological perspective, the main outcome of this paper is a reusable adaptive distribution learning principle for randomized hidden representations, rather than only a collection of problem-specific RaNN variants.
 Within this framework, we developed the PDE-driven and data-driven mechanisms PDAD and DDAD, deployed them in unified space-time and discrete-time PDE solvers, incorporated a local layer-growth enhancement for unresolved localized structures, and extended the same principle to randomized operator learning through AD-RaNN-DeepONet. Across a diverse set of benchmarks, the numerical results show that the proposed framework can substantially reduce sensitivity to manually chosen feature distributions and achieve strong empirical accuracy across a broad range of PDE regimes.

Future work includes several directions. First, more advanced time integrators and stabilization techniques, such as structure-preserving or SAV-type schemes, will be investigated to further improve long-time accuracy and robustness, particularly for stiff or multiscale dynamics. Second, extending the framework with parallel computing and domain decomposition may enable larger-scale and multi-physics applications, such as solid-state dewetting and other complex interface-evolution problems.

\bibliographystyle{plain}

\bibliography{references}

\end{document}